\documentclass[reqno]{amsart}
\usepackage{amsmath,amssymb,cite}
\usepackage[mathscr]{euscript}
\usepackage{mathtools}
\mathtoolsset{showonlyrefs}
\usepackage{xcolor}
\usepackage{braket}

\usepackage{amsmath,stackrel}
\usepackage{tikz}
\usetikzlibrary{shapes.geometric, arrows}
\tikzstyle{startstop} = [rectangle, rounded corners, minimum width=0.5cm, minimum height=0.5cm,text centered, draw=black]
\tikzstyle{io} = [rectangle, minimum width=0.5cm, minimum height=0.5cm, text centered, draw=black]
\tikzstyle{decision} = [diamond, minimum width=0.5cm, minimum height=0.5cm, text centered, draw=black]
\tikzstyle{arrow} = [thick,->,>=stealth]
\usepackage{ulem}

\definecolor{bblue}{rgb}{.2,0.2,.8}

\theoremstyle{plain}
\newtheorem{theorem}{Theorem}[section]
\newtheorem{proposition}[theorem]{Proposition}
\newtheorem{lemma}[theorem]{Lemma}

\theoremstyle{definition}

\theoremstyle{remark}
\newtheorem{remark}[theorem]{Remark}



\numberwithin{equation}{section}
\numberwithin{theorem}{section}

\def\be{\begin{equation}}
	\def\ee{\end{equation}}
\def\bp{\begin{pmatrix}}
	\def\ep{\end{pmatrix}}
\def\bea{\begin{eqnarray}}
	\def\eea{\end{eqnarray}}

\def\\{\par\medskip}

\let\0=\noindent


\newcommand{\abscont}{%
	\mathrel{<\mkern-15mu\raisebox{0.29ex}{$\scriptscriptstyle<$}}}


\newcommand{\id}{{1 \mskip -5mu {\rm I}}}
\renewcommand{\epsilon}{\varepsilon}
\renewcommand{\hat}{\widehat}


\begin{document}
	
	\title[LD and MPA]{Large deviations and the matrix product ansatz}

	\author{Davide Gabrielli}
	\address{\noindent Davide Gabrielli \hfill\break\indent 
		DISIM, Universit\`a dell'Aquila
		\hfill\break\indent 
		67100 Coppito, L'Aquila, Italy
	}
	\email{davide.gabrielli@univaq.it}

	\author{Federica Iacovissi}
	\address{\noindent Federica Iacovissi \hfill\break\indent 
		DISIM, Universit\`a dell'Aquila
		\hfill\break\indent 
		67100 Coppito, L'Aquila, Italy
	}
	\email{federica.iacovissi@graduate.univaq.it}

	\begin{abstract}
		We consider probability measures on $A^N$, the set of sequences of symbols on a finite alphabet $A$ of length $N$, that give a weight to each sequence in terms of a collection of matrices with non-negative entries and having rows and columns labeled by a finite or countable set $B$. We prove for such kind of measures large deviations principles for several empirical measures. Our approach is based on a simultaneous combination of an enlargement of the state space to sequences on $A\times B$ and a spectral conjugation that produces a stochastic matrix, as discussed in \cite{GI1}. As a result we describe the measures as hidden Markov measures and can deduce the large deviations results by contraction from the corresponding ones for the enlarged Markov chain. The measure on the enlarged state space is a Markov bridge. The invariant measures of several non equilibrium models of interacting particle systems can be represented by the so called {\it Matrix Product Ansatz} that corresponds to measures of the type that we consider and with matrices labeled by $B$ that is typically countable infinite. The large deviations behavior is different in the cases with $B$ finite or countable. In the finite case we give a variational formula for both the algebraic and the spatial empirical measures, that can be solved in special cases. For the infinite case, we illustrate the method through an example that is the invariant measure of the boundary driven TASEP model in a special regime. We recover in this way the celebrated results in \cite{Der4,Derr7}, and in particular we obtain a variational representation of the rate function similar to that in \cite{Bryc}. Our approach is general and can in principle be applied to any measure represented by the matrix product ansatz with matrices having positive entries.
	\end{abstract}
	
	\noindent
	\keywords{Large deviations}
	
	\subjclass[2010]
	{Primary 
		60K35, 
		60F10; 
		Secondary 
		82C22, 
		82C70. 
	}
	
	\maketitle
	\thispagestyle{empty}
	
	\section{Introduction}
	
	We consider a class of probability measures that are called {\it Rational Models} in the area of theoretical informatics, in particular for the
	analysis of pattern statistics (see, for example, \cite{B,G2} and references therein). A rational model is a probability measure on symbolic sequences defined by using a collection of matrices with non-negative elements and two vectors with non-negative coordinates. An informal definition of the measure is the following. Consider a finite alphabet and associate a matrix with each element. To a symbolic sequence we associate the matrix obtained by the row by column product of all the matrices associated with the symbols, in the order that they appear. Finally, one computes the bilinear form of this matrix between the two fixed vectors. The resulting positive number is the weight associated with the symbolic sequence. Normalizing to one we obtain a probability measure that we call a rational model.
	
	\smallskip
	
	We prove some large deviations principles for rational models. In particular, we will introduce several notions of {\it Empirical Measures} both algebraically, defined in terms of frequencies, and spatially defined by considering the sequence rescaled and embedded on a unit interval. This second definition is natural considering the symbolic sequence as a configuration of particles and having in mind a scaling limit. The phenomenology of this class of models can be very different depending on several features of the matrices and of the vectors. We will concentrate on the major distinction when the matrices are finite or infinite. For the finite case we will just consider the simpler case of irreducible and aperiodic matrices, while for the infinite case we will consider a remarkable example.
	
	\smallskip
	We will see that, while in the finite case (at least under the assumptions on the matrices) we have a general compact variational representation for the rate functional of both algebraic and spatial empirical measures, in the infinite case we may have different behaviors. This is the counterpart of the fact that irreducible finite Markov chains are recurrent while in the infinite case different behaviors may occur. The specific infinite example that we discuss has not a joint large deviations principle for the algebraic empirical measures while that for the spatial empirical measure has a non local structure.
	
	\smallskip
	
	In non-equilibrium statistical mechanics a very important role is played by the {\it Matrix Product Ansatz} (see \cite{reviewhold,review} for some reviews) that allows to give combinatorial representations of stationary non-equilibrium states. The matrix product ansatz represents the invariant measure of some non reversible particle systems as a rational model, with the matrices and the vectors satisfying suitable algebraic relations. Typically, there are several representations of matrices satisfying such relations, which are infinite dimensional and often have positive entries, so that the invariant measure is exactly a rational model according to our definition. We will consider a special case of the boundary driven TASEP recovering with a different approach the large deviations results in \cite{Der5,Derr7,Bryc}, but our approach can be applied in principle to any probability measure represented by the matrix product ansatz.
	
	\smallskip
	
	The strategy of our proof is a general construction showing that any rational model can be represented as an {\it Hidden Markov Model} on a suitable enlarged state space \cite{GI1}. When the matrices are finite, the enlarged state space is finite while it is infinite when the matrices are infinite. The enlarged state space is obtained by considering the original sequence together with the labels of rows and columns of the matrices. On this enlarged state space, the measure is obtained by a product of elements of one single matrix with blocks given by the original collection of matrices. We can now transform this single matrix into a stochastic one by a conjugation operation using the Perron eigenvalues and eigenvectors. This operation is a generalized Doob transform and it is related to the Hammersley-Clifford theorem of equivalence between Gibbs and Markov measures. The enlarged probability measure is therefore naturally described in terms of a Markov bridge.  We point out that the rate functionals obtained by this procedure will always have a variational representation as an infimum over rate functionals for enlarged Markov measures.
	
	\smallskip
	In \cite{GI1} an additional structure for rational models is enlightened, namely that measures of this kind can be represented as mixtures of inhomogeneous product measures with the parameters of the product measures distributed according to a Markov bridge. This fact gives an additional structure to the rate functionals that can indeed be represented by variational problems over a double entropy functional. This different and simpler representation will be discussed in \cite{GGIM}. This relates the matrix product ansatz to the mixture representations of
	some stationary non equilibrium states \cite{CFGGT, dMFG, CFF, GRV, RV} 
	\smallskip
	
	The paper is organized as follows.
	
	In Section 2 we introduce notation, define the empirical measures, give a formal definition of the rational models, explain the matrix product ansatz with an emphasis on the boundary driven TASEP and finally illustrate the enlarging construction that is at the core of our arguments; the basic construction is explained in \cite{GI1} and we here discuss some details that were postponed there.
	
	In Section 3 we prove a general large deviations Theorem for the finite case with a variational representation of the  rate functional for both the algebraic and the spatial empirical measures.
	We also illustrate some special cases when the variational formula can be made explicit. A different representation will be discussed in \cite{GGIM}.
	
	In Section 4 we apply the ideas developed in the finite setting to the boundary driven TASEP reobtaining the results in \cite{Der5,Derr7} and in particular getting a representation of the rate functional as an infimum similar to that in \cite{Bryc}.
	
	In Appendix we collect some useful classic results and some auxiliary Lemmas.

	\section{The framework}\label{sec:frame}
	
	\subsection{Notation}
	
	We consider $A,B$ finite or countable sets and introduce notation using mainly $A$ but the same notation holds also for $B$ and other sets. We define $A^*=\cup_{N=1}^{+\infty}A^N$ the set of non empty finite words in the alphabet $A$.
	If $\eta\in A^*$ is a word with symbols written in the alphabet $A$, with $\eta\in A^N$, we write $|\eta|=N$ and use also the extended notation $\eta=(\eta_1,\dots ,\eta_N)$, so that $\eta_i\in A$ is the letter at position $i$ for the word $\eta$. For integers $n<m\leq |\eta|$ we denote by $\eta_n^m:=(\eta_n,\eta_{n+1},\dots \eta_m)$ the finite portion of the string $\eta$ between the indices $n$ and $m$. In some constructions, terms of the form $\eta_i$ with $i>|\eta|$ may appear; in this case we mean that the index of the position of the letter in the string is taken modulus $|\eta|$, i.e. $\eta_{i+N}=\eta_i$ for any $i\in \mathbb Z$ when $|\eta|=N$. Given $\eta,\xi\in A^*$, we denote by $\eta\xi\in A^*$ the concatenation of the two words. 
	\smallskip

	Consider $M$ a $B \times B$ square matrix. We say that $M$ is non-negative if $M_{b,b'}\geq 0$ for any $b,b'\in B$.
	We associate a directed graph $(B,E)$ to the matrix $M$ considering $(b,b')\in E$ if and only if $M_{b,b'}>0$.
	The matrix $M$ is called irreducible if $(B,E)$ is strongly connected. Given a state $b\in B$ we call the period of $b$ as the greatest common divisor of the set $\{k\in \mathbb N : M^k_{b,b}>0\}$. For irreducible matrices the period of each state is the same. We call {\it aperiodic} a matrix having the period of each state equal to 1.
	
	\smallskip
	
	We call respectively $\mathcal M^1(S)$ and $\mathcal M^+(S)$ the set of probability measures and positive measures on a given set $S$ endowed with the topology induced by the weak convergence, i.e. $\mu_n\to\mu$ when $\int fd\mu_n\to \int fd\mu$ for any continuous and bounded $f:S\to \mathbb R$. 
	
	Given $\nu\in \mathcal M^1(A^k)$ we say that the measure satisfies a finite stationary condition if
	\begin{equation}\label{staz-fin}
		\sum_{a\in A}\nu(\eta a)=\sum_{a\in A}\nu (a\eta )\,,\qquad \forall \eta\in A^{k-1}\,,
	\end{equation}
	and we call $\mathcal M^1_{st}(A^k)$ the set of such probability measures. 
	
	\smallskip

	Consider the real interval $[0,1]\subseteq \mathbb R$.
	We call a function $\pi:[0,1]\to \mathbb R$ absolutely continuous if there exists a function $\dot\pi\in L^1([0,1])$
	such that for almost all $x\in[0,1]$ we have $\pi(x)=\int_0^x\dot\pi(y)dy$. We call $\mathcal{AC}([0,1])$ the set of absolutely continuous functions. 
	
	\subsection{Empirical measures}
	
	\subsubsection{Algebraic empirical measures}
	The set $A$ will always be finite while $B$ may be either finite or countably infinite.
	We assume, for simplicity, that in the finite case $A:=\left\{0,1, \dots \ |A|-1\right\}$ and $B:=\left\{0,1,\dots ,|B|-1\right\}$ while instead in the infinite countable case we have $B=\mathbb N\cup \{0\}=:\mathbb N_0$.
	
	Given $\eta\in A^*$ we define its {\it algebraic empirical measure} of order one as
	\begin{equation}
		\label{em1}
		\hat\nu^{1}=\hat\nu^{1}[\eta]:=\frac{1}{|\eta|} \sum_{i=1}^{|\eta|}\delta_{\eta_i}\in \mathcal M^1(A)\,,
	\end{equation}
	where  $\delta_a$ is the delta measure at $a\in A$. When $\eta\in A^N$ that is $|\eta|=N$ we call $\hat\nu^{1}_N$ the empirical measure \eqref{em1}. The empirical measures are constructed starting from a sample, that is $\eta$ in this case, and later on it will be a pair $(\eta,\zeta)$; we denote such dependence on the sample using squared parenthesis.
	
	For any positive integer $k$ we define the {\it algebraic empirical measure of order k} as
	\begin{equation}
		\label{emk}
		\hat\nu^{k}=\hat\nu^{k}[\eta]:=\frac{1}{|\eta|} \sum_{i=1}^{|\eta|}\delta_{\eta_i^{i+k-1}}\in \mathcal M^1_{st}(A^k)\,,
	\end{equation}
	that, by definition, satisfies the finite version of the stationary condition \eqref{staz-fin}; again when $|\eta|=N$ we use the notation $\hat\nu^{k}_N$ for \eqref{emk}.
	
	For any fixed $\eta$, we have that $\left(\hat \nu^{k}\right)_{k\in \mathbb N}$ is a sequence of compatible probability measures, i.e.
	\begin{equation}
		\sum_{a\in A}\hat\nu^{k+1}(\xi a)=\hat\nu^{k}(\xi)\,, \qquad \forall \xi\in A^k\,,\ \forall k\geq 1\,,
	\end{equation}
	and, by Kolmogorov theorem, are therefore the marginals of a shift invariant measure on $\mathcal M^1(A^{\mathbb N})$ called the stationary process (see for example \cite{denH} for details and notice that you do not need to have $k<|\eta|$).

	\subsubsection{Spatial empirical measures}
	In this section we introduce some natural empirical measures having a spatial structure. Consider the interval $[0,1]$ as reference geometric space. Given $\eta\in A^*$, we define the corresponding {\it spatial empirical measure} as
	\begin{equation}\label{ems}
		\hat\pi=\hat\pi[\eta]:=\frac{1}{|\eta|}\sum_{i=1}^{|\eta|}\eta_i\delta_{i/|\eta|}\,.
	\end{equation}
	We have that $\hat\pi\in \mathcal M^+([0,1])$ is a positive measure on the interval $[0,1]$. When $|\eta|=N$
	we call $\hat\pi_N$ the spatial empirical measure \eqref{ems}. Equivalently, we can define the empirical measure by its action on continuous functions $f:[0,1]\to \mathbb R$
	\begin{equation}
		\int_{[0,1]}f\,d \hat\pi_N=\frac 1N\sum_{i=1}^N\eta_if(i/N)\,.
	\end{equation}
	
	We introduce also a {\it generalized spatial empirical measure} defined by
	\begin{equation}\label{gesm}
		\hat \Pi^k=\hat\Pi^k[\eta]:=\frac{1}{|\eta|}\sum_{i=1}^{|\eta|}\delta_{\eta_i^{i+k-1}}\delta_{i/|\eta|}(dx)\,,\qquad k=1,2,\dots
	\end{equation}
	where we added the symbol $dx$ to distinguish $\delta_y(dx)$, the spatial delta measure at $y\in[0,1]$, from $\delta_{\eta}$, the element of $\mathcal M^1(A^k)$ concentrated on $\eta$ such that $|\eta|=k$. We have that $\hat\Pi^k\in \mathcal M([0,1]; \mathcal M^+(A^k))$, i.e. it is a measure valued measure on the interval $[0,1]$, that can be equivalently characterized by its action on continuous vector fields. Consider $\mathfrak f=(\mathfrak f_\zeta)_{\zeta\in A^k}$ where for any $\zeta$ we have that $\mathfrak f_\zeta:[0,1]\to \mathbb R$ is a continuous function. We have then
	$$
	\hat\Pi^k[\eta](\mathfrak f):=\frac{1}{|\eta|}\sum_{i=1}^{|\eta|}\mathfrak f_{\eta_i^{i+k-1}}(i/|\eta|)\,.
	$$
	Given a measurable $S\subset [0,1]$ and $\eta\in A^k$ we have that 
	$$\hat\Pi^k(S)=\frac{1}{|\eta|}\sum_{\left\{i:i/|\eta|\in S\right\}}\delta_{\eta_i^{i+k-1}}\in \mathcal M^+(A^k)$$ and $\hat \Pi^k(S;\zeta)\in \mathbb R_+$ is the weight associated with $\zeta\in A^k$ by $\hat\Pi^k(S)\in \mathcal M^+(A^k)$. Some simple relations are $\hat \Pi^k([0,1])=\hat\nu^k$ and $\hat\pi(S)=\sum_{a\in A} \hat\Pi^1(S;a)a$.
	
	The topology considered for the spatial empirical measures is always that induced by the weak convergence. Consider a measure $\Pi^k\in \mathcal M([0,1]; \mathcal M^+(A^k))$; when $\Pi^k$ is absolutely continuous with respect to Lebesgue measure we use the same symbol for the measure and its density, i.e. $\Pi^k=\Pi^k(x)dx$, and we have $\Pi^k(S)=\int_S\Pi^k(x)dx$. For almost any $x\in[0,1]$, we have that the density $\Pi^k(x)\in \mathcal M^+(A^k)$.
	
	\subsection{Rational models}
	
	We have for each $a\in A$ a non-negative $|B|\times|B|$ matrix $M^{(a)}$ whose rows and columns are labeled by the elements of $B$. We recall also that the term non-negative here refers to the fact that all the elements $M^{(a)}_{b,b'}$ are non negative. We call $M:=\sum_{a\in A} M^{(a)}$. 
	
	To denote vectors we use the Bra-Ket formalism of quantum mechanics since this is the one traditionally used for the matrix product ansatz.
	We denote by Ket vectors $\ket x, \ket y$ the column vectors
	$$
	\ket x=\begin{bmatrix} x(1) \\ x(2) \\ \vdots \\ x(|B|)\,,
	\end{bmatrix}, \qquad \ket y=\begin{bmatrix} y(1) \\ y(2) \\ \vdots \\ y(|B|)\ \\ 
	\end{bmatrix}.
	$$ 
	We will consider only vectors with positive coordinates and will also use the shorthand $\ket x=\left(x(b)\right)_{b\in B}$, $\ket y=\left(y(b)\right)_{b\in B}$.  We call $\bra x,\bra y$ the corresponding row vectors obtained by transposition. We do not consider complex conjugation since we restrict to the cases of vectors with real and positive elements. In the case of infinite alphabets we will consider suitable summability conditions so that the model is well defined; we will discuss such cases afterwards. 
	
	We define a probability measure $\mu_N$ on $A^N$ by
	\begin{equation}\label{inv-rational}
		\mu_N( \eta):=\frac{\bra y\prod_{i=1}^NM^{(\eta_i)}\ket x}{Z_N}\,,\qquad \eta\in A^N\,,
	\end{equation}
	where $Z_N$ is the normalization factor
	\begin{equation}
		Z_N:=\bra y  M^N \ket x\,.
	\end{equation}
	Note that the measure and the  normalization factor depend on $\ket x,\ket y$ and $(M^{(a)})_{a\in A}$. In cases where we need to underline the dependence on such factors, we will make the dependence explicit, for example by writing $\mu_N=\mu_N^{x,y}$.
	
	\subsection{Enlarging the state space}
	Here we illustrate the construction in \cite{GI1} in order to understand the statistics and the large deviations of the measures $\mu_N$; in the next section we will apply it to some specific cases. We define a coupling  measure $\mathcal C_N$ on $A^N\times {B}^{N+1}$ (i.e. $\mathcal C_N\in \mathcal M^1(A^N\times{B}^{N+1})$) defined as follows. Consider $\eta=(\eta_1,\dots ,\eta_N)\in A^N$ and $\zeta=(\zeta_1,\dots,\zeta_{N+1})\in {B}^{N+1}$. We define 
	\begin{equation}\label{ingrassata}
		\mathcal C_N(\eta,\zeta):=\frac{y(\zeta_1)\left(\prod_{i=1}^NM^{(\eta_i)}_{\zeta_i,\zeta_{i+1}}\right)x(\zeta_{N+1})}{Z_N}\,.
	\end{equation}
	Multiplying by $|A|$ the normalization constant, the above measure can also be thought of as a probability measure on $A^{N+1}\times {B}^{N+1}$ whose value does not depend on $\eta_{N+1}$. 
	By construction we have
	\begin{equation}
		\sum_{\zeta\in {B}^{N+1} }\mathcal C_N(\eta,\zeta)=\mu_N(\eta)\,.
	\end{equation}
	It turns out that a very natural approach to compute large deviations rate functionals for the measures $(\mu_N)_{N\in \mathbb N}$ is to prove large deviations principles for the measures $(\mathcal C_N)_{N\in \mathbb N}$ and then apply the contraction principle.

	\subsection{The matrix product anstatz}\label{sec:MPA}
	
	The Matrix Product Ansatz (MPA) is a remarkable algebraic method to get a combinatorial representation of the invariant measure of a variety of non equilibrium stochastic particle systems; it has a long history starting from \cite{Der1,Der2,Der3,Schutz}, with a large number of applications to many different models (see for example \cite{reviewhold,review} for some reviews and \cite{NS} for connections to combinatorics and polymer models). The power of the method is shown, for example, by the fact that it allows to deduce large deviations principles for non equilibrium models (see for example the breakthrough papers \cite{Der4,Der5,Derr6,Derr7,Enaud}). For simplicity, we illustrate the method for a system of particles with state space $\{0,1\}^N$ and then show how our general approach applies. A probability measure represented by the matrix product ansatz often corresponds to a rational model with an infinite countable alphabet $B$. There is not a complete equivalence since in the MPA the matrices
	may have complex entries, even if they are often real and non negative, thus corresponding to a rational model.
	
	\smallskip
	
	We consider the boundary driven totally asymmetric exclusion process (TASEP) on a one dimensional chain $\{1,2\dots ,N\}$ with N sites. This is a continuous time Markov process with generator given by
	\begin{align}\label{gen-ex}
		\mathcal L_N f(\eta)&=\sum_{i=1,\dots N-1}\eta_i(1-\eta_{i+1})\left[f(\eta^{i,i+1})-f(\eta)\right]\\
		&+\alpha\left[f(\eta^{1,+})-f(\eta)\right]+\beta\left[f(\eta^{N,-})-f(\eta)\right]\,,
	\end{align}
	where
	\begin{equation}
		\eta^{i,j}_k=\left\{
		\begin{array}{ll}
			\eta_k & \textrm{if}\ k\neq i,j \\
			\eta_i & \textrm{if}\ k=j \\
			\eta_j & \textrm{if}\ k=i
		\end{array}
		\right.
	\end{equation}
	and
	\begin{equation}
		\eta^{i,+}_k=\left\{
		\begin{array}{ll}
			\eta_k & \textrm{if}\ k\neq i\\
			1 & \textrm{if}\ k=i\,,
		\end{array}
		\right.
		\qquad 
		\eta^{i,-}_k=\left\{
		\begin{array}{ll}
			\eta_k & \textrm{if}\ k\neq i\\
			0 & \textrm{if}\ k=i\,,
		\end{array}
		\right.
	\end{equation}
	and $\alpha,\beta$ with $\leq\alpha,\beta\leq 1$ are positive parameters.
	Informally, we have a chain of $N$ sites where on each site at most one particle can be located; the variable $\eta_i$ assumes the value $1$ when there is a particle at $1\leq i\leq N$ and assumes the value zero when the site $i$ is empty. Particles jump with rate one to the nearest neighbor site to the right (i.e. from site $i$ to site $i+1$) and the jump is suppressed if this site is already occupied; finally, particles are injected with rate $\alpha$ in the leftmost site $1$, when it is empty, and particles in the rightmost site $N$ are destroyed with rate $\beta$.
	
	This is an important non equilibrium model whose invariant measure can be represented by the matrix product ansatz. To be aligned with the usual notation for this method we call the two matrices $(M^{(a)})_{a=0,1}$ also as  $M^{(0)}=E$ and $M^{(1)}=D$. To have the invariant measure the following relations have to be satisfied for some vectors $\ket x$ and $\bra y$
	\begin{equation}\label{comm}
		\left\{
		\begin{array}{l}
			DE=D+E \,,\\
			D \ket x=\frac 1\beta \ket x\,, \\
			\bra y E=\frac 1\alpha \bra y\,.
		\end{array}
		\right.
	\end{equation}
	For this model the matrix product ansatz states that the invariant measure of \eqref{gen-ex} is given by the rational measure \eqref{inv-rational} when the matrices satisfy \eqref{comm}. Equations \eqref{comm} do not identify uniquely the matrices $(M^{(a)})_{a=1,2}$, and there are several different representations for different values of the parameters; all of them, apart from very special cases, are obtained by infinite countable matrices. This is somehow unavoidable; as we will show in the next sections, the LDP principles are quite different in the cases of rational models with finite or infinite matrices. In the infinite case we will discuss only an example of matrix representation of the relations \eqref{comm}. The general strategy can, however, be applied quite generally.

	\section{The finite case}
	In this section we consider the case when the matrices $(M^{(a)})_{a\in A}$ are finite.
	Since our main aim is to illustrate the general method, we consider the simplest situation where all the matrices $(M^{(a)})_{a\in A}$ are irreducible and aperiodic and all the components of the vectors $\ket x,\ket y$ are strictly positive. We will discuss more general cases in a forthcoming paper \cite{GGIM} where also the mixture structure in \cite{GI1} will be used to get a different representation. We refer to  \cite{G2} for results in the general case, obtained by still another different approach. Our approach can also be adapted to the case of infinite matrices, as we will see in the next section. For simplicity, we compute the large deviations for some lower order algebraic and spatial empirical measures. Again with a more detailed analysis the result can be extended to higher order.
	
	\smallskip
	
	\subsection{Enlarged Markov bridges}\label{sec:bridge}
	
	Given the collection of matrices $(M^{(a)})_{a\in A}$ we define an $\left(|A||B|\right)\times \left(|A||B|\right)$ matrix $\mathfrak M$ having rows and columns labeled by elements of $A\times B$ and defined as
	\begin{equation}\label{defT}
		\mathfrak M_{(a,b),(a',b')}:=M^{(a)}_{b,b'}\,, \qquad a,a'\in A\,;\ b,b'\in B\,.
	\end{equation}
	We discuss some preliminary lemmas.
	
	\begin{lemma}\label{lemmaT}
		If the $B\times B$ matrices $(M^{(a)})_{a\in A}$ are irreducible and aperiodic, then the $(A\times B)\times (A\times B)$ matrix $\mathfrak M$ defined in \eqref{defT} is irreducible and aperiodic.
	\end{lemma}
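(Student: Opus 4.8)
The plan is to unpack the two conclusions — irreducibility and aperiodicity — into purely graph-theoretic statements about the directed graph $(A\times B, \mathfrak E)$ associated with $\mathfrak M$, and to read off each from the analogous property of the graphs of the individual matrices $M^{(a)}$. Write $(B, E^{(a)})$ for the graph of $M^{(a)}$ and recall from the definition \eqref{defT} that $\bigl((a,b),(a',b')\bigr)\in\mathfrak E$ if and only if $\mathfrak M_{(a,b),(a',b')}=M^{(a)}_{b,b'}>0$, i.e. iff $(b,b')\in E^{(a)}$. The key structural observation is that the second coordinate of the target vertex is unconstrained by the first coordinate of the source: an edge out of $(a,b)$ lands in $(a',b')$ for \emph{every} $a'\in A$ as soon as $(b,b')\in E^{(a)}$.

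First I would prove irreducibility, i.e. that $(A\times B,\mathfrak E)$ is strongly connected. Fix $(a,b)$ and $(a',b')$. Since $M^{(a)}$ is irreducible, there is a path $b=c_0\to c_1\to\cdots\to c_k=b'$ in $(B,E^{(a)})$; I may take $k\ge 1$ (if $b=b'$ use aperiodicity, or simply the fact that irreducible matrices on $|B|\ge 1$ states have $M^{(a)}_{b,b}\! >\!0$ for some power, yielding a closed walk of positive length through $b$). Now lift this to $\mathfrak E$: at the first step take the edge from $(a,c_0)$ to $(a',c_1)$, which is in $\mathfrak E$ because $(c_0,c_1)\in E^{(a)}$ and the target's first coordinate is free; then continue with edges from $(a',c_j)$ to $(a',c_{j+1})$ for $j=1,\dots,k-1$, each valid because $(c_j,c_{j+1})\in E^{(a')}$ — wait, here one must be careful, since $c_1,\dots,c_k$ was a path for $M^{(a)}$, not $M^{(a')}$. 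The clean fix is: keep the first coordinate equal to $a$ all the way along, using edges $(a,c_j)\to(a,c_{j+1})$ (valid since $(c_j,c_{j+1})\in E^{(a)}$), arriving at $(a,b')$; then note there is an edge $(a,b')\to(a'',b')$ for any $a''$ provided some $M^{(a)}_{b',b''}>0$ with the right target — actually the simplest route is to change the first coordinate at the \emph{last} step: pick a path $b\to\cdots\to c_{k-1}\to b'$ in $(B,E^{(a)})$ and replace the final edge $(a,c_{k-1})\to(a,b')$ by $(a,c_{k-1})\to(a',b')$, which lies in $\mathfrak E$ since $(c_{k-1},b')\in E^{(a)}$. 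This gives a path from $(a,b)$ to $(a',b')$, so $\mathfrak M$ is irreducible.

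Next, aperiodicity. Since $\mathfrak M$ is now known to be irreducible, all states have a common period $p$, and it suffices to exhibit, for one fixed vertex $(a,b)$, a closed walk of length $\ell$ and another of length $\ell'$ in $(A\times B,\mathfrak E)$ with $\gcd(\ell,\ell')=1$. Because $M^{(a)}$ is aperiodic and irreducible, the set $\{k:\ (M^{(a)})^k_{b,b}>0\}$ has gcd $1$ and is closed under addition, hence contains two coprime integers $\ell,\ell'$; each corresponds to a closed walk $b\to\cdots\to b$ in $(B,E^{(a)})$, which lifts verbatim to a closed walk $(a,b)\to\cdots\to(a,b)$ in $(A\times B,\mathfrak E)$ by keeping the first coordinate fixed at $a$ throughout (every edge $(c,c')\in E^{(a)}$ gives the edge $(a,c)\to(a,c')\in\mathfrak E$). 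Therefore $p\mid\ell$ and $p\mid\ell'$, forcing $p=1$.

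I do not expect a genuine obstacle here; the only point requiring mild care is the bookkeeping in the irreducibility step — making sure the first coordinate is switched using an edge that genuinely exists in $\mathfrak E$, which the argument above handles by switching it on a single step whose underlying $B$-edge belongs to the graph of the \emph{source} matrix $M^{(a)}$. If one prefers to avoid even that, an alternative is to prove a stronger quantitative statement: for any $a,a'$ one has $\mathfrak M^k_{(a,b),(a',b')} \ge (M^{(a)})^k_{b,b'}$ is false in general, but $\sum_{a'}\mathfrak M^k_{(a,b),(a',b')}\cdot(\text{something})$ — this is more cumbersome, so I would stick with the direct path-lifting argument. A brief remark at the end could note that irreducibility of $\mathfrak M$ does \emph{not} require aperiodicity of the $M^{(a)}$ (only the period-$1$ conclusion does), but since the hypothesis bundles both, there is no need to separate them.
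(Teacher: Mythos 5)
Your proof is correct and follows essentially the same approach as the paper: both arguments lift a path in the graph of the $B\times B$ matrices to the product graph on $A\times B$, exploiting the fact that the target's first coordinate is unconstrained by the definition of $\mathfrak M$ (your version switches the $A$-label only on the final edge, while the paper switches it at every step according to which $M^{(a^*_i)}$ realizes the edge; the difference is cosmetic). Your aperiodicity argument, via two coprime closed-walk lengths lifted with the first coordinate held fixed, is a complete version of the step the paper dismisses as following ``easily.''
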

	\begin{proof}
		Since all the matrices $(M^{(a)})_{a\in A}$ are irreducible then also $M=\sum_{a\in A}M^{(a)}$ is irreducible. Given any pair $b_I,b_F\in B$ then there exists a path $(b^*_1,\dots b^*_\ell)$ such that $b^*_1=b_I$, $b^*_\ell=b_F$ and $M_{b^*_i,b^*_{i+1}}>0$. Let us call $a^*_i\in A$, $i=1,\dots,\ell-1$ an element such that $M^{(a^*_i)}_{b^*_i,b^*_{i+1}}>0$. We need to show that for any $(a,b),(a'.b')\in A\times B$ there exists a path $(a_i,b_i)_{i=1}^k$ that satisfies  $(a_1,b_1)=(a,b)$, $(a_k,b_k)=(a',b')$ and $\mathfrak M_{(a_i,b_i),(a_{i+1},b_{i+1})}>0$. Since by irreducibility there exists $b_I$ such that $M^{(a)}_{b,b_I}$ is strictly positive, we define $(a_1,b_1)=(a,b)$, $(a_2,b_2)=(a^*_1,b_I)$ where $a^*_i$ and $b^*_i$ are the sequences associated, as before, with a path from $b_I$ to $b_F=b'$ and then $(a_i,b_i)=(a^*_{i-1},b^*_{i-1})$, $i=2, \dots k$ (observe that $a^*_\ell$ can be fixed equal to $a'$), and we have $k=\ell+1$. Aperiodicity follows easily by aperiodicity of all the $(M^{(a)})_{a\in A}$.
	\end{proof}
	
	Recall the Perron-Frobenius theorem in Section \ref{PF-finite} in the appendix. We have the following relations among maximal eigenvalues and eigenvectors of the matrices $M$ and $\mathfrak M$.
	
	\begin{lemma}\label{lemmae}
		If all the $(M^{(a)})_{a\in A}$ are irreducible and aperiodic, we have that the maximal eigenvalue $\Lambda$ of the matrix $\mathfrak M$ is equal to the maximal eigenvalue $\lambda$ of the matrix $M$, moreover if $\ket{e}$ is the positive maximal eigenvector for $M$ then the positive maximal eigenvector $\ket{\varepsilon}$ of $\mathfrak M$ is given by
		\begin{equation}\label{maxeL}
			\varepsilon(a,b):=\frac 1\lambda \sum_{b'\in B}M^{(a)}_{b,b'} e(b')\,.
		\end{equation} 
	\end{lemma}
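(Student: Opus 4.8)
The plan is to verify directly that the vector $\ket{\varepsilon}$ defined by \eqref{maxeL} is a positive eigenvector of $\mathfrak M$ and then invoke uniqueness from Perron–Frobenius (Lemma \ref{lemmaT} guarantees $\mathfrak M$ is irreducible and aperiodic, so its Perron eigenvalue and eigenvector are well defined and unique up to scaling). First I would compute, for any $(a,b)\in A\times B$,
\begin{align}
\sum_{(a',b')\in A\times B}\mathfrak M_{(a,b),(a',b')}\,\varepsilon(a',b')
&=\sum_{a'\in A}\sum_{b'\in B}M^{(a)}_{b,b'}\,\varepsilon(a',b')\\
&=\sum_{b'\in B}M^{(a)}_{b,b'}\sum_{a'\in A}\frac1\lambda\sum_{b''\in B}M^{(a')}_{b',b''}e(b'')\\
&=\frac1\lambda\sum_{b'\in B}M^{(a)}_{b,b'}\sum_{b''\in B}M_{b',b''}e(b'')\\
&=\frac1\lambda\sum_{b'\in B}M^{(a)}_{b,b'}\,\lambda\, e(b')
=\lambda\,\varepsilon(a,b)\,,
\end{align}
where in the third line I used $\sum_{a'\in A}M^{(a')}_{b',b''}=M_{b',b''}$ and in the fourth line the eigenvalue relation $M\ket e=\lambda\ket e$. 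This shows $\mathfrak M\ket{\varepsilon}=\lambda\ket{\varepsilon}$.

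Next I would argue positivity: since $\ket e$ is the Perron eigenvector of the irreducible $M$, all its entries are strictly positive, and for each fixed $a$ the row $M^{(a)}_{b,\cdot}$ is non-negative and not identically zero (otherwise $M^{(a)}$ would have a zero row, contradicting irreducibility of $M^{(a)}$); hence $\varepsilon(a,b)=\frac1\lambda\sum_{b'}M^{(a)}_{b,b'}e(b')>0$ for every $(a,b)$. Therefore $\ket{\varepsilon}$ is a strictly positive eigenvector of the irreducible matrix $\mathfrak M$, and by the Perron–Frobenius theorem the corresponding eigenvalue must be the maximal one, i.e. $\Lambda=\lambda$, and $\ket{\varepsilon}$ is (up to normalization) the maximal eigenvector of $\mathfrak M$.

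The only genuinely delicate point is making sure the sums above are legitimate, i.e. that in the countable-$B$ case all the interchanges of summation are justified. This is not an obstacle under the standing summability hypotheses on the model: all terms are non-negative, so Tonelli's theorem applies and the rearrangements are unconditionally valid (the value $\sum_{b'}M_{b',b''}e(b'')$ is finite precisely because $M\ket e=\lambda\ket e$ with $\lambda<\infty$). In the finite case there is nothing to check. I would also remark that the identity \eqref{maxeL} can be read as saying $\ket{\varepsilon}$ is obtained from $\ket e$ by "disaggregating" the single step $M\ket e=\lambda\ket e$ according to which symbol $a\in A$ is used, which is exactly the structural content of the enlargement construction; this same computation, transposed, yields the analogous statement for the left maximal eigenvector, so I would state that in parallel if needed later.
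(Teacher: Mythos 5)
Your proposal is correct and follows essentially the same route as the paper: verify by direct computation that the vector $\ket{\varepsilon}$ of \eqref{maxeL} is a strictly positive eigenvector of $\mathfrak M$ with eigenvalue $\lambda$, then conclude via Perron--Frobenius (using Lemma \ref{lemmaT}) that this must be the maximal eigenpair. The only cosmetic difference is that the paper isolates the identity $\sum_{a\in A}\varepsilon(a,b)=e(b)$ as an intermediate step, whereas you unfold the definition of $\varepsilon$ and resum directly; your extra remarks on positivity and on Tonelli in the countable case are sound but not needed here since this lemma lives in the finite setting.
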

	\begin{proof}
		By the previous lemma, $\mathfrak M$ is irreducible and aperiodic and therefore it has a unique maximal eigenvalue with a positive eigenvector by Perron Frobenius theorem (see theorem \ref{thPFf}). By irreducibility and the positivity of $\ket{e}$, we have that the vector $\ket{\varepsilon}$, as defined by formula \eqref{maxeL}, has all positive elements. 
		
		Taking the sum over $a\in A$ on both sides of \eqref{maxeL} we observe that
		\begin{equation}\label{serve}
			\sum_{a\in A}\varepsilon(a,b)=\frac 1\lambda \sum_{b'\in B}M_{b,b'}\underline e(b')= e (b)\,.
		\end{equation}
		To finish the proof it is enough to show that $\ket\varepsilon$ is an  eigenvector of $\mathfrak M$ with eigenvalue $\lambda>0$ and this follows from
		\begin{align}
			&\sum_{a',b'}\mathfrak M_{(a,b),(a',b')}\varepsilon(a',b')=\sum_{a',b'}M^{(a)}_{b,b'}\varepsilon(a',b')\\
			& =\sum_{b'}M^{(a)}_{b,b'} e(b')=\lambda \varepsilon(a,b)\,,
		\end{align}
		where we used \eqref{serve} and the definition \eqref{maxeL}.
	\end{proof}
	
	We introduce the matrices:
	\begin{equation}\label {matriceS}
		\left\{
		\begin{array}{l}
			S_{b,b'}= \frac{1}{\lambda} e(b)^{-1} M_{b,b'}e(b')\,, \\
			\mathfrak S_{(a,b),(a',b')}=\frac 1\lambda \varepsilon^{-1}(a,b)\mathfrak M_{(a,b),(a',b')}\varepsilon(a',b')
		\end{array}
		\right.
	\end{equation}
	that by Lemmas \ref{lemmaT}, \ref{lemmae} and \ref{stoclemma}, are stochastic, irreducible and aperiodic.
	Introducing the $B\times B$ diagonal matrix $E$ having diagonal elements $ E_{b,b}=e(b)$ and the  $(A\times B)\times (A\times B)$ diagonal matrix $\mathcal E$ having diagonal elements $\mathcal E_{(a,b),(a,b)}=\varepsilon(a,b)$, the formula \eqref{matriceS} can be compactly written as 
	\begin{equation}\label{matriceScompact}
		\left\{
		\begin{array}{l}
			S=\frac 1\lambda E^{-1}M E\,, \\
			\mathfrak S=\frac 1\lambda \mathcal E^{-1}T\mathcal E.
		\end{array}
		\right.
	\end{equation}

	We will not use the following lemma on the unique invariant measures directly, but we state and prove it anyway.

	\begin{lemma}
		If $\theta=(\theta_b)_{b\in B}$ is the invariant measure of the stochastic matrix $S$, then
		\begin{equation} \label{invmeas}
			\Theta(a,b)= \theta(b) \frac{\varepsilon(a,b)}{e(b)}\,, \qquad a\in A, b\in B\,,
		\end{equation}
		is the invariant measure of $\mathfrak S$. 
	\end{lemma}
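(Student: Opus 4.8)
The plan is to check directly that the row vector $\Theta$ defined in \eqref{invmeas} satisfies $\Theta\,\mathfrak S=\Theta$ together with $\sum_{a,b}\Theta(a,b)=1$; since $\mathfrak S$ is irreducible (by Lemmas \ref{lemmaT}, \ref{lemmae} and \ref{stoclemma}) it admits a unique invariant probability measure, and this identifies $\Theta$ as that measure.

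First I would unfold the definitions \eqref{matriceS} and \eqref{defT} and compute, for a fixed pair $(a',b')\in A\times B$,
\begin{align*}
\sum_{a,b}\Theta(a,b)\,\mathfrak S_{(a,b),(a',b')}
&=\sum_{a\in A}\sum_{b\in B}\theta(b)\frac{\varepsilon(a,b)}{e(b)}\cdot\frac1\lambda\,\varepsilon^{-1}(a,b)\,M^{(a)}_{b,b'}\,\varepsilon(a',b')\\
&=\frac{\varepsilon(a',b')}{\lambda}\sum_{b\in B}\frac{\theta(b)}{e(b)}\sum_{a\in A}M^{(a)}_{b,b'}\\
&=\frac{\varepsilon(a',b')}{\lambda}\sum_{b\in B}\frac{\theta(b)}{e(b)}\,M_{b,b'}.
\end{align*}
Here the factors $\varepsilon(a,b)$ cancel (they are strictly positive by Lemma \ref{lemmae}, so the conjugation is well defined), and we used $M=\sum_{a\in A}M^{(a)}$ together with the fact that $\mathfrak M_{(a,b),(a',b')}=M^{(a)}_{b,b'}$ does not depend on $a'$, so that the sum over $a$ reconstructs $M$.

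The key step is then to invoke the invariance of $\theta$ under $S$. From $S_{b,b'}=\frac1\lambda e(b)^{-1}M_{b,b'}e(b')$ and $\sum_{b}\theta(b)S_{b,b'}=\theta(b')$ one obtains $\frac1\lambda\sum_{b}\frac{\theta(b)}{e(b)}M_{b,b'}=\frac{\theta(b')}{e(b')}$, and hence the last display equals $\varepsilon(a',b')\,\frac{\theta(b')}{e(b')}=\Theta(a',b')$, which is exactly $\Theta\mathfrak S=\Theta$. To finish, I would note that $\Theta$ is a probability vector: using \eqref{serve}, $\sum_{a,b}\Theta(a,b)=\sum_{b}\frac{\theta(b)}{e(b)}\sum_{a}\varepsilon(a,b)=\sum_{b}\theta(b)=1$. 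I do not expect any real obstacle here — the argument is an elementary algebraic manipulation — the only points worth a word being the invertibility of the $\varepsilon$-conjugation just mentioned and, should an analogous computation be needed for countable $B$, that all summands are non-negative so the rearrangements of sums are justified by Tonelli.
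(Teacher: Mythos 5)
Your proof is correct and follows essentially the same route as the paper: cancel the $\varepsilon$-factors in $\Theta\,\mathfrak S$, sum over $a$ to reconstruct $M$ (the paper phrases this via the auxiliary matrices $S^{(a)}$ with $\sum_a S^{(a)}=S$, but the algebra is identical), and then invoke the invariance of $\theta$ under $S$. Your added normalization check via \eqref{serve} and the uniqueness remark are fine; the only slip is the parenthetical claim that the sum over $a$ works because $\mathfrak M$ ``does not depend on $a'$'' --- what is actually used is just the definition $M=\sum_{a\in A}M^{(a)}$ --- but this does not affect the computation.
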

	\begin{proof}
		Let us introduce the matrices $\left(S^{(a)}\right)_{a\in A}$ defined by $S_{b,b'}^{(a)}:= \frac{1}{\lambda} e(b)^{-1} M^{(a)}_{b,b'}e(b')$.
		A direct relation between $\mathfrak S$ and the matrices $(S^{(a)})_{a\in A}$ is
		\begin{align*}
			\mathfrak S_{(a,b),(a',b')} &= \frac{1}{\lambda}\varepsilon^{-1}(a,b)M^{(a)}_{b,b'}\varepsilon(a',b') \\ & = \left(\frac{\varepsilon(a,b)}{e(b)} \right)^{-1} S^{(a)}_{b,b'} \left(\frac{\varepsilon(a',b')}{e(b')} \right) \,.
		\end{align*} 
		Using the latter relation we can show that \eqref{invmeas} is the invariant measure of $\mathfrak S$: 
		\begin{align*}
			&\sum_{a,b} \Theta(a,b) \mathfrak S_{(a,b),(a',b')} = \sum_{a,b} \Theta(a,b) \left(\frac{\varepsilon(a,b)}{e(b)}\right)^{-1} S^{(a)}_{b,b'} \left( \frac{\varepsilon(a',b')}{e(b')} \right) \\&= \sum_{a,b} \theta(b)S^{(a)}_{b,b'} \left( \frac{\varepsilon(a',b')}{e(b')} \right) =  \sum_{b \in B} \theta(b) S_{b,b'}\left( \frac{\varepsilon(a',b')}{e(b')} \right) \\&=\theta(b') \left( \frac{\varepsilon(a',b')}{e(b')} \right) =\Theta(a',b')\,.
		\end{align*}
	\end{proof}

	We can now illustrate the general structure of the probability measures $\mathcal C_N$ on the enlarged state space;
	to simplify the notation we consider a generic state space $\mathbb A$ instead that $A\times B$.
	Consider a Markov chain on $\mathbb A$ with transition probability
	$P$. Let $g,f: \mathbb A\to \mathbb R^+$ be two positive functions, and define the measure on $\mathbb A^{N+1}$ by
	\begin{equation}\label{bridge}
		\mathbb P_N^{f,g}(\xi)=\frac{f(\xi_1)\prod_{i=1}^NP_{\xi_i,\xi_{i+1}}g(\xi_{N+1})}{Z_N}\,.
	\end{equation}
	A special case is when $f(\cdot)=\delta_{\cdot, a}$ and $g(\cdot)=\delta_{\cdot, a'}$ with $a,a'\in \mathbb A$ and $\delta_{\cdot,\cdot}$ denoting the Kronecker delta; in this case, the measure gives a positive weight only to trajectories that satisfy $\xi_1=a$ and $\xi_{N+1}=a'$. In this special case, the measure is called the Markov bridge with transition probability $P$ between $a$ and $a'$ in the discrete time window $1,\dots ,N+1$ and the normalization constant is given by
	$Z_N=P^N_{a,a'}$. The process corresponds to the Markov chain with transition matrix $P$ conditioned to start in $a$ at time $1$ and to end in $a'$ at time $N+1$. This is a Markov measure but with time non homogeneous transition probabilities. The general case \eqref{bridge} corresponds to a measure for which the initial and final points $(\xi_1,\xi_{N+1})$ are sampled according to a suitable distribution, and then the initial and final points are connected by a Markov bridge starting at $\xi_1$ and finishing at $\xi_{N+1}$.
	The initial and final points $(\xi_1,\xi_{N+1})$ are sampled according to the distribution 
	\begin{equation}\label{mbr}
		m(\xi_1,\xi_{N+1})=\frac{f(\xi_1)P^N_{\xi_1,\xi_{N+1}}g(\xi_{N+1})}{Z_N}\,.
	\end{equation}
	
	The Markov nature of a Markov bridge can be verified observing that 
	$$\mathbb P_N^{f,g}\left(\xi_{k+1}=a_{k+1}|\xi_{k}=a_k, \dots, \xi_1=a_1\right)$$ depends just on $a_{k+1}$ and $a_k$ and the direct computation reveals the time non-homogeneous nature of the transition probabilities
	\begin{equation}
		\mathbb P_N^{f,g}\left(\xi_{k+1}=a_{k+1}|\xi_{k}=a_k, \dots, \xi_1=a_1\right)=\frac{P_{a_k,a_{k+1}}\sum_{\xi_{N+1}}P^{N-k}_{a_{k+1},\xi_{N+1}}g(\xi_{N+1})}{\sum_{\xi_{N+1}}P^{N-k+1}_{a_{k},\xi_{N+1}}g(\xi_{N+1})}\,.
	\end{equation}
	If we need to specify the transition matrix $P$ of the Markov bridge measure we use the notation $\mathbb P_N^{P,f,g}$.
	
	By using the transformation \eqref{matriceS}, we have that the enlarged measure \eqref{ingrassata}, is indeed the Markov bridge measure $\mathbb P_N^{\mathfrak S,f,g}$ associated with the $(A\times B)\times (A\times B)$ stochastic matrix $\mathfrak S$ defined in \eqref{matriceS} and with the functions $f,g;A\times B\to \mathbb R$ defined by
	\begin{equation*}
		\left\{
		\begin{array}{l}
			f(a,b)=y(b)\varepsilon (a,b)\,\\
			g(a,b)=\varepsilon^{-1}(a,b)x(b)\,.
		\end{array}
		\right.
	\end{equation*}
	
	\subsection{The general statement}
	
	We can now state our main result in the finite case. We denote by $\mathcal C^2$ a generic element 
	of $\mathcal M^2_{stat}\left((A\times B)^2\right)$ and by $\mathcal C^1$ its one marginal, i.e.
	\begin{equation}\label{1-margC}
		\mathcal C^1(a,b):=\sum_{(a',b')\in A\times B}\mathcal C^{2}\left[(a,b),(a',b')\right]=
		\sum_{(a',b')\in A\times B}\mathcal C^{2}\left[(a',b'),(a,b)\right]\,,
	\end{equation}
	where the second equality follows from the finite stationarity of $\mathcal C^{2}$.
	We instead denote by $\nu^2$ a generic element 
	of $\mathcal M^2_{stat}(A^2)$. We refer to \cite{denH} for the basic definitions on large deviations.
	\begin{theorem}
		Assume that $|A|<+\infty$, every $(M^{(a)})_{a\in A}$ is irreducible and aperiodic and $\ket x, \ket y$ have strictly positive components. When $\eta$ is distributed according to $\mu_N$ in \eqref{inv-rational}, we have that $\hat\nu^{2}_N$, defined in \eqref{emk}, satisfies a large deviation principle on $\mathcal M^1_{st}(A^2)$ with speed $N$ and  rate functional
		\begin{equation}\label{formulafinita}
			I^2\left(\nu^{2}\right)=\inf \left\{\sum_{a,a'\in A; b,b'\in B}\mathcal C^{2}\left[(a,b),(a',b')\right]\log \frac{\mathcal C^{2}\left[(a,b),(a',b')\right]}{\mathcal C^{1}\left[(a,b)\right]M^{(a)}_{b,b'}}+\log\lambda\right\}\,,
		\end{equation}
		where the infimum is over 
		\begin{equation}\label{vincolo}
			\left\{\mathcal C^2\in \mathcal M^2_{stat}\left((A\times B)^2\right)\,:\,\sum_{b,b'\in B}\mathcal C^{2}\left[(a,b),(a',b')\right]=\nu^{2}(a,a')\,, \forall a,a'\in A\right\}
		\end{equation}
		and $\lambda$ is the maximal eigenvalue of $M$. We have also that $\hat{\Pi}^2_N$ satisfies a large deviations principle on $\mathcal M([0,1]; \mathcal M^+(A^2))$ with rate functional
		\begin{equation}\label{finitaspaz}
			J^2\left(\Pi^2\right)=\left\{
			\begin{array}{ll}
				\int_0^1 I^2\left(\Pi^2(x)\right)dx\,, & \textrm{if}\ \Pi^2=\Pi^2(x)dx\,,\\
				+\infty & \textrm{otherwise}\,,
			\end{array}
			\right.
		\end{equation}
		where $\Pi^2(x)\in \mathcal M^1_{st}(A^2)$ for almost all $x$.
	\end{theorem}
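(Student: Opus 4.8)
\emph{Overall strategy.} Both principles will be obtained by the contraction principle from large deviations principles on the enlarged state space $A\times B$. The crucial input, already available from Section~\ref{sec:bridge}, is that under $\mathcal C_N$ the sample $(\eta,\zeta)$, completed by the irrelevant dummy letter $\eta_{N+1}$ so that it is a word $\xi=(\xi_1,\dots,\xi_{N+1})\in(A\times B)^{N+1}$, is distributed as the Markov bridge measure $\mathbb P_N^{\mathfrak S,f,g}$ of \eqref{bridge} built from the matrix $\mathfrak S$ of \eqref{matriceS}, which by Lemmas~\ref{lemmaT} and~\ref{lemmae} is stochastic, irreducible and aperiodic. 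Because $A\times B$ is finite and $\ket x,\ket y$ have strictly positive entries, $f$ and $g$ are bounded and bounded away from $0$; and because $\mathfrak S$ is stochastic, irreducible and aperiodic, $Z_N=\bra{f}\mathfrak S^N\ket{g}$ converges to a strictly positive limit. Hence the Radon--Nikodym density of $\mathbb P_N^{\mathfrak S,f,g}$ with respect to the law of the homogeneous Markov chain with transition $\mathfrak S$ (started, say, from its invariant measure) is bounded above and below by positive constants uniformly in $N$, so the two families are exponentially equivalent and it suffices to work with the homogeneous chain.

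\emph{The algebraic empirical measure.} For the homogeneous chain on the finite set $A\times B$ with transition $\mathfrak S$, the empirical pair distribution $\hat{\mathcal C}^2_N[\xi]=\frac1N\sum_{i=1}^N\delta_{(\xi_i,\xi_{i+1})}$ satisfies the classical large deviations principle at speed $N$, with good rate function
\[
\tilde I(\mathcal C^2)=\sum_{a,a'\in A;\,b,b'\in B}\mathcal C^2[(a,b),(a',b')]\log\frac{\mathcal C^2[(a,b),(a',b')]}{\mathcal C^1[(a,b)]\,\mathfrak S_{(a,b),(a',b')}}
\]
on $\mathcal M^2_{stat}((A\times B)^2)$ and $+\infty$ elsewhere (the cyclic and linear pair measures differ by a single atom and are interchangeable). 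The map $\Phi$ given by $\Phi(\mathcal C^2)(a,a')=\sum_{b,b'\in B}\mathcal C^2[(a,b),(a',b')]$ is continuous and linear on the finite-dimensional simplices, sends $\mathcal M^2_{stat}((A\times B)^2)$ into $\mathcal M^1_{st}(A^2)$ by \eqref{1-margC}, and satisfies $\Phi(\hat{\mathcal C}^2_N)=\hat\nu^2_N$ up to one boundary atom. Since the $\eta$-marginal of $\mathcal C_N$ is exactly $\mu_N$ (see \eqref{ingrassata}), the contraction principle gives the announced LDP for $\hat\nu^2_N$ on $\mathcal M^1_{st}(A^2)$ with rate $I^2(\nu^2)=\inf\{\tilde I(\mathcal C^2):\Phi(\mathcal C^2)=\nu^2\}$, which is exactly the infimum in \eqref{formulafinita}--\eqref{vincolo}. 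It then remains to identify $\tilde I$ with the bracketed expression in \eqref{formulafinita}: substituting $\mathfrak S_{(a,b),(a',b')}=\lambda^{-1}\varepsilon^{-1}(a,b)M^{(a)}_{b,b'}\varepsilon(a',b')$ into $\tilde I$ produces, besides the wanted term and the constant $\log\lambda$, only the quantity $\sum\mathcal C^2[(a,b),(a',b')]\bigl(\log\varepsilon(a,b)-\log\varepsilon(a',b')\bigr)$, which vanishes because $\mathcal C^1$ is simultaneously the first and the second marginal of $\mathcal C^2$, that is \eqref{1-margC}. (This is the usual gauge invariance of the pair rate function under the Doob conjugation \eqref{matriceS}.)

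\emph{The spatial empirical measure.} I would first establish the spatial LDP for the enlarged chain: the measure-valued measure $\hat{\mathcal P}^2_N[\xi]=\frac1N\sum_{i=1}^N\delta_{(\xi_i,\xi_{i+1})}\delta_{i/N}(dx)$ on $[0,1]\times(A\times B)^2$ obeys an LDP at speed $N$ whose rate is $\int_0^1\tilde I(\mathcal P^2(x))\,dx$ when $\mathcal P^2=\mathcal P^2(x)\,dx$ with $\mathcal P^2(x)\in\mathcal M^2_{stat}((A\times B)^2)$ for a.e.\ $x$, and $+\infty$ otherwise. This follows from a projective-limit argument (Dawson--G\"artner) over partitions $0=t_0<\dots<t_m=1$: on each block $[t_{j-1},t_j]$ the normalized restriction of $\hat{\mathcal P}^2_N$ is the pair empirical distribution of about $(t_j-t_{j-1})N$ consecutive steps, distinct blocks decouple on the exponential scale by exponential mixing of the finite irreducible aperiodic chain (after deleting negligible gaps), so the finite-dimensional vectors satisfy an LDP with rate $\sum_j(t_j-t_{j-1})\tilde I(\mathcal C^2_j)$; passing to the projective limit and using lower semicontinuity and convexity of $\tilde I$ gives the stated rate, while absolute continuity is forced because $\hat{\mathcal P}^2_N(S;\,\cdot\,)$ has total mass at most $\#\{i:i/N\in S\}/N$, a deterministic bound that assigns infinite cost to singular profiles. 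Contracting along the continuous map that pushes each fibre of an element of $\mathcal M([0,1];\mathcal M^+((A\times B)^2))$ forward by $\Phi$ — which sends $\hat{\mathcal P}^2_N$ to $\hat\Pi^2_N$ up to a boundary atom, see \eqref{gesm} — and noting that the constraint relating an admissible density $\mathcal P^2(x)$ to $\Pi^2(x)$ is pointwise in $x$, one exchanges the infimum with the integral (by a measurable selection $x\mapsto\mathcal P^2(x)$ nearly minimizing $\tilde I$ at $\Pi^2(x)$, using lower semicontinuity of $I^2$), obtaining $J^2(\Pi^2)=\int_0^1 I^2(\Pi^2(x))\,dx$ on absolutely continuous $\Pi^2$ and $+\infty$ otherwise, as in \eqref{finitaspaz}; that $\Pi^2(x)\in\mathcal M^1_{st}(A^2)$ for a.e.\ $x$ is automatic because the two relevant marginals of $\hat\Pi^2_N$ restricted to any interval coincide up to $O(1/N)$.

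\emph{Main obstacle.} All but the spatial LDP for the enlarged chain is bookkeeping — the pair LDP is classical, the reduction of the bridge to the homogeneous chain is a bounded change of measure, and the contractions are continuous marginalizations whose effect on the rate is the elementary identity above (and its pointwise-in-$x$ version). The delicate point is the projective-limit construction: proving that distinct spatial blocks become asymptotically independent at exponential scale, so that the rate of $\hat{\mathcal P}^2_N$ is exactly $\int_0^1\tilde I(\mathcal P^2(x))\,dx$, together with the infinite penalty for non-absolutely-continuous profiles.
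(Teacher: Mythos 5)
Your proposal is correct and follows essentially the same route as the paper: represent $\mathcal C_N$ as the Markov bridge $\mathbb P_N^{\mathfrak S,f,g}$ for the Doob-conjugated stochastic matrix $\mathfrak S$, compare it to the homogeneous chain via uniform two-sided bounds on the density (the paper's Lemma \ref{lemmac}), invoke the classical pair LDP for finite irreducible aperiodic chains, cancel the $\varepsilon$-terms by stationarity of $\mathcal C^2$, and contract. Your spatial argument via block decoupling and a projective limit is simply a fleshed-out version of the "classic arguments for Markov measures" that the paper cites without detail, so no substantive divergence.
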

	
	\begin{proof}
		By Lemmas \ref{lemmaT}, \ref{lemmae} we have that the matrix 
		\begin{equation}\label{Sdef}
			\mathfrak S_{(a,b),(a',b')}=\frac 1\lambda \varepsilon(a,b)^{-1}\mathfrak M_{(a,b),(a',b')}\varepsilon(a',b')
		\end{equation}
		is stochastic.
		By the positivity of the vectors $\ket x,\ket y,\ket{\varepsilon}$ we have that there exist positive constants $k,K$ such that 
		$$k \mathbb P_N^{\mathfrak S, \mathrm{st}}\leq \mathbb P_N^{\mathfrak S, f,g}\leq K\mathbb P_N^{\mathfrak S, \mathrm{st}}$$ where we called $\mathbb P_N^{\mathfrak S,\mathrm{st}}$ the stationary Markov measure with transition probability $\mathfrak S$, i.e.
		\begin{equation}
			\mathbb P_N^{\mathfrak S, \mathrm{st}}(\eta,\zeta)=\Theta(\eta_1,\zeta_1)\prod_{i=1}^{N}\mathfrak S_{(\eta_i,\zeta_i),(\eta_{i+1},\zeta_{i+1})}\,,
		\end{equation}
		where $\Theta$ is the unique stationary measure of the transition matrix $\mathfrak S$ and the value of $\eta_{N+1}$ is irrelevant given the form of $\mathfrak S$.
		We can therefore apply Lemma \ref{lemmac} deducing the large deviations for the empirical measure for $\mathbb P_N^{\mathfrak S, f,g}$ from the large deviations of the Markov measure $\mathbb P_N^{\mathfrak S,\mathrm{st}}$, that are well established (see for example \cite{denH}) since it is a finite state irreducible  stationary Markov measure. 
		
		Let us call 
		\begin{equation}\label{emprod}
			\hat {\mathcal C}^{2}_{N+1}:=\frac{1}{N+1}\sum_{i=1}^{N+1}\delta_{\big((\eta_i,\zeta_i),(\eta_{i+1},\zeta_{i+1})\big)}\in \mathcal M^2_{st}\left((A\times B)^2\right)\,,
		\end{equation}
		where as usual we have that the sums are taken modulo $N+1$. By the general theory of large deviations for Markov chains, see for example \cite{denH}, we have that \eqref{emprod} satisfies a LDP with rate functional $\mathcal I^2$ on $\mathcal M^2_{st}\left((A\times B)^2\right)$ that is defined as follows. Let $\mathcal C^{2}\in \mathcal M^2_{st}\left((A\times B)^2\right)$ and recall that $C^{1}\in \mathcal M^1(A\times B)$ is its one marginal \eqref{1-margC}; we have then \cite{denH}
		\begin{align}
			\mathcal I^2(\mathcal C^{2})&=\sum_{a,a',b,b'}\mathcal C^{2}\left[(a,b),(a',b')\right]\log \frac{\mathcal C^{2}\left[(a,b),(a',b')\right]}{\mathcal C^{1}\left[(a,b)\right]\mathfrak S_{(a,b),(a',b')}}\,,\\
			&=\sum_{a,a',b,b'}\mathcal C^{2}\left[(a,b),(a',b')\right]\log \frac{\mathcal C^{2}\left[(a,b),(a',b')\right]}{\mathcal C^{1}\left[(a,b)\right]M^{(a)}_{b,b'}}+\log\lambda\,,
		\end{align}
		where the second inequality follows since $\mathcal C^{2}\in \mathcal M^2_{st}\left((A\times B)^2\right)$.
		We can now apply the contraction principle obtaining for  $\nu^{2}\in\mathcal M^2_{stat}(A^2)$
		\begin{equation}
			I^2(\nu^{2})=\inf_{\{\sum_{b,b'}\mathcal C^{2}\left[(a,b),(a',b')\right]=\nu^{2}(a,a')\,, \forall a,a'\}} \mathcal I^2(\mathcal C^{2})\,,
		\end{equation}
		and this finishes the proof for the algebraic case. The proof of \eqref{finitaspaz} can also be obtained following classic arguments for Markov measures.
	\end{proof}
	Large deviations principles for lower order empirical measures can be obtained by contraction, LDP for higher order empirical measures can be obtained following the same stategies used above and using the classic arguments for Markov measures (see, for example, \cite{denH}) on the enlarged state space and then applying the contraction principle.
	
	In the general case it is difficult to get a more explicit form of the rate functional with respect to \eqref{formulafinita}. We discuss, however, some special cases.
	
	\subsection{Examples}
	
	\subsubsection{A general form}
	The critical condition in the minimization \eqref{formulafinita} is obtained considering several constraints that the measure $C^{2}$ has to satisfy. The first collection of constraints is given by \eqref{vincolo}, while the other collection of constraints is obtained imposing $C^{2}\in \mathcal M_{st}\left((A\times B)^2\right)$. To each constraint there corresponds a Lagrange multiplier. By a direct computation, the critical conditions are equivalent to
	\begin{equation}\label{constraints}
		\frac{\mathcal C^{2}\left[(a,b),(a',b')\right]}{\mathcal C^{1}\left[(a,b)\right]}=k^{-1}M^{(a)}_{b,b'}p(a,a')\frac{\gamma_{a',b'}}{\gamma_{a,b}}\,,
	\end{equation}
	where the positive matrix $p$ encodes the set of Lagrange multipliers for the constraints \eqref{vincolo}, while the numbers $\gamma$ and the constant $k$ are the Lagrange multipliers for the second group of constraints. Since on the left-hand side we have a stochastic matrix, we obtain that the minimizer is obtained when $\left(\gamma_{a,b}\right)_{a\in A,b\in B}$ is the maximal eigenvector of the positive $(A\times B)\times (A\times B)$ matrix $\left(M^{(a)}_{b,b'}p(a,a')\right)_{a,a'\in A}^{b,b'\in B}$ and $k$ is the corresponding maximal eigenvalue.
	
	\smallskip
	Inserting \eqref{constraints} as the minimizer in \eqref{formulafinita} we obtain, using that $\mathcal C^{2}$ is stationary and has $\nu^{2}$ as $A\times A$ marginal (that are the constraints \eqref{vincolo}),
	\begin{equation}\label{finitap}
		I(\nu^{2})=\sum_{a,a'}\nu^{2}(a,a')\log p(a,a')+\log (\lambda/k)\,.
	\end{equation}
	It can be shown that in \eqref{constraints} and \eqref{finitap} we can restrict to matrices $p(a,a')$ that are stochastic. Similar considerations can also be applied for the spatial empirical measure.

	\subsubsection{Parallel eigenvectors}
	We consider the special case when 
	\begin{equation}\label{special-parallel}
		M^{(a)}_{b,b'}=m(a)S^{(a)}_{b,b'}\frac{\phi(b)}{\phi(b')}
	\end{equation}
	where $(S^{(a)})_{a\in A}$ is a collection of $B\times B$ stochastic matrices, $(m(a))_{a\in A}$ is a collection of positive numbers and $(\phi(b))_{b\in B}$ is a positive vector. This corresponds to a family of positive matrices $M^{(a)}$, each having the same maximal eigenvector $\ket \phi$ with corresponding maximal eigenvalue $m(a)$. Indeed, Lemma \ref{stoclemma} implies that all the positive matrices having $\ket \phi$ as a maximal eigenvector are of the form \eqref{special-parallel} for some stochastic matrix $S^{(a)}$. We have the following.
	\begin{lemma}\label{poilotogliamo}
		For matrices $(M^{(a)})_{a\in A}$ of the form \eqref{special-parallel} we have that the maximal eigenvalue and eigenvector $k$, $\ket \gamma$ of the $(A\times B)\times (A\times B)$ matrix $\left(M^{(a)}_{b,b'}p(a,a')\right)_{a,a'\in A}^{b,b'\in B}$ are related to $\mu$, $\ket \psi$ those of the $A\times A$ matrix $\left(p(a,a')m(a')\right)_{a,a'\in A}$ by the relations 
		\begin{equation}
			\left\{
			\begin{array}{l}
				k=\mu\,, \\
				\gamma(a,b)=\psi(a)m(a)\phi(b)\,.
			\end{array}
			\right.
		\end{equation}
		Moreover, we have that the maximal eigenvalue $\lambda$ of the $B\times B$ matrix $M$ is $\sum_am(a)$ with corresponding eigenvector $\ket \phi$.
	\end{lemma}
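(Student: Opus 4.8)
The plan is to establish both assertions by direct substitution into the relevant eigenvalue equations, the only external ingredient being the Perron--Frobenius theorem of Section~\ref{PF-finite}, invoked in the form: a non-negative irreducible finite matrix admitting a strictly positive eigenvector has that eigenvalue equal to its spectral radius, hence maximal. Everything rests on the elementary identity
\begin{equation}\label{rowsum-plan}
	\sum_{b'\in B}M^{(a)}_{b,b'}\phi(b')=m(a)\,\phi(b)\,,\qquad a\in A,\ b\in B\,,
\end{equation}
which is immediate from \eqref{special-parallel} and the stochasticity of $S^{(a)}$, since $\sum_{b'}M^{(a)}_{b,b'}\phi(b')=m(a)\phi(b)\sum_{b'}S^{(a)}_{b,b'}=m(a)\phi(b)$.

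Summing \eqref{rowsum-plan} over $a\in A$ yields $\sum_{b'}M_{b,b'}\phi(b')=\big(\sum_{a}m(a)\big)\phi(b)$, so $\ket\phi$ is a right eigenvector of $M$ with eigenvalue $\sum_a m(a)$; since $\ket\phi$ is strictly positive and $M$ is irreducible, this is by Perron--Frobenius the maximal eigenvalue $\lambda$, which proves the last statement. For the first statement, set $R_{(a,b),(a',b')}:=M^{(a)}_{b,b'}p(a,a')$ and plug in the ansatz $\gamma(a,b)=\psi(a)m(a)\phi(b)$. Using the definition of $R$, then \eqref{rowsum-plan}, then that $\ket\psi$ is the maximal right eigenvector of $\big(p(a,a')m(a')\big)_{a,a'\in A}$ with eigenvalue $\mu$, one finds
\begin{align*}
	\sum_{a',b'}R_{(a,b),(a',b')}\,\gamma(a',b')
	&=\sum_{a'}p(a,a')\,\psi(a')\,m(a')\sum_{b'}M^{(a)}_{b,b'}\phi(b')\\
	&=m(a)\,\phi(b)\sum_{a'}p(a,a')\,m(a')\,\psi(a')
	=\mu\,\psi(a)\,m(a)\,\phi(b)=\mu\,\gamma(a,b)\,.
\end{align*}
Hence $\ket\gamma$ is a right eigenvector of $R$ with eigenvalue $\mu$, and it is strictly positive because $\phi,\psi$ and $m$ are; since $p$ has strictly positive entries and $\mathfrak M$ is irreducible by Lemma~\ref{lemmaT}, the matrix $R$ is irreducible, so Perron--Frobenius forces $k=\mu$ and $\ket\gamma$ to be the maximal eigenvector, of the claimed form.

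There is no real obstacle here --- the lemma is a bookkeeping computation. The only steps worth a word are the two invocations of Perron--Frobenius (a strictly positive eigenvector pins down the maximal eigenvalue) together with the attendant irreducibility checks for $\big(M^{(a)}_{b,b'}p(a,a')\big)$ and $\big(p(a,a')m(a')\big)$, which follow from the positivity of $p$ and Lemma~\ref{lemmaT} and guarantee that the maximal eigenpairs referred to in the statement are well defined and unique up to a scalar.
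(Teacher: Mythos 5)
Your proof is correct and follows essentially the same route as the paper: a direct substitution of the ansatz $\gamma(a,b)=\psi(a)m(a)\phi(b)$ into the eigenvalue equation, using the row-sum identity $\sum_{b'}M^{(a)}_{b,b'}\phi(b')=m(a)\phi(b)$, together with the Perron--Frobenius principle that a strictly positive eigenvector identifies the maximal eigenvalue. Your explicit irreducibility checks for $\bigl(M^{(a)}_{b,b'}p(a,a')\bigr)$ and $\bigl(p(a,a')m(a')\bigr)$ are a small tidying-up of details the paper leaves implicit, but the argument is the same.
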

	
	\begin{proof}
		Since a positive eigenvector is necessarily the one associated with the maximal eigenvalue, by a direct computation we have
		\begin{align}
			& \sum_{a',b'}M^{(a)}_{b,b'}p(a,a')\gamma(a',b') \\
			&=\sum_{a',b'}m(a)S^{(a)}_{b,b'}\frac{\phi(b)}{\phi(b')}p(a,a')m(a')\psi(a')\phi(b')\\
			&=\phi(b)m(a)\sum_{a'}p(a,a')m(a')\psi(a')=\mu \gamma(a,b)\,.
		\end{align}
		The last statement follows also by the direct computation
		\begin{align*}
			\sum_{b'}M_{b,b'}\phi(b')=\phi(b)\sum_{a}m(a)\sum_{b'}S^{(a)}_{b,b'}=\left(\sum_a m(a)\right)\phi(b)\,.
		\end{align*}
	\end{proof}
	We can now deduce the following.
	\begin{proposition}
		When the matrices $(M^{(a)})_{a\in A}$ have all the same Perron eigenvector, then the large deviations 
		rate functional $I^2$ in \eqref{formulafinita} coincides with the large deviations rate functional
		for the pair empirical measure of a sequence of i.i.d. random variables taking values on $A$ and having probability distribution $\mathbb P(a)=\frac{m(a)}{\sum_{a'\in A}m(a')}$, $a\in A$; i.e. we have
		\begin{equation}\label{2finpar}
			I(\nu^{2})=\sum_{a,a'}\nu^{2}(a,a')\log\frac{\nu^{2}(a,a')}{\nu^{1}(a)\mathbb P(a')}\,.
		\end{equation}
	\end{proposition}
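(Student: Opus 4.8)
\emph{Proof proposal.} The plan is to bypass unravelling the variational formula \eqref{formulafinita} and instead show that, under hypothesis \eqref{special-parallel}, the measure $\mu_N$ is comparable, uniformly in $N$ and in $\eta\in A^N$, to the i.i.d.\ product measure $\mathbb Q_N(\eta):=\prod_{i=1}^N\mathbb P(\eta_i)$ with $\mathbb P(a)=m(a)/\lambda$, $\lambda=\sum_{a'\in A}m(a')$. The mechanism is the telescoping of the factors $\phi(b)/\phi(b')$. By Lemma \ref{poilotogliamo} the Perron pair of $M=\sum_a M^{(a)}$ is $\lambda=\sum_a m(a)$ with eigenvector $\ket e=\ket\phi$, so setting $\tilde S:=\sum_a\mathbb P(a)S^{(a)}$ a direct induction gives $M^N_{b,b'}=\lambda^N\frac{\phi(b)}{\phi(b')}\tilde S^N_{b,b'}$ and, for every word $\eta\in A^N$,
\begin{equation}
\Big(\prod_{i=1}^N M^{(\eta_i)}\Big)_{b,b'}=\lambda^N\,\mathbb Q_N(\eta)\,\frac{\phi(b)}{\phi(b')}\,\Big(\prod_{i=1}^N S^{(\eta_i)}\Big)_{b,b'}\,.
\end{equation}
Since a product of stochastic matrices is stochastic, every $\prod_{i=1}^N S^{(\eta_i)}$ and $\tilde S^N$ is stochastic, and since $B$ is finite the bilinear forms $\bra y\prod_{i=1}^N M^{(\eta_i)}\ket x$ and $Z_N=\bra y M^N\ket x$ are each equal to $\lambda^N$ times a number lying in the fixed interval $[c_-Y,\,c_+Y]$, where $Y=\sum_b y(b)\phi(b)$, $c_-=\min_b x(b)/\phi(b)$ and $c_+=\max_b x(b)/\phi(b)$ are finite and strictly positive. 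Hence $c\,\mathbb Q_N(\eta)\le\mu_N(\eta)\le C\,\mathbb Q_N(\eta)$ with $c=c_-/c_+$, $C=c_+/c_-$, independent of $N$ and $\eta$. (Equivalently, the enlarged stochastic matrix $\mathfrak S$ of \eqref{matriceS} is here $\mathfrak S_{(a,b),(a',b')}=\mathbb P(a')S^{(a)}_{b,b'}$, i.e.\ the $A$-coordinate of the enlarged chain is i.i.d.\ with law $\mathbb P$, decoupled from the $B$-coordinate.)

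From the uniform two-sided bound it follows that $\frac1N\log\mu_N(E_N)-\frac1N\log\mathbb Q_N(E_N)\to0$ for every sequence of events $E_N\subseteq A^N$, so $\hat\nu^2_N$ obeys under $\mu_N$ exactly the same large deviation upper and lower bounds as under $\mathbb Q_N$. Under $\mathbb Q_N$, however, $\hat\nu^2_N$ is the pair empirical measure of an i.i.d.\ sequence with marginal $\mathbb P$, which satisfies the classical level-$2$ large deviation principle on the compact set $\mathcal M^1_{st}(A^2)$ with good rate functional $\sum_{a,a'}\nu^2(a,a')\log\frac{\nu^2(a,a')}{\nu^1(a)\mathbb P(a')}$ — this being the Donsker--Varadhan pair rate functional of the Markov kernel $P(a,a')=\mathbb P(a')$. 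By uniqueness of the rate functional this quantity coincides with $I^2$, which is \eqref{2finpar}. The point needing care is the passage from $\mathbb Q_N$ to $\mu_N$: one genuinely needs the $N$-independent constants in $c\,\mathbb Q_N\le\mu_N\le C\,\mathbb Q_N$, and this is exactly what the $\phi$-telescoping plus the stochasticity of the products $\prod_i S^{(\eta_i)}$ buys, using crucially that $B$ is finite so that $c_-,c_+,Y$ are positive and finite — the analogous bound would fail for countable $B$ without summability control.

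Alternatively one can stay inside the variational picture and feed Lemma \ref{poilotogliamo} into \eqref{constraints}--\eqref{finitap}. Substituting $M^{(a)}_{b,b'}=m(a)S^{(a)}_{b,b'}\phi(b)/\phi(b')$, $\gamma(a,b)=\psi(a)m(a)\phi(b)$ and $k=\mu$ into \eqref{constraints}, the ratio $\mathcal C^{2}[(a,b),(a',b')]/\mathcal C^{1}[(a,b)]$ factorizes as $S^{(a)}_{b,b'}\cdot\frac{p(a,a')m(a')\psi(a')}{\mu\,\psi(a)}$, so the induced $A$-transition is the Perron stochasticization $\bar P(a,a')=\frac{p(a,a')m(a')\psi(a')}{\mu\,\psi(a)}$ of $\big(p(a,a')m(a')\big)_{a,a'}$; matching this with the stationarity of the optimal $\mathcal C^{2}$ forces $\bar P(a,a')=\nu^{2}(a,a')/\nu^{1}(a)$, hence $p(a,a')=\frac{\mu\,\psi(a)}{m(a')\psi(a')}\frac{\nu^{2}(a,a')}{\nu^{1}(a)}$. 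Inserting this $p$ together with $\lambda=\sum_a m(a)$ and $k=\mu$ into \eqref{finitap}, the terms in $\psi$ and in $\log\mu$ cancel by the stationarity identities $\sum_{a'}\nu^{2}(a,a')=\nu^{1}(a)=\sum_{a'}\nu^{2}(a',a)$, and recognizing $\log\frac{m(a')}{\sum_a m(a)}=\log\mathbb P(a')$ yields \eqref{2finpar} again. Either route, the nontrivial input is Lemma \ref{poilotogliamo} (with Lemma \ref{lemmae}); everything else is bookkeeping.
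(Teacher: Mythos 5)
Your proposal is correct, and it actually contains two arguments. The second one (feeding Lemma \ref{poilotogliamo} into \eqref{constraints}, summing over $(b,b')$ against the constraint \eqref{vincolo} to identify $p(a,a')=\frac{k\,\nu^{2}(a,a')\psi(a)}{\nu^{1}(a)\psi(a')m(a')}$, and substituting into \eqref{finitap} so that the $\psi$ and $\log\mu$ terms cancel by stationarity) is essentially verbatim the paper's proof; your phrase ``stationarity of the optimal $\mathcal C^2$'' is really the marginal constraint \eqref{vincolo}, but the computation is identical. Your first route is genuinely different and is a nice shortcut: the telescoping identity $\big(\prod_i M^{(\eta_i)}\big)_{b,b'}=\lambda^N\,\mathbb Q_N(\eta)\,\frac{\phi(b)}{\phi(b')}\big(\prod_i S^{(\eta_i)}\big)_{b,b'}$, together with stochasticity of the products and finiteness of $B$, gives the uniform two-sided bound $c\,\mathbb Q_N\le\mu_N\le C\,\mathbb Q_N$, so Lemma \ref{lemmac} transfers the classical i.i.d.\ pair LDP to $\mu_N$, and uniqueness of (good, lower semicontinuous) rate functions on the compact set $\mathcal M^1_{st}(A^2)$ identifies it with $I^2$ from \eqref{formulafinita}. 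What this buys is that one never has to solve the variational problem at all, and the identity $\mathfrak S_{(a,b),(a',b')}=\mathbb P(a')S^{(a)}_{b,b'}$ you record makes the mechanism transparent (the $A$-coordinate of the enlarged chain is i.i.d.); what it loses is that it proves only equality of the infimum with the i.i.d.\ rate, whereas the paper's computation also exhibits the explicit minimizer $\mathcal C^2$, which is in the spirit of the surrounding ``general form'' subsection. Your closing remark that the constants degenerate for countable $B$ is apt and consistent with the paper's treatment of the infinite case.
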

	\begin{proof}
		Inserting the eigenvalues obtained in the previous lemma into the relation \eqref{constraints} for the minimizer we get
		$$
		\mathcal C^{2}((a,b),(a',b'))=\mathcal C^{1}(a,b)k^{-1}S^{(a)}_{b,b'}p(a,a')\frac{\psi(a')m(a')}{\psi(a)}\,,
		$$ 
		and we can determine $p$ satisfying \eqref{vincolo} just summing over $(b,b')\in B^2$ the above equality getting
		\begin{equation}\label{pqs}
			p(a,a')=\frac{k \nu^{(2)}(a,a')\psi(a)}{\nu^{(1)}(a)\psi(a')m(a')}\,.
		\end{equation}
		Inserting this value in \eqref{finitap}, recalling that $\nu^{2}\in\mathcal M_{st}\left(A^2\right)$ and the value of $\lambda$ obtained in Lemma \ref{poilotogliamo}, by a direct computation and using Lemma \ref{poilotogliamo} we obtain \eqref{2finpar},
		which is the large deviations rate functional for the pair empirical measure for a product measure on $A$, assigning probability $\mathbb P(a)$ to each element $a\in A$ (see for example \cite{denH}).
	\end{proof}
	As in the general case, also in this case the rate functional for the empirical measures of order different from 2 and for the spatial empirical measures can be obtained by the same approach.

	\subsubsection{Stochastic matrices}
	
	A special case of the situation discussed in the previous subsection is when all the matrices $(M^{(a)})_{a\in A}$ are stochastic. In this case all the Perron eigenvalues are equal to one and the distribution $\mathbb P(a)=\frac{1}{|A|}$ is the uniform one. In this case, the results can also be obtained directly as follows.
	The first observation is that, when all the matrices $(M^{(a)})_{a\in A}$ are stochastic, the measure $\mu_N^{ 1, 1}$ associated with the vectors $\ket x,\ket y$ having all the coordinates identically equal to one, is the uniform measure on $A^N$. This follows by a direct check.
	
	Let us call $|\ket x|_M=\sup_a x(a)$ and $|\ket x|_m=\inf_a x(a)$ that are both finite and strictly positive, we use a similar notation for $y$ too. We have
	$$
	|x|_m|y|_m\mu_N^{\underline 1,\underline 1}\leq \mu_N^{x,y}\leq |x|_M|y|_M\mu_N^{\underline 1,\underline 1}\,,
	$$
	and from Lemma \ref{lemmac} we deduce
	
	\begin{equation}\label{unik}
		I^k(\nu^{k})=\left\{
		\begin{array}{ll}
			\sum_{a_1^k\in A^k}\nu^{k}(a_1^k)\left(\log\nu^{k}(a_1^k)-k\log|A| \right) & \textrm{if} \ \nu^{k}\in \mathcal M^1_{st}(A^k) \\
			+\infty & \textrm{otherwise}
		\end{array}
		\right.
	\end{equation}
	\smallskip
	since the rate functional \eqref{unik} is exactly the rate functionals associated with a product of uniform measures.
	Similar considerations can also be done for the spatial empirical measures.

	\section{The infinite case}
	For Markov measures in the finite case an irreducibility condition is sufficient to obtain a LDP both for the algebraic and spatial empirical measures. This is not the case for a countable infinite alphabet, where suitable conditions have to be satisfied, see \cite{dv1-4}. In the example we consider only LDP for spatial empirical measures can be directly obtained, and we therefore restrict to that case. We apply the general results concerning eigenvalues and eigenvectors discussed in the finite case; a proof of the general statements in the infinite case is more subtle, but for our specific example we proceed by direct computation.
	
	\smallskip
	
	We consider now an example of a rational model with an infinite alphabet $B$; this is the boundary driven TASEP discussed in Section \ref{sec:MPA}. In particular, we consider a special representation of the matrices satisfying the conditions \eqref{comm}, which is valid when the parameters vary in the region $\frac{(1-\alpha)(1-\beta)}{\alpha\beta}<1$, see for example \cite{K}. This corresponds to the following matrices.

	Consider the sets $A=\{0,1\}$, $B=\mathbb{N}\cup \{0\}=:\mathbb N_0$ and the two $B\times B$ matrices:
	
	\begin{equation}\label{matrici}
		M^{(0)}=\begin{bmatrix}
			1 & 0 & 0 & \cdots & 0 & \cdots \\ 1 & 1 & 0 & \cdots & 0 & \cdots \\ 0&1&1&0&\cdots &\cdots \\ 0& 0 &1&1&0 &\cdots \\ \vdots&\vdots&0&\ddots&\ddots&\ddots \\ \vdots&\vdots&\vdots&\ddots&\ddots&\ddots
		\end{bmatrix}\text{,} \ \	M^{(1)}=\begin{bmatrix}
			1 & 1 & 0 & \cdots & 0 & \cdots \\ 0 & 1 & 1 & 0& \cdots & \cdots \\ 0&0&1&1& 0& \cdots & \\ 0& 0 &0&1&1 &\ddots \\ \vdots&\vdots&\vdots&0&\ddots&\ddots \\ \vdots&\vdots&\vdots&\ddots&\ddots&\ddots
		\end{bmatrix}.
	\end{equation}
	These matrices have the following left and right eigenvectors:
	\begin{equation*}
		\left\{
		\begin{array}{l}
			\bra{y}M^{(0)}=\frac{\bra{y}}{\alpha} \\
			M^{(1)}\ket{x}=\frac{\ket{x}}{\beta}
		\end{array}
		\right.
	\end{equation*}
	with:
	\begin{equation}\label{y}
		\bra{ y}= \hat{k} \left [1, \frac{1-\alpha}{\alpha}, \left(\frac{1-\alpha}{\alpha}\right )^2,\left(\frac{1-\alpha}{\alpha}\right )^3,... \right]
	\end{equation}
	\begin{equation}\label{x}
		\ket x=\hat{k}\begin{bmatrix} 1 \\ \frac{1-\beta}{\beta} \\ \left(\frac{1-\beta}{\beta}\right )^2 \\ \left(\frac{1-\beta}{\beta}\right )^3 \\ \vdots
		\end{bmatrix}
	\end{equation}
	where $\hat{k}=\sqrt{(\alpha+\beta-1)/\alpha \beta}$ is a normalization factor that we fix in such a way that $\braket{ y| x}=1$. 
	
	The matrix $M=M^{(0)}+M^{(1)}$ is the infinite matrix with all the elements on the main diagonal equal to $2$, the ones on the lower and on the upper diagonal equal to 1 and all the other entries equal to 0, i.e.
	\begin{equation}\label{M}
		M_{b,b'}=\left\{
		\begin{array}{ll}
			2\delta_{b,b'}+\delta_{b,b'+1}+\delta_{b,b'-1}\,, & b\geq 1\,,\\
			2\delta_{b,b'}+\delta_{b,b'+1}\,, & b=0\,.
		\end{array}
		\right.
	\end{equation}
	The matrix $M$ coincides with the matrix discussed in Section \ref{eigeninf} with $\alpha=2$, $\beta_1=\beta_2=1$, the Perron eigenvalue is therefore $\lambda=4$ and the corresponding positive eigenvector is:
	\begin{equation}\label{eigenn}
		\ket{e}= \begin{bmatrix} 1 \\ 2 \\ \vdots \\ n \\ \vdots
		\end{bmatrix}
	\end{equation}
	\newline
	
	We define the matrix $\mathfrak M_{(a,b),(a',b')}=M^{(a)}_{b,b'}$, with $(a,b) \in \{0,1\} \times \mathbb{N}_0$, which has the same Perron eigenvalue $\Lambda=\lambda=4$ as the matrix $M$ and the corresponding eigenvector is defined according to \eqref{maxeL} by the formula:
	\begin{equation*}
		\varepsilon(a,b)= \frac{1}{4} \sum_{b' \in \mathbb N_0} M^{(a)}_{b,b'}(b'+1)\,,
	\end{equation*}
	recalling that $\ket e=\Big(b+1\Big)_{b\in \mathbb N_0}$ is the eigenvector \eqref{eigenn} of the matrix $M$ associated to the eigenvalue $\lambda=4$. By a direct computation we obtain:
	\begin{equation*}
		\varepsilon(a,b)= \frac{1}{4} (2b+2a+1)\,, \qquad a\in\{0,1\}\,, \ b\in \mathbb N_0\,.
	\end{equation*}
	We can now write the stochastic $\Big(\{0,1\} \times \mathbb{N}_0 \Big)\times \Big(\{0,1\} \times \mathbb{N}_0\Big) $ matrix $\mathfrak S$ explicitly:
	\begin{equation}\label{MS}
		\mathfrak S_{(a,b),(a',b')}=\frac{1}{4} \frac{2(b'+a')+1}{2(b+a)+1} M^{(a)}_{b,b'}\,, \qquad a\in\{0,1\}\,,\, b\in \mathbb N_0\,.
	\end{equation}
	Recalling the form of the matrix $M$ in \eqref{M}, we have that all the elements of the matrix $\mathfrak S$ are equal to zero except for those of the form specified below. The non zero elements associated with transitions from the state $(0,0)$ are
	\begin{equation*}
		\mathfrak S_{(0,0),(0,0)}= \frac{1}{4}\,, \qquad  \mathfrak S_{(0,0),(1,0)}=\frac{3}{4}\,;
	\end{equation*}
	the non zero elements associated with transitions from the state $(1,0)$ are
	\begin{equation*}
		\mathfrak S_{(1,0),(1,0)}=S_{(1,0)(0,1)}=\frac{1}{4}\,, \qquad \mathfrak S_{(1,0),(0,0)}=\frac{1}{12}\,, \qquad \mathfrak S_{(1,0),(1,1)}=\frac{5}{12}\,;
	\end{equation*}
	the non zero elements associated with transitions from the state $(0,b)$ with $b\geq 1$ are
	\begin{equation*}
		\mathfrak S_{(0,b),(0,b)}=\mathfrak S_{(0,b),(1,b-1)}= \frac{1}{4}\,, \ \mathfrak S_{(0,b)(1,b)}=\frac{2b+3}{4(2b+1)}\,, \ \mathfrak S_{(0,b),(0,b-1)}=\frac{2b-1}{4(2b+1)}\,;
	\end{equation*}
	the non zero elements associated with transitions from the state $(1,b)$ with $b\geq 1$ are
	\begin{equation*}
		\mathfrak S_{(1,b),(1,b)}=\mathfrak S_{(1,b),(0,b+1)}=\frac{1}{4}\,, \ \mathfrak S_{(1,b),(0,b)}=\frac{2b+1}{4(2b+3)}\,, \ \mathfrak S_{(1,b),(1,b+1)}=\frac{2b+5}{4(2b+3)}\,.
	\end{equation*}
	The transition graph associated with the matrix $\mathfrak S$ is drawn in Figure \ref{vera}. The corresponding Markov chain is a special random walk on two infinite lines with a left boundary at the origin of the two lines. Transitions are possible between the two lines. The transition probabilities of the random walk are spatially non homogeneous. According to the discussion in Section \ref{sec:bridge}, the enlarged measure of the boundary driven TASEP is a Markov bridge of length $N+1$ of this special random walk.
	\begin{figure}
		\centering
		\includegraphics[width=10cm]{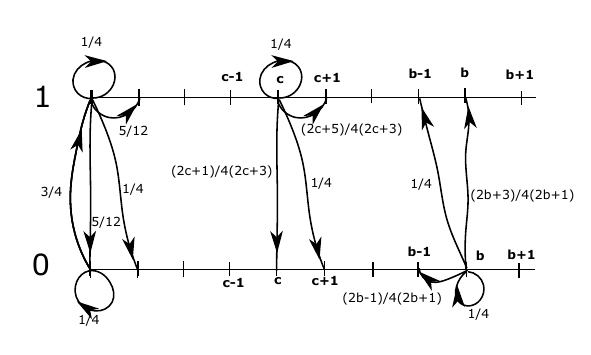}
		\caption{A graphical representation of the transition probabilities of the Markov chain $\mathfrak S$ on $\{0,1\}\times \mathbb N_0$. The coordinates ${\bf c,b}$ on the axis $\mathbb N_0$ are written in boldface to distinguish them from the values of the transition probabilities.}\label{vera}
	\end{figure}
	
	\smallskip
	
	It will be useful in order to simplify the proofs to introduce an effective random walk on $\{0,1\} \times \mathbb{N}_0$ with a simpler transition matrix $\mathcal S$. This is again a random walk on the two infinite lines but it is spatially homogeneous with a different behavior just at the boundary sites. The transition matrix is given by
	\begin{equation}\label{toy}
		\left\{
		\begin{array}{ll}
			\mathcal S_{(0,b)(a', b+s)}=\frac 14\,,& b\neq 0;\ a'=0,1;\ s=0,-1;\\
			\mathcal S_{(1,b)(a', b+s)}=\frac 14\,, &a'=0,1;\ s=0,+1; \\
			\mathcal S_{(0,0)(a',0)}=\frac 12\,, & a'=0,1\,;
		\end{array}
		\right.
	\end{equation}
	we give a graphical representation of the transition probabilities in Figure \ref{effettivo}.
	\begin{figure}
		\centering
		\includegraphics[width=10cm]{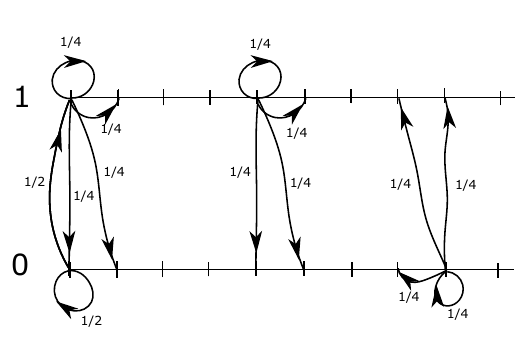}
		\caption{A graphical representation of the transition probabilities of the effective Markov chain $\mathcal S$ on $\{0,1\}\times \mathbb N_0$.}\label{effettivo}
	\end{figure}
	The stochastic matrix $\mathcal S$ is almost related by a generalized Doob transform to the stochastic matrix $\mathfrak S$, the relation is violated only for transitions exiting from the state $(0,0)$. By a direct computation, we obtain the relation
	\begin{equation}\label{FnonF}
		\mathcal S_{(a,b)(a',b')}=\mathfrak S_{(a,b)(a',b')}\frac{2(a+b)+1}{2(a'+b')+1}2^{\id\big((a,b)=(0,0)\big)}
		=\frac 14 M^{(a)}_{b,b'}2^{\id\big((a,b)=(0,0)\big)}\,.
	\end{equation}
	The possibility to work with this simple homogeneous random walk is due to the special form of the matrices $M^{(0)},M^{(1)}$.
	
	\smallskip

	Let us recall  that $\mathbb P^{\mathfrak S, f,g}_{N}$ and $\mathbb P^{\mathcal S,f',g'}_{N}$ denote Markov bridges with transition matrix $\mathfrak S$ and $\mathcal S$, respectively. We call $\mathfrak Z_N(f,g)$ and $\mathcal Z_N(f',g')$ the corresponding normalization factors appearing in the definition \eqref{bridge}. We denote by $(\eta,\zeta)=(\eta_k,\zeta_k)_{k=1}^{N+1}$ a trajectory on $\{0,1\}\times \mathbb N_0$ of length $N+1$ and having positive probability with respect to the Markov measures. We call $\mathbb P^{\mathfrak S}_{N}$ and $\mathbb P^{\mathcal S}_{N}$ the corresponding Markov measures with fixed initial condition; we have for example
	$\mathbb P^{\mathfrak S}_{N}(\eta,\zeta):=\prod_{i=1}^{N}\mathfrak S_{(\eta_i,\zeta_i),(\eta_{i+1},\zeta_{i+1})}$, and likewise for the other matrix.
	Finally, considering \eqref{em1} for the product space $A\times B$, we call $\hat{\nu}^1_N[\eta,\zeta]=\frac{1}{N+1} \sum_{i=1}^{N+1}\delta_{(\eta_i,\zeta_i)}\in \mathcal M^1(A\times B)$. The introduction of the simplified homogeneous Markov measure with transition probability $\mathcal S$ is due to the fact that we can deduce LDP's for Markov measures with transition probability $\mathfrak S$ from the corresponding ones for the matrix $\mathcal S$. This follows from the next lemma. Let us introduce the function $h:\{0,1\}\times \mathbb N_0\to \mathbb R$ defined by $h(a,b):=2(a+b)+1$.
	\begin{lemma}\label{lemmastorto}
		When $\mathfrak Z_N, \mathcal Z_N<+\infty$ we have
		\begin{equation}\label{piupura}
			\mathbb P^{\mathfrak S, f,g}_{N}(\eta,\zeta)=\mathbb P^{\mathcal S,f/h,gh}_{N}(\eta,\zeta)2^{-N\hat{\nu}_N^1(0,0)}\frac{\mathcal Z_N(f/h,gh)}{\mathfrak Z_N(f,g)}\,.
		\end{equation}
	\end{lemma}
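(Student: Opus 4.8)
The statement compares two Markov-bridge measures associated with the stochastic matrices $\mathfrak S$ and $\mathcal S$, which are related by the near-Doob transform \eqref{FnonF}. The plan is a direct telescoping computation starting from the definition \eqref{bridge}. Write
\[
\mathbb P^{\mathfrak S,f,g}_N(\eta,\zeta)=\frac{f(\eta_1,\zeta_1)\prod_{i=1}^N\mathfrak S_{(\eta_i,\zeta_i),(\eta_{i+1},\zeta_{i+1})}\,g(\eta_{N+1},\zeta_{N+1})}{\mathfrak Z_N(f,g)}\,,
\]
and substitute, for each factor in the product, the identity \eqref{FnonF} read in the form
\[
\mathfrak S_{(a,b),(a',b')}=\mathcal S_{(a,b),(a',b')}\,\frac{h(a',b')}{h(a,b)}\,2^{-\id((a,b)=(0,0))}\,,
\]
where $h(a,b)=2(a+b)+1$. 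This is legitimate on every trajectory of positive probability because the indices of nonzero entries of $\mathfrak S$ and $\mathcal S$ coincide along such trajectories (inspection of the explicit entries, or of \eqref{FnonF} itself, shows the two matrices have the same support).

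**The two cancellations.** The factors $h(a_i,b_i)^{\pm1}$ telescope: the product $\prod_{i=1}^N \frac{h(\eta_{i+1},\zeta_{i+1})}{h(\eta_i,\zeta_i)}$ collapses to $\frac{h(\eta_{N+1},\zeta_{N+1})}{h(\eta_1,\zeta_1)}$. Absorbing $h(\eta_1,\zeta_1)^{-1}$ into $f$ and $h(\eta_{N+1},\zeta_{N+1})$ into $g$ turns the numerator into the numerator of $\mathbb P^{\mathcal S,f/h,gh}_N(\eta,\zeta)$ up to the collected power-of-two prefactor. The exponents in $2^{-\id((\eta_i,\zeta_i)=(0,0))}$ sum over $i=1,\dots,N$ to the number of visits of the trajectory to the state $(0,0)$ among the first $N$ sites; since $\hat\nu^1_N[\eta,\zeta]=\frac{1}{N+1}\sum_{i=1}^{N+1}\delta_{(\eta_i,\zeta_i)}$, that count equals $(N+1)\hat\nu^1_N(0,0)$ minus the contribution of the last site $\id((\eta_{N+1},\zeta_{N+1})=(0,0))$. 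One then checks that, on trajectories of positive probability for these bridges, this boundary correction can be discarded—or is compatible with the convention on $\eta_{N+1}$—so the prefactor is exactly $2^{-N\hat\nu^1_N(0,0)}$ as stated (I would record this as the one place where the boundary term must be handled with care, citing the irrelevance of $\eta_{N+1}$ noted after \eqref{bridge} and the fact that a positive-probability bridge trajectory determines the count up to a bounded additive error that does not affect the exponential rate; if the exact identity is wanted one keeps the term and absorbs it into $g$).

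**Normalizations.** After the numerator identity, divide and multiply by $\mathcal Z_N(f/h,gh)$ to turn the right-hand side into an honest probability $\mathbb P^{\mathcal S,f/h,gh}_N$ times the ratio $\mathcal Z_N(f/h,gh)/\mathfrak Z_N(f,g)$, which is finite and positive precisely under the hypothesis $\mathfrak Z_N,\mathcal Z_N<+\infty$ (here $\mathcal Z_N(f/h,gh)$ is the normalization attached to the transformed endpoint functions, finite because it is obtained from $\mathfrak Z_N(f,g)$ by the same finite manipulation — indeed summing \eqref{piupura} over $(\eta,\zeta)$ both forces the ratio to equal $\big(\sum_{(\eta,\zeta)}\mathbb P^{\mathcal S,f/h,gh}_N\,2^{-N\hat\nu^1_N(0,0)}\big)^{-1}$ and confirms consistency). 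This yields \eqref{piupura}. The only genuine subtlety, as flagged above, is the accounting of the visit at the final site $N+1$ in the exponent of $2$; everything else is a routine telescoping and a relabeling of $f,g$.
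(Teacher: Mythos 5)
Your proof is correct and follows essentially the same route as the paper's: the identity \eqref{FnonF} is telescoped along the trajectory, the boundary factors $h(\eta_1,\zeta_1)^{-1}$ and $h(\eta_{N+1},\zeta_{N+1})$ are absorbed into $f$ and $g$, and the normalizations are adjusted. Your remark about the exact bookkeeping of the visit count at site $N+1$ versus the normalization $N\hat\nu^1_N(0,0)$ (with $\hat\nu^1_N$ normalized by $N+1$) is a legitimate observation of a small imprecision that the paper's terser proof glosses over, and your resolution (a bounded boundary correction, irrelevant at the exponential scale or absorbable into $g$) is the right one.
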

	\begin{proof}
		By a direct computation we have for any fixed $(\eta,\zeta)$
		$$
		\mathbb P^{\mathfrak S}_{N}(\eta,\zeta)=\mathbb P^{\mathcal S}_{N}(\eta,\zeta)\frac{h(\eta_{N+1},\zeta_{N+1})}{h(\eta_1,\zeta_1)}2^{-N\hat{\nu}_N^1(0,0)}\,.
		$$
		Multiplying both sides by $f(\eta_1,\zeta_1)$ and $g(\eta_{N+1},\zeta_{N+1})$ and inserting the constants we get 
		\eqref{piupura}.
	\end{proof}

	\subsection{Large deviations asymptotics}
	In this section we illustrate the strategy to prove a spatial LDP for the Markov measure \eqref{ingrassata} for the matrices \eqref{matrici} and the vectors \eqref{x}, \eqref{y}. We prove first the corresponding result for the simple homogeneous Markov measure with transition $\mathcal S$, and then use Lemma \ref{lemmastorto}
	to deduce the result for the original matrix $\mathfrak S$. 
	
	We will only outline some steps that are technically involved for random walks with boudary, but follow a general line \cite{EK, SW}.
	We will then apply the contraction principle to get the LD rate functional for the boundary driven TASEP obtaining 
	the well-known functional.
	
	\subsubsection{The probabilistic structure}
	
	The simple homogeneous random walk with transition matrix $\mathcal S$ has some special features. Since $\sum_{b'\in \mathbb N_0}\mathcal S_{(a,b),(a',b')}=\frac 12$ for any $a,a'\in\{0,1\}$ and $b\in \mathbb N_0$, we deduce that, under the measure $\mathbb P^{\mathcal S}_N$, the variables $\eta_2^{N+1}$ are i.i.d. all having distribution $\mathcal B_{1/2}$. Moreover, we can construct samples of the Markov measure $\mathbb P_N^{\mathcal S}$ using sequences of i.i.d. random vectors. More precisely, we introduce $(X^I_i,Y^I_i)_{i\in \mathbb N}$ a sequence of i.i.d. random vectors such that
	$$
	\mathbb P\left[(X^I_i,Y^I_i)=(a,b)\right]=\left\{
	\begin{array}{ll}
		\frac 14 & \textrm{if}\ (a,b)=(0,0),(0,-1),(1,0),(1,1)\,,\\
		0 & \textrm{otherwise}\,.
	\end{array}
	\right.
	$$
	We also introduce $(X^B_i,Y^B_i)_{i\in \mathbb N}$ a sequence of i.i.d. random vectors such that
	$$
	\mathbb P\left[(X^B_i,Y^B_i)=(a,b)\right]=\left\{
	\begin{array}{ll}
		\frac 14 & \textrm{if}\ (a,b)=(1,0),(1,1)\,,\\
		\frac 12 & \textrm{if}\ (a,b)=(0,0)\,,\\
		0 & \textrm{otherwise}\,.
	\end{array}
	\right.
	$$
	The two collection of i.i.d. random vectors are also independent of each other.
	The random variables $(X^I_i,Y^I_i)_{i\in \mathbb N}$ have to be used to construct the path of the Markov measure $\mathbb P_N^{\mathcal S}$ in the bulk, while the random variables $(X^B_i,Y^B_i)_{i\in \mathbb N}$ have to be used in the boundary.
	
	We call $\mu^I,\mu^B\in \mathcal M^1\big(\{0,1\}\times\{-1,0,1\}\big)$ the distribution of the pair $(X^I_i,Y^I_i)$ and $(X_i^B,Y_i^B)$, respectively; namely,
	\[
	\left\{
	\begin{array}{l}
		\mu^I_{0,0}=\mu^I_{0,-1}=\mu^I_{1,0}=\mu^I_{1,1}=\frac 14 \,,\\
		\mu^I_{0,1}=\mu^I_{1,-1}=0\,,
	\end{array}
	\right.
	\qquad
	\left\{
	\begin{array}{l}
		\mu^B_{1,0}=\mu^B_{1,1}=\frac 14 \,,\\
		\mu^B_{0,0}=\frac 12 \,,\\
		\mu^B_{1,-1}=\mu^B_{0,1}=0  \,.
	\end{array}
	\right.
	\]
	We iteratively define the sequence of random variables $(\eta,\zeta)$ distributed according to $\mathbb P_N^{\mathcal S}$. The variables $(\eta_1,\zeta_1)$ are fixed and we generate $\zeta_2$ as follows. If $(\eta_1,\zeta_1)=(0,0)$ then $\zeta_2=0$; otherwise $\zeta_2=\zeta_1$ or $\zeta_2=\zeta_1+2\eta_1-1$ with equal probability and independently of the sequences of i.i.d. vectors. Once that $\zeta_2$ is determined we generate the whole sequence as follows.
	For any $i>1$ we have that, when $\zeta_i>0$ then $(\eta_i,\zeta_{i+1}-\zeta_i)=(X^I_i,Y^I_i)$, when instead $\zeta_i=0$ then 
	$(\eta_i,\zeta_{i+1}-\zeta_i)=(X^B_i,Y^B_i)$. By a direct inspection it is possible to see that the sequence
	$(\eta_i,\zeta_i)_{i=1}^{N+1}$, constructed in this way, has law $\mathbb P_N^{\mathcal S}$.
	In the case of an initial condition with the variables $(\eta_1,\zeta_1)$ distributed as $\mathcal B_{1/2}(\eta_1)\delta_{\zeta_1}$ the whole collection of random variables can be constructed by the same iteration. In this case, as input to start the iteration, we only need to assign the value $\zeta_1$. In the Figure \ref{tab} the diagram shows how to construct iteratively the variables starting form $\zeta_1$ and the arrows exiting from each $\zeta_i$ show which are the variables that can be constructed directly using the i.i.d. sequences. As follows from the diagram in the Figure \ref{tab}, the sequence $\zeta$ alone is Markovian; this property is related to the representation in terms of mixtures of inhomogeneous product measures in \cite{GI1}.
	\begin{figure}
		\begin{tikzpicture}[node distance=1.4cm]
			\node (start1) [startstop] {$\zeta_1$};
			\node (in1) [io, above of=start1] {$\eta_1$};
			\node (start2) [startstop, right of=start1] {$\zeta_2$};
			\draw [arrow] (start1) -- (start2);
			\draw [arrow] (start1) -- (in1);
			\node (start3) [startstop, right of=start2] {$\zeta_3$};
			\node (in2) [io, above of=start2] {$\eta_2$};
			\node (start4) [startstop, right of=start3] {$\zeta_4$};
			\node (in3) [io, above of=start3] {$\eta_3$};
			\node (in4) [io, above of=start4] {$\eta_4$};
			\draw [arrow] (start2) -- (start3);
			\draw [arrow] (start2) -- (in2);
			\draw [arrow] (start3) -- (start4);
			\draw [arrow] (start3) -- (in3);
			
			\node (start5) [startstop,right of=start4] {$\zeta_5$};
			\node (in5) [io, above of=start5] {$\eta_5$};
			\node (start6) [startstop, right of=start5] {$\zeta_6$};
			\draw [arrow] (start5) -- (start6);
			\draw [arrow] (start5) -- (in5);
			\node (start7) [startstop, right of=start6] {$\zeta_7$};
			\node (in6) [io, above of=start6] {$\eta_6$};
			\node (start8) [startstop, right of=start7] {$\zeta_8$};
			\node (in7) [io, above of=start7] {$\eta_7$};
			\node (in8) [io, above of=start8] {$\eta_8$};
			\draw [arrow] (start6) -- (start7);
			\draw [arrow] (start6) -- (in6);
			\draw [arrow] (start7) -- (start8);
			\draw [arrow] (start7) -- (in7);
			\draw [arrow] (start4) -- (start5);
			\draw [arrow] (start4) -- (in4);
			\draw [arrow] (start8) -- (in8);
			\draw [arrow] (start8) -- (11.2,0);
		\end{tikzpicture}
		\caption{The dependency structure of the random variables. The arrows exiting from $\zeta_k$ indicate that the variables $\eta_k$ and $\zeta_{k+1}$ can be constructed using the i.i.d variables.}
		\label{tab}
	\end{figure}
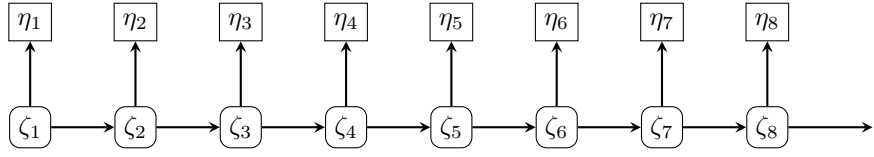
	
	\subsubsection{The empirical observables}
	
	It is convenient to study the large deviations asymptotic in terms of some empirical observables that we are going to introduce. First of all we consider two empirical measures defined by
	\begin{equation}\label{sanmartinoB}
		\begin{cases}
			\hat{\Pi}^B_N[\eta,\zeta]:=\frac{1}{N}\sum_{i=1}^N\id(\zeta_i=0)\delta_{(\eta_i,\tilde\zeta_i)}\delta_{i/N}\\
			\hat{\Pi}^I_N[\eta,\zeta]:=\frac{1}{N}\sum_{i=1}^N\id(\zeta_i>0)\delta_{(\eta_i,\tilde\zeta_i)}\delta_{i/N}\,,
		\end{cases}
	\end{equation}
	where we use the random variables $\tilde\zeta_i:=\zeta_{i+1}-\zeta_i$. The two empirical measures in \eqref{sanmartinoB} are random elements of
	$\mathcal M\Big([0,1]; \mathcal M^+\big(A\times\{-1,0,+1\}\big)\Big)$. Some other natural empirical measures that can be constructed starting from \eqref{sanmartinoB} are
	\begin{equation}\label{sanmartino}
		\hat{\Pi}_N[\eta,\zeta]:=\frac{1}{N}\sum_{i=1}^N\delta_{(\eta_i,\tilde\zeta_i)}\delta_{i/N}\in \mathcal M\Big([0,1]; \mathcal M^+\big(A\times\{-1,0,+1\}\big)\Big)\,,
	\end{equation}
	and 
	\begin{equation}\label{piB}
		\hat{\pi}^B_N[\zeta]:=\frac{1}{N}\sum_{i=1}^N\id(\zeta_i=0)\delta_{i/N}\in \mathcal M^+([0,1])\,.
	\end{equation} 
	The empirical measure \eqref{sanmartino} can be obtained by
	$\hat{\Pi}_N=\hat{\Pi}_N^B+\hat{\Pi}_N^I$ while the empirical measure \eqref{piB} is obtained by
	$\hat{\pi}^B_N=\sum_{a}\sum_{b}\hat{\Pi}^B_N\left(a, b\right)$, where, to better explain again the notation, $\hat{\Pi}^B_N \in \mathcal M\Big([0,1]; \mathcal M^+\big(A\times\{-1,0,+1\}\big)\Big)$ is given by $\left(\hat{\Pi}^B_N\left(a,b\right)\in \mathcal M\Big([0,1]; \mathbb R\Big)\right)_{a\in A}^{ b=-1,0,+1}$. We use similar notation for other measures.
	
	The above empirical measures depend on the variable $\eta,\zeta$ and we underline such a dependence inside squared parenthesis $[\cdot ]$. 
	
	The empirical measure \eqref{ems} associated with the configuration of the TASEP model is obtained starting from \eqref{sanmartino} by
	\begin{equation}\label{imp}
		\hat \pi_N[\eta]=\sum_{a}\sum_{ b}\hat \Pi_N[\eta,\zeta]\left(a, b\right)a\,;
	\end{equation}
	our goal is to deduce a LDP for \eqref{imp}
	by contraction from a LD on a larger state space. 
	
	It will be useful to consider the following relation for the empirical measure appearing in Lemma \ref{lemmastorto}
	\begin{equation}\label{bunny}
		\hat \nu_N(0,0)=\int_{[0,1]} \sum_a \sum_{ b}(1-a)\hat \Pi_N^B(a,b; dx)\,.
	\end{equation}
	
	We introduce a last empirical object and then proceed to describe their continuous counterpart, scaling limits and large deviations. This is a piecewise linear path $\hat z_N=\left(\hat z_N(x)\right)_{x\in[0,1]}$ that is defined setting $\hat z_N\left(\frac iN\right)=\frac{\zeta_{i+1}}{N}$, $i=0, \dots , N$ and then defining it at the remaining points by linear interpolation. We have the following relation
	\begin{equation}\label{calcint}
		\hat z_N\left(\frac jN\right)-\hat z_N\left(\frac iN\right)=\int_{(i/N,j/N]}\sum_{a=0,1}\sum_{ b=-1,0,1}\hat{\Pi}_N(a,b; dx)b\,.
	\end{equation}
	Note that if $i=0, j=N$ we have on the left hand side above $\hat z_N(1)-\hat z_N(0)$ and on the right hand side
	the integral with respect to the whole interval giving a continuous correspondence.
	
	\smallskip
	
	We are interested in studying the LD for the joint empirical variables 
	\begin{equation}\label{empvar}
		\left(\hat z_N,\hat{\Pi}_N^B,\hat{\Pi}_N^I\right)\in C([0,1])\times\mathcal M\Big([0,1]; \mathcal M^+\big(A\times\{-1,0,+1\}\big)\Big)^2 \,,
	\end{equation}
	when $(\eta,\xi)$ are distributed according to the measure \eqref{ingrassata} with matrices and vectors defined in \eqref{matrici}, \eqref{y}, \eqref{x}, and where we consider the space of continuous functions with the topology induced by the supremum norm and the product space with the product topology.
	
    Let $\mathbb P^{\mathcal S,*}_N$ denote the Markov measure with initial distribution $\mathcal B_{1/2}(\eta_1)\delta_{\zeta_1}$. Since the probability of each sequence $(\eta,\zeta)$ depends only on the empirical measures in \eqref{sanmartinoB}, the measure $\mathbb P^{\mathcal S,*}_N$ can be compactly written as
	\begin{equation}\label{bellama}
		\mathbb P^{\mathcal S,*}_N=\prod_{a,b}\left[\mu^B(a,b)^{\left(N\int_{[0,1]}\,,\hat{\Pi}^B_N(a,b)\right)}
		\mu^I(a,b)^{\left(N\int_{[0,1]}d\,\hat{\Pi}^I_N(a,b)\right)}\right]\,,
	\end{equation}
	where we recall that in the product $a\in\{0,1\}$ and $b\in\{-1,0,1\}$.

	According to the definition \eqref{bridge}, as discussed at the end of Section \ref{sec:bridge}, measure \eqref{ingrassata} corresponds to the Markov bridge $\mathbb P^{\mathfrak S,f,g}$ associated with the transition matrix $\mathfrak S$ and the two functions $f,g$ determined by the vectors $\ket y$ and $\ket x$ and defined by 
	\begin{equation}\label{fgT}
		\left\{
		\begin{array}{l}
			f(a,b):=\frac{(1-\alpha)^{b}(2a+2b+1)}{4\alpha^{b}}\,,\\
			g(a,b):=\frac{4(1-\beta)^{b}}{\beta^{b}(2a+2b+1)}\,.
		\end{array}
		\right.
	\end{equation} 
	We will then deduce the LD for the empirical measure \eqref{ems} when $\eta$ is distributed according to the invariant measure of the TASEP by contractions.

	We will use Lemma \ref{lemmastorto} and deduce LD for $\mathbb P^{\mathfrak S,f,g}$ from LD for $\mathbb P^{\mathcal S,f/h,gh}_{N}$. By the expression
	\eqref{fgT} we have that the functions $f/h,gh$ do not depend on the variable $a$ so that we have
	\begin{equation}\label{bridgecompact}
		\mathbb P^{\mathcal S,f/h,gh}_{N}(\eta,\zeta)=\left(\frac{1-\alpha}{\alpha}\right)^{\zeta_1}\mathbb P^{\mathcal S,*}_N(\eta,\zeta)\left(\frac{1-\beta}{\beta}\right)^{\zeta_{N+1}} \left(\mathcal Z_N\right)^{-1}\,,
	\end{equation}
	where $\mathcal Z_N$ is the same normalization constant appearing in \eqref{piupura}.

	\subsubsection{The rate functionals}

	We now describe some joint LD rate functionals associated with the empirical observables we described before when the variables $(\eta,\zeta)$ are distributed according to different measures. The LD rate functionals are defined on the space where the empirical triple of observables lives, namely the one introduced in \eqref{empvar}. The LD rate functionals are concentrated on absolutely continuous paths and measures; this follows from the fact that for any configuration $(\eta,\zeta)$, any $(a,b)\in \{0,1\}\times \{-1,0,1\}$ and any continuous function $G$ we have
	$$
	\left|\int_{[0,1]} \Pi_N(a,b;dx)G(x)\right|\leq \frac{1}{N}\sum_{i=1}^N\left|G(i/N)\right|\,,
	$$
	so that any limit measure $\Pi$ satisfies $\left|\int_{[0,1]} \Pi(a,b;dx)G(x)\right|\leq \int_{[0,1]}\left|G(x)\right|dx$ and moreover
	$$
	\mathbb P^{\mathcal S,*}_N\left(\left|\int_{[0,1]} \Pi_N(a,b;dx)G(x)\right|\geq \int_{[0,1]}\left|G(x)\right|+\epsilon\right)=0\,,
	$$
	for $N$ large enough.
	This simple estimate, and similar ones for any empirical measure, allows to show that the LD rate functional is identically $+\infty$ on measures and paths that are not absolutely continuous with respect to the Lebesgue measure or whose density is not bounded by one. We therefore only need to show the form of the rate functional on the closed subset of absolutely continuous measures with densities bounded by one. Moreover, by relation \eqref{calcint} we have
	\begin{equation}\label{pace}
		\mathbb P^{\mathcal S,*}_N\left(\sup_{x,y}\left|\hat z_N\left(x\right)-\hat z_N\left(y\right)-\int_{[x,y]}\sum_{a=0,1}\sum_{ b=-1,0,1}\hat{\Pi}_N(a,b; dz)b\right|>\epsilon\right)=0\,, 
	\end{equation}
	for any $\epsilon$ and $N$ big enough.
	We deduce, therefore, that the rate functional has a domain that is concentrated on the closed set of absolutely continuous measures and paths where the following relations are satisfied.
	
	\smallskip
	
	Consider an absolutely continuous triple 
	$$\left(z, \Pi^B,\Pi^I\right)\in C([0,1])\times\mathcal M\Big([0,1]; \mathcal M^+\big(A\times\{-1,0,+1\}\big)\Big)^2\,,$$
	we will use the same notation for an absolutely continuous measure and its density.
	We write $\Pi^B= m(x)\nu^B (x)dx$, and $\Pi^I= (1-m(x))\nu^I (x)dx$  where $\nu^B(x), \nu^I(x)$ are space dependent probability measures on $\{0,1\}\times \{-1,0,+1\}$ and $0\leq m(x)\leq 1$. 
	
	This means that $\nu^B(x)=\left(\nu^B_{a, b}\right)_{a=0,1}^{ b=-1,0,+1}$ and $\sum_{a,b}\nu^B_{a,b}(x)=1$ and $\nu^I(x)=\left(\nu^I_{a, b}\right)_{a=0,1}^{ b=-1,0,+1}$ and $\sum_{a, b}\nu^I_{a, b}(x)=1$ for a.e. $x\in[0,1]$. 
	As in \eqref{sanmartino} we have 
	$$\Pi(x)dx=\Pi^B(x)dx+\Pi^I(x)dx=\big[m(x)\nu^B (x)+(1-m(x))\nu^I (x)\big]dx\,.$$ 
	It is possible to write in this form $\Pi^B$ and $\Pi^I$ since the density of $\sum_{a,b}\Pi(a,b)$ is identically one. For any $x$, the support of $\nu^B(x)$ is contained in that of $\mu^B$ and the support of $\nu^I(x)$ is contained in that of $\mu^I$.

	The continuous counterpart of \eqref{piB} is given by
	$$
	m(x)dx=\sum_{a, b}\Pi^B_{a, b}(x) dx\,.
	$$
	By \eqref{pace}, we deduce that the rate functional is finite only when the following relation is satisfied a.e.
	\begin{align}
		\dot z(x)&=\sum_a\sum_{ b}\Pi_{a, b}(x)bdx=\Pi_{1,1}(x)-\Pi_{0,-1}(x) \nonumber \\
		&= m(x)\nu^B_{1,1}(x)+(1-m(x))\left(\nu^I_{1,1}(x)-\nu^I_{0,-1}(x)\right) \,.\label{diff}
	\end{align}
	Finally, we call $\rho(x)dx$ the continuous counterpart of the empirical measure \eqref{imp} and we have
	$\rho(x)=\Pi_{1,1}(x)+\Pi_{1,0}(x)$.

	\smallskip 
	
	We summarize the basic constraints such that, when they are violated, the LD rate functionals we are going to determine are identically $+\infty$.
	
	\begin{itemize}
		\item [A)] The triple $\left(z, \Pi^B,\Pi^I\right)$ is absolutely continuous and $\Pi^B= m(x)\nu^B (x)dx$, and $\Pi^I= (1-m(x))\nu^I (x)dx$ for $x$ dependent probability measures $\nu^B, \nu^I$ and some $0\leq m(x)\leq 1$.
		\item [B)] The relation \eqref{diff} is satisfied a.e.
		\item [C)] The constraint $z(x)\geq 0$ is satisfied. 
		\item [D)] It holds $m(x)=0$ when $z(x)>0$. 
	\end{itemize} 
	Given $\nu$, $\mu$ two probability measures on $\{0,1\}\times \{-1,0,+1\}$ we call 
	$$h(\nu|\mu)=\left\{
	\begin{array}{ll}
		\sum_{a,b}\nu_{a, b}\log\frac{\nu_{a,b}}{\mu_{a,b}} & \nu \abscont \mu\,, \\
		+\infty & \textrm{otherwise}\,,
	\end{array}
	\right.
	$$ 
	their relative entropy. 
	
	\begin{proposition}\label{euno}
		Under  the initial condition $\mathcal B_{1/2}(\eta_1)\delta_{Nz_0}(\zeta_1)$, when $(\eta,\zeta)$ are distributed according to $\mathbb P^{\mathcal S,*}_N$ in \eqref{bellama},  the triple $\left(\hat z_N,\hat{\Pi}_N^B,\hat{\Pi}_N^I\right)$   satifies a LD principle with rate functional $\mathcal I^*$, that is $+\infty$ if at least one among the conditions A, B, C, D and $z(0)\neq z_0$ is violated, and it is otherwise equal to
		\begin{equation}\label{Istar}
			\mathcal I^*\Big(z,\Pi^B,\Pi^I\Big)=\int_0^1\Big[m(x)h\left(\nu^B(x)|\mu^B\right)+(1-m(x))h\left(\nu^I(x)|\mu^I\right)\Big]dx\,.
		\end{equation}
	\end{proposition}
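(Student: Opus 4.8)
The plan is to read Proposition \ref{euno} as a spatially resolved Sanov/Mogulskii principle for a random walk on $\{0,1\}\times\mathbb N_0$ with state‑dependent increments and reflection at the origin. Indeed, by \eqref{bellama}, under $\mathbb P^{\mathcal S,*}_N$ the pairs $(\eta_i,\tilde\zeta_i)$ are conditionally independent given the boundary indicator process $\big(\id(\zeta_i=0)\big)_i$, with conditional law $\mu^B$ where $\zeta_i=0$ and $\mu^I$ where $\zeta_i>0$, while the boundary indicator is itself a deterministic functional of the increments through the reflecting walk $\zeta$. So I would follow the general scheme for path‑space large deviations of random walks with a boundary (cf.\ \cite{EK,SW}), outlining only the steps peculiar to the boundary. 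Exponential tightness is immediate: every $\hat z_N$ is $1$‑Lipschitz and each empirical measure in \eqref{empvar} is supported on the compact set of sub‑probability densities bounded by $1$, by the deterministic estimate $\big|\int_{[0,1]}\Pi_N(a,b;dx)G(x)\big|\le\frac1N\sum_{i=1}^N|G(i/N)|$ already recorded. Hence a large deviation principle holds along subsequences and it remains to identify the rate functional.

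Next I would check that any candidate rate functional is $+\infty$ off the set cut out by A, B, C, D together with $z(0)=z_0$, reducing the problem to that closed set. Condition A follows from the uniform bound just quoted; B from the deterministic relation \eqref{calcint} and the estimate \eqref{pace}; C from $\zeta\ge0$, whence $\hat z_N\ge0$; D because $\hat z_N(x)>0$ on an interval forces $\zeta_i>0$ on the corresponding block of indices, so $\hat\Pi^B_N$ carries no mass there, the only competing scenario — a macroscopic excursion returning $\zeta$ to $0$ — being itself visible in $\hat z_N$; finally $z(0)=z_0$ is imposed by the initial law $\mathcal B_{1/2}(\eta_1)\delta_{Nz_0}(\zeta_1)$. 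On this set $\hat z_N$ is determined up to $o(1)$ by $(\hat\Pi^B_N,\hat\Pi^I_N)$ through \eqref{calcint}, with super‑exponentially small error by \eqref{pace}, so it may be handled by exponential equivalence and contraction once the principle for the pair $(\hat\Pi^B_N,\hat\Pi^I_N)$ is in hand.

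The heart of the argument is a local estimate followed by patching. Partition $[0,1]$ into $K$ equal blocks and freeze $(z,m,\nu^B,\nu^I)$ to constants on each; since $\zeta$ alone is Markov (Figure \ref{tab}) the blocks decouple up to endpoint corrections, and standard sub/super‑additivity across the resulting independent pieces reduces matters to a local principle on $[s,s+1/K]$, conditionally on $\hat z_N(s)\approx z_s$, with cost $\tfrac1K\big[m\,h(\nu^B|\mu^B)+(1-m)h(\nu^I|\mu^I)\big]$. If $z_s>0$, the optimal trajectories keep $\zeta>0$ throughout the block — any excursion to $0$ is separately exponentially costly and would force $m>0$, excluded by D when $z>0$ — so the increments are genuinely i.i.d.\ $\mu^I$ and the estimate is the standard joint Sanov bound $\mathbb P[\text{local composition}\approx\nu^I]\asymp\exp(-\tfrac NK h(\nu^I|\mu^I))$, with the path slaved to $\dot z=\nu^I_{1,1}-\nu^I_{0,-1}$ via \eqref{calcint}. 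If $z_s=0$, the trajectory splits into maximal sojourns at $0$ (increments i.i.d.\ $\mu^B$) separated by excursions away from $0$ (increments i.i.d.\ $\mu^I$); a prescribed triple $(m,\nu^B,\nu^I)$ is realizable precisely when these excursions can open and close, i.e.\ when the bulk increments sum to zero over the block, which is exactly constraint B with $\dot z=0$, and, granted realizability, the number of block‑trajectories with the prescribed composition is $\exp\!\big(\tfrac NK[m\,H(\nu^B)+(1-m)H(\nu^I)]+o(N)\big)$, with $H$ the Shannon entropy, by an explicit gluing of short excursions; multiplying by the trajectory weight read off from \eqref{bellama} gives the announced local cost. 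Summing the local estimates, letting $K\to\infty$ and invoking lower semicontinuity produces \eqref{Istar}.

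I expect the main obstacle to be the second case of the local estimate: reconciling the \emph{random} boundary/bulk designation with the path. One must show that prescribing the occupation fraction $m$ of $\{0\}$ together with the two local compositions is compatible with an honest reflecting trajectory exactly when constraint B holds, and that imposing this compatibility does not shrink the count of admissible trajectories by an exponential factor — the trivial inequality gives only the upper bound, and the matching lower bound needs the excursion‑gluing construction. This is the technical content specific to walks with a boundary, for which I would only sketch the construction and refer to \cite{EK,SW}; the remaining ingredients — exponential tightness, block decoupling, closedness of A–D, and the free Sanov bound — are routine. (The passage from $\mathbb P^{\mathcal S,*}_N$ to the genuine TASEP bridge is a separate step, performed afterwards via Lemma \ref{lemmastorto} and \eqref{bridgecompact}.)
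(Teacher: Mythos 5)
Your proposal is correct in outline but follows a genuinely different route from the paper's. You prove the local estimate by block coarse--graining in the spirit of Mogulskii: partition $[0,1]$, decouple blocks via the Markov property of $\zeta$, and on each block perform a direct combinatorial count of trajectories with prescribed boundary/bulk compositions, handling the reflecting boundary by an excursion--gluing construction. The paper instead never counts trajectories: it exploits the fact that $\mathbb P^{\mathcal S,*}_N$ in \eqref{bellama} is an explicit function of the pair $(\hat\Pi^B_N,\hat\Pi^I_N)$, introduces the space--dependent exponential tilts $\mathbb P^{\gamma^I,\gamma^B}_N$ in \eqref{genperturb} with Radon--Nikodym derivative \eqref{RN}, proves a law of large numbers for the tilted walk (Proposition \ref{derby}, where the boundary is absorbed into the fluid--limit ODE \eqref{intreno} and the explicit formula for $m(x)$), and then gets the lower bound from the LLN plus the relative--entropy identity (via the $\Gamma$--convergence result of \cite{M}) and the upper bound from the Chernoff/minimax argument. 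The trade--off is clear: your approach is more elementary and self--contained but concentrates all the difficulty in the excursion--gluing lower bound on boundary blocks, which you correctly identify as the hard step and only sketch (note also a small imprecision there: the realizability condition is that the \emph{total} displacement over a boundary block vanishes, including the up--steps $m\,\nu^B_{1,1}$ contributed by $\mu^B$, not just the bulk increments --- your subsequent appeal to constraint B with $\dot z=0$ is the correct statement); the paper's change--of--measure route sidesteps that combinatorics entirely, at the price of having to establish the perturbed hydrodynamic limit with boundary and to justify the minimax exchange and the identification of the lower--semicontinuous envelope. Since the paper itself only outlines its proof at a comparable level of rigor, your alternative is an acceptable, genuinely different strategy rather than a gap.
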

	We remark that the dependence on the path $z$ in the rate functional appears through the constraint on the initial condition and the conditions B, C
	and D. Starting from Proposition \ref{euno} we can deduce the following LD rate functional whose proof is obtained by just applying the Varadhan lemma.
	
	\begin{lemma}\label{edue}
		When the variables $(\eta,\zeta)$ are distributed according to the Markov bridge \eqref{bridgecompact}, then the triple $\left(\hat z_N,\hat{\Pi}_N^B,\hat{\Pi}_N^I\right)$  satisfies a LD principle with a rate functional that is $+\infty$ if at least one among the constraints A, B, C and D is violated, and it is otherwise given by
		\begin{equation}
			\mathcal I^{\mathcal S}\Big(z,\Pi^B,\Pi^I\Big)=z(0)\log\frac{\alpha}{1-\alpha}+z(1)\log\frac{\beta}{1-\beta}+ \mathcal I^*\Big(z,\Pi^B,\Pi^I\Big)+c\,,
		\end{equation}
		where $c$ is a suitable constant and $\mathcal I^*$ is the rate functional in \eqref{Istar}.
	\end{lemma}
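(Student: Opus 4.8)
The plan is to read Lemma \ref{edue} as an exponential tilt of the law appearing in Proposition \ref{euno} and to conclude by Varadhan's lemma. By \eqref{bridgecompact}, the bridge $\mathbb{P}^{\mathcal{S},f/h,gh}_N$ is obtained from $\mathbb{P}^{\mathcal{S},*}_N$ by reweighting each trajectory by $\left(\tfrac{1-\alpha}{\alpha}\right)^{\zeta_1}\left(\tfrac{1-\beta}{\beta}\right)^{\zeta_{N+1}}$ and renormalizing by $\mathcal{Z}_N$. Since $\hat z_N(0)=\zeta_1/N$ and $\hat z_N(1)=\zeta_{N+1}/N$, this reweighting equals $\exp\big(NF(\hat z_N)\big)$ with
\[
F(z,\Pi^B,\Pi^I):=z(0)\log\tfrac{1-\alpha}{\alpha}+z(1)\log\tfrac{1-\beta}{\beta}
\]
a functional on $C([0,1])\times\mathcal{M}\big([0,1];\mathcal{M}^+(A\times\{-1,0,1\})\big)^2$. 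The functional $F$ is continuous for the product topology, being affine in the evaluation maps $z\mapsto z(0)$ and $z\mapsto z(1)$, which are continuous for the supremum norm; and on the support of $\mathbb{P}^{\mathcal{S},*}_N$ it is uniformly bounded, since there $\zeta_1$ is the deterministic value $Nz_0$ and the increments $\tilde\zeta_i=\zeta_{i+1}-\zeta_i$ lie in $\{-1,0,1\}$, whence $|\hat z_N(1)-z_0|\le 1$. In particular the integrability hypothesis of Varadhan's lemma holds trivially.

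I would then split the tilt according to the two endpoints. Conditionally on the starting pair, i.e.\ with $z_0:=\zeta_1/N$ fixed, the bridge is the law $\mathbb{P}^{\mathcal{S},*}_N$ tilted only by $\exp\big(N\hat z_N(1)\log\tfrac{1-\beta}{\beta}\big)$, while the initial distribution additionally carries the weight $\exp\big(N z_0\log\tfrac{1-\alpha}{\alpha}\big)$. Applying Varadhan's lemma to the LDP of Proposition \ref{euno} (with good rate functional $\mathcal{I}^*$ already encoding the constraint $z(0)=z_0$ together with A, B, C, D) and the continuous bounded functional $z\mapsto z(1)\log\tfrac{1-\beta}{\beta}$ gives, for each fixed $z_0$, an LDP for the $\zeta_{N+1}$-tilted measure with rate functional $\mathcal{I}^*(z,\Pi^B,\Pi^I)+z(1)\log\tfrac{\beta}{1-\beta}+c_1(z_0)$. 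A Laplace integration over the admissible values $z_0=\zeta_1/N$, each weighted by $\exp\big(N z_0\log\tfrac{1-\alpha}{\alpha}\big)$, then selects $z_0=z(0)$ because of the hard constraint $z(0)=z_0$ in the domain of $\mathcal{I}^*$: this produces the additional term $z(0)\log\tfrac{\alpha}{1-\alpha}$, removes the now-spurious constraint $z(0)=z_0$, and leaves exactly A, B, C, D. Collecting the terms and renaming the normalization constant $c$ we obtain $\mathcal{I}^{\mathcal{S}}$ as stated, with $c=\sup\big(F-\mathcal{I}^*\big)$, finite since $F$ is bounded and $\mathcal{I}^*\ge 0$.

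The one point that requires care — and the main obstacle — is precisely the bookkeeping of the initial condition: one must ensure that the family $(\mathbb{P}^{\mathcal{S},*}_N)_{z_0}$ obeys the LDP of Proposition \ref{euno} uniformly enough in $z_0$ for the Laplace step over the discrete set $\{\zeta_1/N\}$ to yield no more than the claimed variational term. This is handled by the usual combination of exponential tightness (automatic here from the good rate functional and the a priori bound $|\hat z_N(1)-\hat z_N(0)|\le 1$) and a covering argument in $z_0$, after which everything reduces to the version of Varadhan's lemma recalled in the Appendix and to the elementary tilting calculus for large deviations. The remaining verifications — continuity of the evaluation functionals, measurability, and the identification of the constraints — are routine.
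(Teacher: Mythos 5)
Your proposal is correct and takes essentially the same route as the paper, whose entire proof of this lemma is the remark that \eqref{bridgecompact} is an exponential tilt of $\mathbb{P}^{\mathcal S,*}_N$ by $\exp\{N[\hat z_N(0)\log\tfrac{1-\alpha}{\alpha}+\hat z_N(1)\log\tfrac{1-\beta}{\beta}]\}$ followed by an appeal to Varadhan's lemma. Your extra bookkeeping — splitting the tilt at the two endpoints and Laplace-integrating over the initial value $z_0$ against the fixed-start LDP of Proposition \ref{euno} — is exactly the step the paper glosses over, and is the right way to make that appeal rigorous.
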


	By Lemma \ref{lemmastorto} we can now deduce the LD for the  Markov bridge $\mathbb P^{\mathfrak S, f,g}_{N}$, again using Varadhan lemma with respect to the continuous map \eqref{bunny}. By the structure of support of the measure $\mu^B$ the map on the right hand side of \eqref{bunny} for the absolutely continuous triple is given by $\int_0^1m(x)\nu^B_{0,0}(x)dx$. We therefore have the following.
	
	\begin{lemma}\label{etre}
		When the variables $(\eta,\zeta)$ are distributed according to the Markov bridge $\mathbb P^{\mathfrak S, f,g}_{N}$, then the triple $\left(\hat z_N,\hat{\Pi}_N^B,\hat{\Pi}_N^I\right)$ satisfies a LD principle with rate functional that is $+\infty$ if at least one of the constraints A, B, C and D are violated and it is otherwise given by
		
		\begin{equation}
			\mathcal I^{\mathfrak S}\Big(z,\Pi^B,\Pi^I\Big)=\mathcal I^{\mathcal S}\Big(z,\Pi^B,\Pi^I\Big)+ \left[\int_0^1m(x)\nu^B_{0,0}(x)dx\right]\log 2+C\,,
		\end{equation}
		where $C$ is a suitable constant.
	\end{lemma}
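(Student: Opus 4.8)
Here is a plan for proving Lemma~\ref{etre}.

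\smallskip

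The plan is to realize $\mathbb P^{\mathfrak S,f,g}_{N}$ as a \emph{bounded} exponential tilt of $\mathbb P^{\mathcal S,f/h,gh}_{N}$ and then invoke Varadhan's lemma. Starting from \eqref{piupura} in Lemma~\ref{lemmastorto}, for every admissible trajectory $(\eta,\zeta)$ one has
\[
\mathbb P^{\mathfrak S,f,g}_{N}(\eta,\zeta)=\mathbb P^{\mathcal S,f/h,gh}_{N}(\eta,\zeta)\,e^{-N\hat\nu_N^1(0,0)\log 2}\,\frac{\mathcal Z_N(f/h,gh)}{\mathfrak Z_N(f,g)}\,,
\]
and, since both sides are probability measures, the deterministic prefactor equals $1/\mathbb E_{\mathbb P^{\mathcal S,f/h,gh}_N}\!\big[e^{-N\hat\nu_N^1(0,0)\log 2}\big]$. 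Hence $\mathbb P^{\mathfrak S,f,g}_{N}$ is precisely the tilt of $\mathbb P^{\mathcal S,f/h,gh}_{N}$ by the functional $(\eta,\zeta)\mapsto e^{-N\hat\nu_N^1(0,0)\log 2}$, with the prefactor playing the role of the tilted normalization.

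\smallskip

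Next I would check that $\hat\nu_N^1(0,0)$ is, asymptotically, a bounded continuous function of the empirical triple \eqref{empvar}. By \eqref{bunny} the quantity $\int_{[0,1]}\sum_a\sum_b(1-a)\,\hat\Pi^B_N(a,b;dx)$ is a function of $\hat\Pi^B_N$ alone; it is continuous for the weak topology, since $\Pi\mapsto\Pi(a,b;[0,1])$ is continuous, and bounded by one. The quantity $\hat\nu_N^1(0,0)$ appearing in Lemma~\ref{lemmastorto} differs from this by an $O(1/N)$ boundary term (coming from the $N$ versus $N+1$ normalization and the use of $\tilde\zeta_i=\zeta_{i+1}-\zeta_i$ in \eqref{sanmartinoB}), so the two tilts differ by a factor bounded away from $0$ and $\infty$, irrelevant for the large deviation scale. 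On an absolutely continuous triple satisfying A--D, writing $\Pi^B=m(x)\nu^B(x)\,dx$ and using $\supp\nu^B(x)\subseteq\supp\mu^B=\{(0,0),(1,0),(1,1)\}$, only the $a=b=0$ term survives and the functional evaluates to $\int_0^1 m(x)\nu^B_{0,0}(x)\,dx$, exactly the expression in the statement.

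\smallskip

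Finally I would apply Varadhan's lemma. Lemma~\ref{edue} provides the LDP for $(\hat z_N,\hat\Pi^B_N,\hat\Pi^I_N)$ under $\mathbb P^{\mathcal S,f/h,gh}_{N}$ with good rate functional $\mathcal I^{\mathcal S}$, finite only when A--D hold. Since the tilt is by a bounded continuous functional, Varadhan's lemma applies with no exponential moment hypothesis and gives an LDP under $\mathbb P^{\mathfrak S,f,g}_{N}$ with rate functional
\[
\mathcal I^{\mathfrak S}\big(z,\Pi^B,\Pi^I\big)=\mathcal I^{\mathcal S}\big(z,\Pi^B,\Pi^I\big)+\Big[\int_0^1 m(x)\nu^B_{0,0}(x)\,dx\Big]\log 2+C\,,
\]
where $C=-\inf\big\{\mathcal I^{\mathcal S}+[\int_0^1 m\,\nu^B_{0,0}\,dx]\log 2\big\}$ is finite because $\mathcal I^{\mathcal S}$ is a good rate functional and the added term is bounded; this is the constant of the statement. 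Finiteness of the tilt everywhere leaves the effective domain unchanged, so $\mathcal I^{\mathfrak S}=+\infty$ exactly when one of A, B, C, D fails.

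\smallskip

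The bookkeeping above is routine; the one point deserving care is the ``continuity'' step, namely verifying rigorously that the discrepancies between $\hat\nu^1_N(0,0)$ and the $\hat\Pi^B_N$-functional of \eqref{bunny} are negligible and that weak convergence of $\hat\Pi^B_N$ commutes with integration against the constant function on $[0,1]$ — this last point being exactly where the support structure of $\mu^B$ is used already at the discrete level. Everything else is a transcription of Lemma~\ref{lemmastorto} and a standard tilting argument.
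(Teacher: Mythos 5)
Your proposal is correct and follows essentially the same route as the paper: the authors also obtain Lemma \ref{etre} by combining the identity of Lemma \ref{lemmastorto} with Varadhan's lemma applied to the (bounded, weakly continuous) functional in \eqref{bunny}, whose value on absolutely continuous triples reduces to $\int_0^1 m(x)\nu^B_{0,0}(x)\,dx$ by the support structure of $\mu^B$. Your write-up is in fact somewhat more careful than the paper's one-line argument, in particular in noting the $O(1/N)$ discrepancy between $\hat\nu^1_N(0,0)$ and the $\hat\Pi^B_N$-functional and in identifying the constant $C$ as the tilted normalization.
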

	
	\subsubsection{Strategy of the proof}
	We give an hint of the strategy of the proof of Proposition \ref{euno}; the following Lemmas \ref{edue} and \ref{etre} follow by an application of the Varadhan lemma.
	The basic property that we use is the fact the measure \eqref{bellama} depends just on the empirical measures $\hat \Pi^I_N$ and $\hat \Pi^B_N$.
	
	Let us introduce a family of perturbations of the probability measure \eqref{bellama}: consider two space dependent families of probability measures $\gamma^B(x)=(\gamma^B_{a,b}(x))_{a=0,1}^{b=-1,0,1}$ and $\gamma^I(x)=(\gamma^I_{a,b}(x))_{a=0,1}^{b=-1,0,1}$, and consider the corresponding absolutely continuous measures
	$\gamma^B(x)dx$ and $\gamma^I(x)dx$ that are elements in $\mathcal M\left([0,1], \mathcal M^1\big(A\times\{-1,0,+1\}\big)\right)$.  For any $x$, $\gamma^B(x)$ is strictly positive on the support of $\mu^B$ and $\gamma^I(x)$ is strictly positive on the support of $\mu^I$ and, moreover, the $x$ dependent components are Lipschitz continuous. For any pair $\gamma^B,\gamma^I$ we introduce a corresponding probability measure on the sequences $(\eta,\zeta)$ defined by
	
	\begin{equation}\label{genperturb}
		\mathbb P^{\gamma^I,\gamma^B}_N:= \exp\left\{N\sum_{a,b}\int_{[0,1]}\left[\hat \Pi^I_N(a,b; dx)\log \gamma^I_{a,b}(x)+\hat \Pi^B_N(a,b; dx)\log \gamma^B_{a,b}(x)\right]\right\}\,,
	\end{equation}
	and notice that \eqref{bellama} corresponds to the cases of $\gamma^B$ and $\gamma^I$ constant and constantly equal to $\mu^B$ and $\mu^I$ respectively.
	
	The probability measure \eqref{genperturb} is absolutely continuous with respect to \eqref{bellama} and we can compute the Radon-Nikodym derivative that is given by
	\begin{equation}\label{RN}
		\frac{1}{N}\log \frac{d \mathbb P^{\gamma^I,\gamma^B}_N}{d \mathbb P^{\mathcal S,*}_N}=\sum_{a,b}\int_{[0,1]}\left[\hat \Pi^I_N(a,b; dx)\log\frac{ \gamma^I_{a,b}(x)}{\mu^I_{a,b}}+\hat \Pi^B_N(a,b; dx)\log\frac{ \gamma^B_{a,b}(x)}{\mu^B_{a,b}}\right]\,.
	\end{equation}
	The proof of the large deviations principle is based on the law of large numbers for the perturbed measures according to the following fact.
	
	\begin{proposition}\label{derby}
		When $(\eta,\zeta)$ are distributed according to $\mathbb P^{\gamma^I,\gamma^B}_N$ with initial condition on the $\zeta$ variables given by $\zeta_1=Nz_0$, then the triple $\left(\hat z_N,\hat{\Pi}_N^B,\hat{\Pi}_N^I\right)$ converges in probability to the absolutely continuous triple $(z,\Pi^B,\Pi^I)$ 
		such that 
		\begin{equation}\label{com}
			\left\{
			\begin{array}{l}
				\Pi^B(x)=m(x)\gamma^B(x)\,, \\
				\Pi^I(x)=(1-m(x))\gamma^I(x)\,,
			\end{array}
			\right.
		\end{equation}
		and $z(x)$ is the solution of
		\begin{equation}\label{intreno}
			\left\{
			\begin{array}{l}
				z(0)=z_0\\
				\dot z(x)=m(x)\gamma^B_{1,1}(x)+(1-m(x))(\gamma_{1,1}^I(x)-\gamma^I_{0,-1}(x))\,.
			\end{array}
			\right.
		\end{equation}
		In the above formulas $m(x)=0$ when $z(x)>0$ or when $z(x)=0$ and $\gamma^I_{1,1}(x)-\gamma^I_{0,-1}(x)>0$, otherwise when $z(x)=0$ and $\gamma^I_{1,1}(x)-\gamma^I_{0,-1}(x)\leq 0$, then $0\leq m(x) \leq 1$ is the unique value such that
		\begin{equation}\label{unicam}
			m(x)\gamma^B_{1,1}(x)+(1-m(x))(\gamma_{1,1}^I(x)-\gamma^I_{0,-1}(x))=0\,;
		\end{equation}
		this means 
		\begin{equation}
			m(x)=\frac{\left[\gamma^I_{0,-1}(x)-\gamma^I_{1,1}(x)\right]_+}{\gamma^B_{011}(x)+\gamma^I_{0,-1}(x)-\gamma^I_{1,1}(x)}\id(z(x)=0)\,,
		\end{equation}
		where $[\cdot ]_+$ denotes the positive part.
	\end{proposition}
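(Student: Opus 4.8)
The plan is to read the Markov dynamics carried by the perturbed measure $\mathbb P^{\gamma^I,\gamma^B}_N$, reduce the asymptotics of $\hat z_N$ to a Skorokhod reflection problem, and then transfer the conclusion to $\hat\Pi^B_N,\hat\Pi^I_N$ by a local law of large numbers. First I would identify the transition mechanism under $\mathbb P^{\gamma^I,\gamma^B}_N$. The weight in \eqref{genperturb} factorises over the steps as $\prod_{i=1}^N\gamma^I_{(\eta_i,\tilde\zeta_i)}(i/N)^{\id(\zeta_i>0)}\gamma^B_{(\eta_i,\tilde\zeta_i)}(i/N)^{\id(\zeta_i=0)}$, and $\zeta_i$ depends only on the earlier increments; since $\gamma^I(x)$ and $\gamma^B(x)$ each have mass one on the supports of $\mu^I$ and $\mu^B$, the weight is automatically normalised and $\mathbb P^{\gamma^I,\gamma^B}_N$ is a time--inhomogeneous Markov law in which, given the past, $(\eta_i,\tilde\zeta_i)$ is drawn from $\gamma^I(i/N)$ when $\zeta_i>0$ and from $\gamma^B(i/N)$ when $\zeta_i=0$. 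Consequently $\zeta$ alone is an inhomogeneous birth--death walk on $\mathbb N_0$ with bulk drift $d^I(x):=\gamma^I_{1,1}(x)-\gamma^I_{0,-1}(x)$ and boundary drift $d^B(x):=\gamma^B_{1,1}(x)>0$, which can never go below $0$ and from $0$ can only stay or move to $1$.

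Next I would derive the reflection identity for $\hat z_N$. From $\hat z_N(j/N)=z_0+\tfrac1N\sum_{i=1}^{j}\tilde\zeta_i$ and the decomposition of $\tilde\zeta_i$ into its conditional mean and a bounded martingale increment, Doob's $L^2$ inequality makes the martingale part uniformly $o_{\mathbb P}(1)$; using the Lipschitz continuity of the components of $\gamma^I,\gamma^B$, the conditional--mean part equals $\int_0^{j/N}d^I(y)\,dy+\int_0^{j/N}(d^B(y)-d^I(y))\,dG_N(y)+o(1)$, where $G_N(x):=\tfrac1N\#\{i\le Nx:\zeta_i=0\}$ is the (nondecreasing, $1$--Lipschitz, mass $\le1$) occupation function of $0$. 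Hence, uniformly in probability,
\begin{equation*}
	\hat z_N(x)=\psi(x)+\int_0^x\big(d^B(y)-d^I(y)\big)\,dG_N(y)+o_{\mathbb P}(1),\qquad \psi(x):=z_0+\int_0^x d^I(y)\,dy .
\end{equation*}
Two elementary facts complete the constraints: $\hat z_N\ge0$ since $\zeta_i\ge0$, and $\int_0^1\hat z_N\,dG_N=\tfrac1{N^2}\sum_{i:\zeta_i=0}\zeta_{i+1}\le1/N\to0$ since $\zeta_{i+1}\le1$ whenever $\zeta_i=0$. By the Helly selection theorem $(\hat z_N,G_N)$ is tight, and every subsequential limit $(z,G)$ satisfies $z\ge0$, $G$ nondecreasing and Lipschitz with $G(0)=0$ and $m:=\dot G\in[0,1]$, $\int_0^1 z\,dG=0$, and $z(x)=\psi(x)+\int_0^x(d^B(y)-d^I(y))m(y)\,dy$. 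From the boundary analysis below one gets that $m$ is carried by $\{z=0\}\cap\{d^I<0\}$, where $d^B-d^I>0$, so that $\ell:=\int_0^{\cdot}(d^B-d^I)m$ is nondecreasing and $(z,\ell)$ is the unique solution of the Skorokhod problem for $\psi$; all subsequential limits therefore agree, $(\hat z_N,G_N)\to(z,G)$ in probability, and differentiating the identity gives $\dot z=m\,d^B+(1-m)\,d^I$, which on $\{z=0\}$ reduces to the balance $m\,d^B+(1-m)\,d^I=0$, i.e.\ \eqref{intreno}, \eqref{unicam} and the stated explicit formula for $m$. Finally, on a mesoscopic window the $\sim N\varepsilon$ boundary (resp.\ bulk) steps carry conditionally i.i.d.\ increments with law $\approx\gamma^B(x)$ (resp.\ $\approx\gamma^I(x)$), so a standard local law of large numbers makes their empirical distributions concentrate; combined with $G_N\to G$ this yields $\hat\Pi^B_N\to m(x)\gamma^B(x)\,dx$ and $\hat\Pi^I_N\to(1-m(x))\gamma^I(x)\,dx$, which is \eqref{com}, and $\Pi=\Pi^B+\Pi^I$ closes the argument.

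The main obstacle is the boundary layer, namely the identification of $G$: one must show that the fraction of time the walk spends exactly at $0$ within a window around a macroscopic point $x$ converges to the stationary $0$--mass of the ``frozen'' birth--death chain with parameters $\gamma^I(x),\gamma^B(x)$, which a short computation with detailed balance gives as $\dfrac{-d^I(x)}{d^B(x)-d^I(x)}=\dfrac{[\gamma^I_{0,-1}(x)-\gamma^I_{1,1}(x)]_+}{\gamma^B_{1,1}(x)+\gamma^I_{0,-1}(x)-\gamma^I_{1,1}(x)}$ when $d^I(x)<0$, and $0$ when $d^I(x)\ge0$ (the chain being then null--recurrent or transient, so the walk escapes $0$). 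This is an ergodic--averaging statement for a reflected random walk with a slowly varying drift, and it requires showing that the excursions away from $0$ are negligible on the macroscopic scale; this is the technically delicate step, handled along the lines of \cite{EK,SW}. Everything else — the Markov reduction, the martingale bound, the Riemann-sum control, the uniqueness of the Skorokhod problem, and the local law of large numbers for the increment empirical measures — is routine.
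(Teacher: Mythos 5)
Your proposal is correct and follows essentially the same route as the paper: a fluid-limit/law-of-large-numbers argument for the perturbed time-inhomogeneous walk obtained from \eqref{genperturb}, with the limiting measures identified through the independent sampling structure and with $m(x)$ fixed on $\{z=0\}$ as the unique solution of the balance equation \eqref{unicam}, the delicate reflection estimates being deferred to \cite{EK,SW} in both treatments. The one genuine difference is that the frozen-chain/detailed-balance identification of the occupation density of the origin, which you single out as the main technical obstacle, is not actually needed: your own complementarity bound $\int \hat z_N\,dG_N\le 1/N$ together with a.e.\ differentiation of the drift identity on the set $\{z=0\}$ (which is exactly the paper's identification of $m$ via \eqref{diff} and \eqref{unicam}, and also yields uniqueness of the constrained limit by a standard reflected-ODE comparison) already determines $m$, so that step serves only as a consistency check.
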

	\begin{proof}
		We give an outline of the proof; see \cite{EK, SW} for results of this type.
		Due to the Lipschitz continuous assumption on $\gamma^B, \gamma^I$ we have existence and uniqueness of the solution of \eqref{intreno}. We already showed that any limit triple has to be absolutely continuous and, moreover, by independent sampling any limit measure has to be of the form \eqref{com} for a suitable $m(x)$. The proof of the lemma for the intervals on which $z(x)>0$ follows directly since it is a special case of the classic fluid limit \cite{K, SW} with $m=0$. On the intervals on which $z(x)=0$ and $\gamma^I_{1,1}(x)-\gamma^I_{0,-1}(x)\leq 0$, the result follows since any scaling limit has to be absolutely continuous, by \eqref{pace} it has to satisfy \eqref{diff}, and there is an unique $m(x)$ satisfying \eqref{unicam}.
	\end{proof}
	
	\paragraph{\bf Exponential tightness}
	
	To show exponential tightness we need to construct a family of compact sets $\left(\mathcal K_\alpha\right)_{\alpha \in \mathbb R^+}$ such that
	$$
	\limsup_{N\to+\infty} \frac 1N \log P^{\mathcal S,*}_N\left((\hat z_N, \hat \Pi^B, \hat \Pi^I)\in \mathcal K_\alpha\right)< -\alpha\,.
	$$
	This statement allows to extend the following upper bound from compact set to closed ones. This is a technical issue that we do not discuss in detail and that can be treated considering $\mathcal K_\alpha=\mathcal K_\alpha^z\times\mathcal K_\alpha^B\times \mathcal K_\alpha^I$ and following the arguments in \cite{DZ} Lemma 5.1.7 and \cite{KL} Section 4 of Chapter 10.
	
	\paragraph{\bf Lower bound}
	Consider a triple $(z, \Pi^B, \Pi^I)$ that satisfies the conditions A, B, C and D and such that $\Pi^B(x)=m(x)\gamma^B(x)$ and $\Pi^I(x)=(1-m(x))\gamma^I(x)$ with $\gamma^B,\gamma^I$ that are strictly positive and Lipschitz continuous. Then, by Proposition \ref{derby}, it follows that, with respect to the measures $\mathbb P^{\gamma^I,\gamma^B}_N$ the triple $\left(\hat z_N,\hat{\Pi}_N^B,\hat{\Pi}_N^I\right)$ converges in probability to the triple $(z,\Pi^B,\Pi^I)$ and, moreover, by the formula \eqref{RN} we have
	\begin{equation}
		\lim_{N\to +\infty}\frac 1Nh\left( \mathbb P^{\gamma^I,\gamma^B}_N \Big | \mathbb P^{\mathcal S,*}_N\right) 
		=\lim_{N\to +\infty}\frac 1N \mathbb E_{\mathbb P^{\gamma^I,\gamma^B}_N}\left[\log \frac{d \mathbb P^{\gamma^I,\gamma^B}_N}{d \mathbb P^{\mathcal S,*}_N}\right] = \mathcal I^*\Big(z,\Pi^B,\Pi^I\Big)\,.
	\end{equation}
	Let $\tilde{\mathcal I}^*$ be the functional equal to $\mathcal I^*$ for measures $\Pi^B=m\nu^B$ and $\Pi^I=(1-m)\nu^I$ with $\nu^I, \nu^B$ that are strictly positive and Lipschitz continuous and  equal to $+\infty$ otherwise.
	By Theorem 3.4 in \cite{M} we obtain a LD lower bound with rate functional that is $\tilde{\mathcal I}^*_{\textrm{lse}}$, the lower semicontinuous envelope of $\tilde{\mathcal I}^*$. The lower bound is then obtained by showing that $\tilde{\mathcal I}^*_{\textrm{lse}}=\mathcal I^*$.

	\paragraph{\bf Upper bound}
	The upper bound is obtained by the classic argument; by the exponential tightness we can consider compact subsets $\mathcal K$. We have the following estimates. Let $\gamma^I, \gamma^B$ be positive $x$ dependent probability measures, then we have the following classic steps
	\begin{eqnarray*}
		& &\mathbb P^{\mathcal S,*}_N\left((\hat z_N, \hat{\Pi}^B_N, \hat{\Pi}^I_N)\in \mathcal K\right)
		=\mathbb E_{\mathbb P^{\gamma^I,\gamma^B}_N}\left[\frac{d \mathbb P^{\mathcal S,*}_N}{d \mathbb P^{\gamma^I,\gamma^B}_N}\id\left((\hat z_N, \hat{\Pi}^B_N, \hat{\Pi}^I_N)\in \mathcal K\right)\right] \\
		&\leq& \exp\left\{-\inf_{\left\{(\pi, \Pi^B, \Pi^I)\in \mathcal K\right\}}\sum_{a,b}\int_{[0,1]}\left[\Pi^I(a,b; dx)\log\frac{ \gamma^I_{a,b}(x)}{\mu^I_{a,b}}+ \Pi^B(a,b; dx)\log\frac{ \gamma^B_{a,b}(x)}{\mu^B_{a,b}}\right]\right\}\,.
	\end{eqnarray*}
	Since the above estimate is true for any positive $\gamma^B, \gamma^I$ we can optimize getting
	\begin{eqnarray*}
		& &	\limsup_{N\to +\infty} \frac 1N \log \mathbb P^{\mathcal S,*}_N\left((\hat z_N, \hat{\Pi}^B_N, \hat{\Pi}^I_N)\in \mathcal K\right)\leq \\
		& -&	\sup_{\left\{\gamma^B,\gamma^I\right\}}\inf_{\left\{(\pi, \Pi^B, \Pi^I)\in \mathcal K\right\}}\sum_{a,b}\int_{[0,1]}\left[\Pi^I(a,b; dx)\log\frac{ \gamma^I_{a,b}(x)}{\mu^I_{a,b}}+ \Pi^B(a,b; dx)\log\frac{ \gamma^B_{a,b}(x)}{\mu^B_{a,b}}\right] \\
		&= &-\inf_{\left\{(\pi, \Pi^B, \Pi^I)\in \mathcal K\right\}}\sup_{\left\{\gamma^B,\gamma^I\right\}}\sum_{a,b}\int_{[0,1]}\left[\Pi^I(a,b; dx)\log\frac{ \gamma^I_{a,b}(x)}{\mu^I_{a,b}}+ \Pi^B(a,b; dx)\log\frac{ \gamma^B_{a,b}(x)}{\mu^B_{a,b}}\right] \\
		&=& -\inf_{\left\{(\pi, \Pi^B, \Pi^I)\in \mathcal K\right\}}\mathcal I^*\Big(z,\Pi^B,\Pi^I\Big)\,,
	\end{eqnarray*}
	and this is the LD upper bound.
	In the first inequality we optimized over $\gamma^B \gamma^I$; in the second equality
	we used a minmax lemma (see for example lemmas 3.2 and 3.3 in Appendix 2 of \cite{KL}) to exchange infimum and supremum, and the last equality follows from a direct computation by using Lagrange multipliers.
	 	\smallskip

	\subsection{The contraction}
	
	By contraction we can deduce the LD rate functional for the empirical measure of the TASEP configuration \eqref{imp}. We proceed by successive minimizations. We first contract the rate functional $\mathcal I^{\mathfrak S}(z,\Pi^B,\Pi^I)$ to the rate functional $\mathfrak I^{\mathfrak S}(z,\Pi)$ for the pair $\left(\hat z_N,\hat{\Pi}_N\right)$. By contarction we have 
	\begin{equation}\label{contr}
		\mathfrak I^{\mathfrak S}(z,\Pi)=\inf_{\left\{\Pi^B+\Pi^I=\Pi\right\}} \mathcal I^{\mathfrak S}(z,\Pi^B,\Pi^I)\,.
	\end{equation}
	We deduce the following result.
	\begin{proposition}\label{nonholeta}
		When the variables $(\eta,\zeta)$ are distributed according to the Markov bridge \eqref{bridgecompact}, then the pair $\left(\hat z_N,\hat{\Pi}_N\right)$  satisfies a LD principle with a rate functional that is $+\infty$ if at least one among the constraints A, B, C and D is violated and it is otherwise given by
		\begin{equation}\label{mirt}
			\mathfrak I^{\mathfrak S}\Big(z,\Pi\Big)=z(0)\log\frac{\alpha}{1-\alpha}+z(1)\log\frac{\beta}{1-\beta}+\int_0^1 h\left(\Pi(x)|\mu^I\right)\, dx\ +C\,,
		\end{equation}
		where $C$ is the same constant in Lemma \ref{etre}.
	\end{proposition}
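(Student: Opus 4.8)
The plan is to evaluate the contraction \eqref{contr} directly, reducing it to a pointwise-in-$x$ optimization and then recognizing the result as a single relative entropy. First I would substitute into \eqref{contr} the expression for $\mathcal I^{\mathfrak S}$ obtained by chaining Lemmas \ref{edue} and \ref{etre} with \eqref{Istar}: for an admissible triple with $\Pi^B=m(x)\nu^B(x)\,dx$ and $\Pi^I=(1-m(x))\nu^I(x)\,dx$ this reads
\begin{equation*}
\mathcal I^{\mathfrak S}(z,\Pi^B,\Pi^I)=z(0)\log\tfrac{\alpha}{1-\alpha}+z(1)\log\tfrac{\beta}{1-\beta}+\int_0^1\!\Big[m\,h(\nu^B|\mu^B)+(1-m)\,h(\nu^I|\mu^I)+m\,\nu^B_{0,0}\log 2\Big]dx+C'\,,
\end{equation*}
with $C'$ the constant coming from those two lemmas. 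The boundary terms in $z(0),z(1)$ and the constant do not depend on the decomposition, so only the integral has to be minimized over $\{\Pi^B+\Pi^I=\Pi\}$ subject to the constraints A--D.

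The crucial algebraic point, which I would isolate next, is that the correction $m\,\nu^B_{0,0}\log 2$ exactly converts the reference measure $\mu^B$ into $\mu^I$. From the explicit values one checks $\mu^B_{a,b}=2^{\id((a,b)=(0,0))}\mu^I_{a,b}$ for every $(a,b)$ in the support of $\mu^B$, and since $\nu^B(x)$ is supported there (a subset of the support of $\mu^I$), this gives $h(\nu^B(x)|\mu^B)+\nu^B_{0,0}(x)\log2=h(\nu^B(x)|\mu^I)$. Hence the integrand becomes $m\,h(\nu^B|\mu^I)+(1-m)\,h(\nu^I|\mu^I)$, and by the log--sum inequality (convexity of $\nu\mapsto h(\nu|\mu^I)$ at the fixed second argument) it is bounded below, for a.e.\ $x$, by $h\big(m\nu^B+(1-m)\nu^I\,\big|\,\mu^I\big)=h(\Pi(x)|\mu^I)$. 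Integrating yields the lower bound matching \eqref{mirt}.

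For the matching upper bound I would exhibit the decomposition $m\equiv 0$, $\nu^I(x):=\Pi(x)$, with $\nu^B$ arbitrary (it is multiplied by $0$). I then need to check admissibility: A holds with $m\in[0,1]$, while $\nu^I$ is supported in the support of $\mu^I$ on the set where $h(\Pi(x)|\mu^I)<+\infty$; constraint B for the triple is inherited from \eqref{diff} for $(z,\Pi)$ once $m\equiv0$; constraint C involves only $z$; and D is automatic. On the complementary set, where $\Pi$ charges a point outside the support of $\mu^I$, no admissible decomposition of $\Pi$ exists, so both sides of \eqref{contr} equal $+\infty$ and the identity still holds. For this decomposition the integrand is exactly $h(\Pi(x)|\mu^I)$, so the infimum in \eqref{contr} is attained and equals the right-hand side of \eqref{mirt}, with $C$ the constant inherited from Lemmas \ref{edue} and \ref{etre}.

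The only genuinely technical ingredient is justifying that the infimum over the functional of $\Pi^B$ may be passed inside the integral, i.e.\ a measurable-selection/interchange argument of the same kind already used in the contractions above; I do not expect a real obstacle here. Nor is there any tension with constraint D: precisely on the region $\{z>0\}$, where D forces $m=0$, the forced decomposition is the optimal one, while on $\{z=0\}$ the choice $m\equiv0$ is both admissible and optimal by the convexity step. The one thing to keep honest throughout is the bookkeeping of the additive constants produced by the chain of Lemmas \ref{edue} and \ref{etre}.
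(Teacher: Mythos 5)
Your proposal is correct and reaches the same minimizer ($\Pi^B=0$, $\Pi^I=\Pi$) and the same formula \eqref{mirt}, but the key pointwise minimization is handled by a genuinely different and, in my view, cleaner argument. The paper writes down the Lagrange conditions for \eqref{stranger} under \eqref{vincolo1}, asserts that the system has no critical points for $m(x)\in(0,1)$, concludes that the minimum sits on the boundary $m(x)\in\{0,1\}$, and then rules out (or ties) $m(x)=1$ by inspecting whether $\Pi_{0,-1}(x)>0$. You instead observe the exact relation $\mu^B_{a,b}=2^{\id((a,b)=(0,0))}\mu^I_{a,b}$ on the support of $\mu^B$, so that the Radon--Nikodym correction term from Lemma \ref{etre} satisfies $h(\nu^B|\mu^B)+\nu^B_{0,0}\log 2=h(\nu^B|\mu^I)$; the integrand then becomes $m\,h(\nu^B|\mu^I)+(1-m)\,h(\nu^I|\mu^I)$, which by convexity of $\nu\mapsto h(\nu|\mu^I)$ is bounded below by $h(\Pi(x)|\mu^I)$, with the bound attained at $m\equiv 0$. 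This replaces the (somewhat opaque) ``no interior critical points'' claim by a two-line computation, gives the lower and upper bounds simultaneously, and also explains \emph{why} the answer is a single relative entropy against $\mu^I$: the $\log 2$ term is precisely the change of reference measure from $\mu^B$ to $\mu^I$. What the paper's route buys is consistency of style with the Lagrange-multiplier treatment used elsewhere (e.g. in \eqref{constraints}); what yours buys is rigor and transparency at this step. Your handling of the admissibility of the $m\equiv 0$ decomposition (constraints A--D, support issues, the region $\{z>0\}$) and of the case where $\Pi$ charges the complement of $\supp\mu^I$ is correct; the only residual imprecision, which you flag yourself and which is already present in the paper (the constant in \eqref{mirt} should really collect the constants of both Lemma \ref{edue} and Lemma \ref{etre}), is the bookkeeping of additive constants.
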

	\begin{proof}
		Since $z$ is fixed, to perform the minimization in \eqref{contr}, we need to minimize, for each $x$ such that $z(x)=0$, the following expression
		\begin{equation}\label{stranger}
			\Big[m(x)h\left(\nu^B(x)|\mu^B\right)+(1-m(x))h\left(\nu^I(x)|\mu^I\right)\Big]+m(x)\nu^B_{0,0}(x)\log 2\,,
		\end{equation}
		under the condition
		\begin{equation}\label{vincolo1}
			\left\{m(x),\nu^B(x),\nu^I(x)\,:\, m(x)\nu^B(x)+(1-m(x))\nu^I(x)=\Pi(x)\right\}\,
		\end{equation}
		where we recall that $\sum_{a, b}\nu^B_{a, b}(x)=1$ and $\sum_{a, b}\nu^I_{a, b}(x)=1$. When $z(x)>0$ we do not need to do a minimization since $m(x)$ is constrained to be zero.
		
		By writing the variational conditions with the Lagrange multipliers for the variational problem \eqref{stranger}, namely \eqref{vincolo1} with also the normalization of $\nu^B$ and $\nu^I$, one finds that the resulting system of equations has no critical points for any $m(x) \in (0,1)$. The minimizer must be therefore on the boundary, i.e. corresponding to $m(x)=0$ or $m(x)=1$. Note that if $m(x)=0$ it follows that $\Pi(x)=\nu^I(x)$ while if $m(x)=1$ then $\Pi(x)=\nu^B(x)$.
		
		\smallskip
		
		If the probability measure $\Pi(x)$ satisfies $\Pi_{0,-1}(x)>0$ then the minimum is necessarily attained at $m(x)=0$. Indeed when $m(x)=1$ we have $\Pi(x)=\nu^B(x)$ and we have $\nu^B_{0,-1}(x)=0$ for any $\nu^B$ ; hence the value $m(x)=1$ is not admissible when $\Pi_{0,-1}(x)>0$.
		
		When $\Pi_{0,-1}(x)=0$ we obtain the same value if we compute \eqref{stranger} for $m(x)=0$ and $m(x)=1$; therefore, also in this case, we can consider $m(x)=0$. We conclude that the large deviations rate functional $\mathfrak I^{\mathfrak S}$ is obtained having as a minimizer in \eqref{contr} $\Pi^B=0$ and $\Pi^I=\Pi$ obtaining \eqref{mirt} as a result.
	\end{proof}
	
		In Proposition \ref{nonholeta} the constant $C$ that appears in the large deviation rate functional is fixed by the normalization factor of the Markov bridge $\mathbb P^{\mathfrak S, f,g}_{N}$, i.e. by $\mathfrak{Z}_N$; we recall that this constant $C$ coincides with the one in Lemma \ref{etre}. By construction, $\mathfrak{Z}_N$ is given by $\mathfrak{Z}_N=\frac{Z_N}{4^N}$ with $Z_N=\bra{y}(M^{(0)}+M^{(1)})^N \ket{x}$. More precisely, $C$ is determined by the asymptotic behaviour of $Z_N$ (see, \cite{Der3}), up to the additive constant $-\log4$. In particular, we obtain that $C:=C(\alpha,\beta)=-\log4-\log(\bar{\rho}(1-\bar{\rho}))$, where $\bar{\rho}$ is the limiting particle density:
		\begin{align*}
			\bar{\rho}= \begin{cases}
				\frac{1}{2} &  \alpha \ge 1, \ \ \beta \ge 1 \\
				\alpha & \alpha > \frac{1}{2}, \ \ \beta>\alpha \\ 
				1-\beta & \beta > \frac{1}{2} , \ \ \alpha > \beta.
			\end{cases}
		\end{align*}
	
	\smallskip
	Our next step is to deduce by contraction the LD rate functional $I^{\mathfrak S}(z,\rho)$ for the pair $(\hat z_N, \hat \pi_N)$ from \eqref{mirt}. Let us introduce the function $H(x):= x \log x+(1-x)\log(1-x)$.
	\begin{proposition}
		When the variables $(\eta,\zeta)$ are distributed according to the Markov bridge \eqref{bridgecompact}, then the pair $\left(\hat z_N,\hat{\pi}_N\right)$  satisfies a LD principle with rate functional $I^{\mathfrak S}(z,\rho)$ that is $+\infty$ if $\rho$ and $z$ are not absolutely continuous or if condition C is violated. Moreover, the rate functional is also $+\infty$ if for almost all $x$ the densities $(\dot z(x), \rho(x))$ do not belong to the region $R\subset \mathbb R^2$ defined by 
		\begin{equation}\label{R}
			R=\left\{(\dot z, \rho)\,:\, 0\leq \rho \leq 1\,, 0\leq \rho - \dot z \leq 1\right\}.
		\end{equation}
		In the remaining cases the rate functional is finite and it is given by
		\begin{equation}\label{rhoezeta}
			I^{\mathfrak S}(z,\rho)=z(0)\log\frac{\alpha}{1-\alpha}+z(1)\log\frac{\beta}{1-\beta}+\int_0^1 \Big[H\big(\rho(x)\big)+H\Big(\rho(x)-\dot{z}(x)\Big)\Big] \, dx +  C'
		\end{equation}
		where $C'$ is a normalization constant.
	\end{proposition}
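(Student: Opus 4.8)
The plan is to obtain $I^{\mathfrak S}$ from the functional $\mathfrak I^{\mathfrak S}(z,\Pi)$ of Proposition~\ref{nonholeta} by a further application of the contraction principle, along the continuous map $\Phi:(z,\Pi)\mapsto(z,\rho)$ with $\rho\,dx:=\Pi(1,0;dx)+\Pi(1,1;dx)$; on the effective domain of $\mathfrak I^{\mathfrak S}$, where $\Pi$ is supported on $\supp\mu^I$ and hence its $(1,-1)$ component vanishes, this is exactly the map producing \eqref{imp}. Since $\Phi$ is the identity on the path component of $C([0,1])$ and a bounded linear operation on the measure component, it is continuous, and the contraction principle gives
\begin{equation*}
I^{\mathfrak S}(z,\rho)=z(0)\log\frac{\alpha}{1-\alpha}+z(1)\log\frac{\beta}{1-\beta}+C+\inf\Big\{\int_0^1 h\big(\Pi(x)\,|\,\mu^I\big)\,dx\Big\}\,,
\end{equation*}
the infimum running over measurable families $x\mapsto\Pi(x)$, with $\Pi(x)$ a probability measure on $A\times\{-1,0,1\}$, such that $\Pi_{1,0}(x)+\Pi_{1,1}(x)=\rho(x)$ and, by constraint B in the form \eqref{diff} (recall that the $\Pi^B$-part of the minimizer in \eqref{mirt} vanishes), $\Pi_{1,1}(x)-\Pi_{0,-1}(x)=\dot z(x)$, for a.e.\ $x$. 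The boundary terms and the constant $C$ do not involve $\Pi$, condition C on $z$ is inherited, and the infimum over an empty set produces the $+\infty$ when $\rho$ or $z$ is not absolutely continuous.

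Both the integrand and the constraints being local in $x$, the minimization factorizes and reduces, for a.e.\ fixed $x$ with $\rho=\rho(x)$, $\dot z=\dot z(x)$, to minimizing $h(q\,|\,\mu^I)$ over probability vectors $q=(q_{0,0},q_{0,-1},q_{1,0},q_{1,1})$ (automatically supported on $\supp\mu^I$) subject to $q_{1,0}+q_{1,1}=\rho$ and $q_{1,1}-q_{0,-1}=\dot z$. Parametrizing by $t:=q_{0,-1}$ one gets $q_{1,1}=\dot z+t$, $q_{1,0}=\rho-\dot z-t$, $q_{0,0}=1-\rho-t$, so the feasible set is the interval $t\in[\max(0,-\dot z),\min(1-\rho,\rho-\dot z)]$, which is non-empty precisely when $0\le\rho\le1$ and $0\le\rho-\dot z\le1$, i.e.\ $(\dot z,\rho)\in R$ from \eqref{R}. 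Outside $R$ the infimum is $+\infty$; together with the absolute-continuity requirement and condition C, this reproduces the stated domain.

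On $R$ the objective is strictly convex in $t$, and since $\mu^I$ is uniform on its four-point support, $h(q\,|\,\mu^I)=\sum_{a,b}q_{a,b}\log q_{a,b}+\log4$. The stationarity condition $\frac{d}{dt}h(q\,|\,\mu^I)=\log\frac{q_{0,-1}q_{1,1}}{q_{0,0}q_{1,0}}=0$ has the unique root $t_*=(1-\rho)(\rho-\dot z)$, at which
\begin{equation*}
q_{0,0}=(1-\rho)\big(1-(\rho-\dot z)\big),\ \ q_{0,-1}=(1-\rho)(\rho-\dot z),\ \ q_{1,0}=\rho(\rho-\dot z),\ \ q_{1,1}=\rho\big(1-(\rho-\dot z)\big)\,,
\end{equation*}
i.e.\ $q$ is the product of the Bernoulli laws of parameters $\rho$ and $\rho-\dot z$, as one expects for a minimizer of relative entropy with uniform reference under linear marginal-type constraints on a product space. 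Since $t_*$ always lies in the feasible interval on $R$ (the bound $t_*\ge\max(0,-\dot z)$ uses $\rho-\dot z\le1$), it is the minimizer, and inserting the product form collapses $\sum_{a,b}q_{a,b}\log q_{a,b}$ to $H(\rho)+H(\rho-\dot z)$. Hence $\inf h(q\,|\,\mu^I)=H(\rho)+H(\rho-\dot z)+\log4$, and this identity extends to $\partial R$ by continuity with $H(0)=H(1)=0$ where the feasible interval degenerates to a point. Integrating in $x$ and using $\int_0^1\log4\,dx=\log4$ produces \eqref{rhoezeta} with $C'=C+\log4$, which by the value of $C$ recalled after Proposition~\ref{nonholeta} equals $-\log\big(\bar\rho(1-\bar\rho)\big)$.

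The only truly delicate point is the behaviour on $\partial R$: there the feasible interval collapses and some components of $q$ vanish, so one must confirm that the entropy formula $H(\rho)+H(\rho-\dot z)+\log4$ still holds (it does, by continuity and $H(0)=H(1)=0$). Everything else is routine: the stationarity computation, the algebraic substitution giving $H(\rho)+H(\rho-\dot z)$, and the measurable selection of the pointwise minimizer — harmless since $x\mapsto q(x)$ is an explicit continuous function of $(\dot z(x),\rho(x))$ on $R$. As the contracted infimum is attained and explicit, the contraction principle delivers exactly \eqref{rhoezeta}, which is automatically lower semicontinuous.
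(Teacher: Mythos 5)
Your proposal is correct and follows exactly the paper's route: contracting the rate functional of Proposition \ref{nonholeta} along the map $(z,\Pi)\mapsto(z,\rho)$ and performing the constrained minimization of $h(\Pi(x)\,|\,\mu^I)$ pointwise in $x$. The paper merely asserts that this ``direct minimization'' yields \eqref{rhoezeta} and that the region $R$ comes from $\Pi$ being a probability measure; you carry out that computation explicitly (the one-parameter feasible interval, the stationarity condition, the product-form minimizer collapsing to $H(\rho)+H(\rho-\dot z)+\log 4$, hence $C'=C+\log 4$), all of which checks out and is consistent with the constant identified after the proposition.
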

	\begin{proof}
		By the contraction principle, the proof follows by a direct minimization of \eqref{mirt} over $\Pi$ with the constraints
		\begin{equation}
			\left\{
			\begin{array}{l}
				\rho(x)=\Pi_{1,1}(x)+\Pi_{1,0}(x)\\
				\dot z(x)=\Pi_{1,1}(x)-\Pi_{0,-1}(x)\,.
			\end{array}
			\right.
		\end{equation}
		The geometric constraints \eqref{R} follow by the above relation and the fact that $\Pi$ is a probability measure. 
	\end{proof}
	
	A direct computation shows that the constant in \eqref{rhoezeta} is given by $C':=C'(\alpha,\beta)=C(\alpha,\beta)+\log4$ where $C(\alpha,\beta)$ is the constant in \eqref{mirt}.
		
		\smallskip
	
	We can now obtain the LD rate function $I^{\mathrm{TASEP}}(\rho)$ for the empirical measure \eqref{imp} when $\eta$ is distributed according to the invariant measure of boundary driven TASEP by contraction from \eqref{rhoezeta}, i.e. we have $I^{\mathrm{TASEP}}(\rho)=\inf_z 	I^{\mathfrak S}(z,\rho)$. 
	We define: 
	\begin{equation*}
		a=\frac{1-\alpha}{\alpha}, \ \ \ b=\frac{1-\beta}{\beta}\,
	\end{equation*}
	with this notation we are  considering the regime of TASEP corresponding to $ab<1$.
	This last contraction gives as a result the rate functional in \cite{Der4, Derr7}. Our representation is obtained as an infimum and we write the final expression in the same form of \cite{Bryc} where its equivalence with the representation in \cite{Der4, Derr7} is shown.
	
	\begin{theorem}
		Let $F(x)=\int_0^x\rho(y)dy$ and $\mathcal G$ the set of absolutely continuous functions $G$ such that $G(0)=0$ and $0\leq \dot G(x)\leq 1$. Let $\eta$ be distributed according to the invariant measure of the boundary driven TASEP when the boundary parameters are such that $ab<1$. We have that the large deviations rate functional for $\hat\pi_N$ in \eqref{imp} is given by
		\begin{equation*}
			\begin{aligned}
				I^{\mathrm{TASEP}}(\rho)
				= \inf_{G\in \mathcal G} \biggl\{
				&\int_0^1 \Bigl[H\left(\dot F(y)\right) + H\left(\dot{G}(y)\right)\Bigr]\, dy  \\
				&\quad + \ln(ab)\, \min_{x\in[0,1]} \bigl[F(x)-G(x)\bigr] - \ln(b)\, \bigl(F(1)-G(1)\bigr)
				\biggr\}.
			\end{aligned}
		\end{equation*}
	\end{theorem}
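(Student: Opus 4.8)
The plan is to obtain the theorem by carrying out explicitly the last contraction $I^{\mathrm{TASEP}}(\rho)=\inf_z I^{\mathfrak S}(z,\rho)$, with $I^{\mathfrak S}$ the functional in \eqref{rhoezeta}. Recall that $I^{\mathfrak S}(z,\rho)$ is $+\infty$ unless $z,\rho$ are absolutely continuous, $z(x)\ge 0$ (condition C), and $(\dot z(x),\rho(x))$ lies in the region $R$ of \eqref{R} for a.e.\ $x$; on that domain it equals $z(0)\log\tfrac{\alpha}{1-\alpha}+z(1)\log\tfrac{\beta}{1-\beta}+\int_0^1[H(\rho(x))+H(\rho(x)-\dot z(x))]\,dx+C'$. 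First I would change variables from $z$ to a function $G$ by setting
\[
\dot G(x):=\rho(x)-\dot z(x),\qquad G(0):=0,
\]
so that $G(x)=F(x)-z(x)+z(0)$ with $F(x)=\int_0^x\rho$. The purpose of this substitution is that the constraint $(\dot z,\rho)\in R$ a.e.\ splits into $0\le\rho\le 1$ a.e.\ (a condition on $\rho$ alone, failure of which makes both sides of the claimed identity $+\infty$) and $0\le\dot G\le 1$ a.e., i.e.\ $G\in\mathcal G$; conversely every $G\in\mathcal G$ arises this way with $z(x)=z(0)+F(x)-G(x)$.

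Next I would rewrite the remaining constraint and the boundary terms in the new variables. Since $G(0)=F(0)=0$, the requirement $z(x)=z(0)+F(x)-G(x)\ge 0$ for all $x\in[0,1]$ is equivalent to $z(0)\ge-\min_{x\in[0,1]}(F(x)-G(x))$, the minimum being attained by continuity on $[0,1]$ (in particular $z(0)\ge 0$ automatically). Using $z(1)=z(0)+F(1)-G(1)$ together with $\log\tfrac{\alpha}{1-\alpha}=-\log a$ and $\log\tfrac{\beta}{1-\beta}=-\log b$, the functional takes the form
\[
I^{\mathfrak S}(z,\rho)=\int_0^1\big[H(\dot F(x))+H(\dot G(x))\big]\,dx-z(0)\log(ab)-\log(b)\,(F(1)-G(1))+C',
\]
so that $I^{\mathfrak S}$ depends on $z$ only through the scalar $z(0)$ and the function $G\in\mathcal G$.

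Then I would perform the two minimizations in order. For fixed $G$, minimizing over $z(0)\ge-\min_x(F(x)-G(x))$ is where the standing hypothesis $ab<1$ enters: it gives $-\log(ab)>0$, so the expression is strictly increasing in $z(0)$ and is minimized at $z(0)=-\min_{x\in[0,1]}(F(x)-G(x))$, whence $-z(0)\log(ab)=\ln(ab)\min_{x}[F(x)-G(x)]$. (This is the only place the regime assumption is used, and it is essential: for $ab\ge 1$ the infimum over $z(0)$ degenerates, which is consistent with the different phenomenology announced in the Introduction.) Taking finally the infimum over $G\in\mathcal G$ and renaming the integration variable $x\mapsto y$ produces the displayed formula, with additive constant $C'$ from \eqref{rhoezeta}; recalling that $C'=C(\alpha,\beta)+\log 4=-\log(\bar\rho(1-\bar\rho))$, fixed by the asymptotics of $Z_N=\bra y M^N\ket x$ (cf.\ \cite{Der3}), this is exactly the normalization under which the resulting functional coincides with the (good, zero–minimum) rate functional of \cite{Der4,Derr7} in the form of \cite{Bryc}.

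I expect the algebra above to be routine; the two points deserving care are the verification that the dictionary $z\leftrightarrow G$ maps the domain of $I^{\mathfrak S}$ exactly onto $\{G\in\mathcal G\}\times\{z(0)\ge-\min_x(F-G)\}$, so that conditions A--D translate without creating or losing constraints, and the observation that the scalar minimization over $z(0)$ is well posed precisely because $ab<1$. Pinning down the constant $C'$ and matching the final expression to the representations in \cite{Der4,Derr7,Bryc} is then a direct computation rather than a conceptual difficulty.
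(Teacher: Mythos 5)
Your proposal is correct and follows essentially the same route as the paper's proof: the substitution $G(x)=F(x)-(z(x)-z(0))$, the rewriting of the boundary terms via $\log\frac{\alpha}{1-\alpha}=-\log a$ and $z(1)=z(0)+F(1)-G(1)$, and the observation that $ab<1$ makes the functional increasing in $z(0)$ so the infimum is attained at $z(0)=\max_{x}[G(x)-F(x)]=-\min_x[F(x)-G(x)]$ are exactly the steps in the paper. Your additional remarks on where the regime assumption enters and on the normalization constant are consistent with the surrounding discussion in the text.
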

	\begin{proof}
		By the contraction principle  we have to  minimize the functional \eqref{rhoezeta} over the functions $z(x) \ge 0$ such that $0 \le \rho(x)-\dot z(x) \le 1$ for almost any $x\in[0,1]$.
		
		We define:
		\begin{align*}
			& F(x)= \int_0^x \rho(y)\,dy \\
			&G(x)=F(x)-\Big(z(x)-z(0)\Big)\,,
		\end{align*}
		and note that $G(0)=0$ and $\dot G(x)=\rho(x)-\dot z(x)$ so that $G\in \mathcal G$; moreover $F(x)+z(0)-G(x)=z(x)\geq 0$.
		The functional \eqref{rhoezeta} can be written in terms of $F, G, z(0)$ as
		$$
		\int_0^1 \Bigl[H\left(\dot F(y)\right) + H\left(\dot{G}(y)\right)\Bigr]\, dy  
		-z(0)\ln(ab)-\bigl[F(1)-G(1)\bigr]\ln(b):=\mathcal L(F, G, z(0))\,.
		$$
		By the contraction principle we have
		We rewrite the problem:
		\begin{equation*}
			I^{\mathrm{TASEP}}(\rho)=\inf_{\big\{z(0) \ge 0\big\}}  \inf_{\big\{G\in \mathcal G : G(x) \le F(x)+z(0)\big\}} \mathcal{L}(F, G, z(0)) \,.
		\end{equation*}
		We exchange the order of the infima obtaining that the above variational problem is equivalent to
		\begin{equation*}
			I^{\mathrm{TASEP}}(\rho)=\inf_{\big\{G\in \mathcal G\big\}}  \inf_{\big\{z(0)\ge \max_{x \in [0,1]}[G(x)-F(x)]\big\}} \mathcal{L}(F, G, z(0)) \,.
		\end{equation*}
		Since $ab<1$ the value of $z(0)$ that minimizes the first infimum is exactly $z(0)= \max_{x \in [0,1]}[G(x)-F(x)]$ and this finishes the proof.
	\end{proof}
	
	\subsection*{Declarations}
	The authors have no conflicts of interest. Data sharing is not applicable to this article as no datasets were generated or analysed during the current study.
	
		\subsection*{Acknowledgements}
	We thank M. Goldwurm and F. Mignosi for useful discussions.
	DG acknowledges the financial support from the Italian Research Funding Agency (MIUR) through
	PRIN project ``Emergence of condensation-like phenomena in interacting particle systems: kinetic and lattice models'', grant n. 202277WX43.

	\appendix
	
	\section{Some mathematical tools}

	We recall here some classic result and deduce some facts that we use in the paper. 
	
	\subsection{Finite Perron-Frobenius Theorem}
	\label{PF-finite}
	Here we briefly recall the classic Perron Frobenius Theorem with also some auxiliary lemmas. We refer to \cite{K} for a complete discussion. The results hold true for any finite alphabeth.

	\begin{theorem}[Perron-Frobenius]\label{thPFf}
		Consider a finite $B\times B$ irreducible and aperiodic non-negative matrix $M$, then there exists a positive maximal eigenvalue $\lambda>0$ such that all the remaining eigenvalues $\lambda_i$ are such that $|\lambda_i|<\lambda$. The unique eigenvector $\ket e=(e(b))_{b\in B}$ corresponding to the maximal eigenvalue can be fixed in such a way that $e(b)>0$ for any $b\in B$.
	\end{theorem}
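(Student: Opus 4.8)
The plan is to follow the classical route, in four stages—existence of a nonnegative eigenpair, strict positivity, uniqueness, and finally the strict spectral gap—building each on the previous one. Throughout I write $M\ket e=\lambda\ket e$ componentwise as $\sum_{b'}M_{b,b'}e(b')=\lambda e(b)$, and I use that strong connectivity of $(B,E)$ forces no row and no column of $M$ to vanish.

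First I would produce a nonnegative eigenvector by a compactness argument. Since no row vanishes, $Mx\neq 0$ for every $x\ge 0$ with $x\neq 0$, so the normalized map $T(x)=Mx/\|Mx\|_1$ is a continuous self-map of the compact convex simplex $\Delta=\{x\ge 0:\sum_b x(b)=1\}$; Brouwer's theorem gives a fixed point $\ket e\in\Delta$ with $M\ket e=\lambda\ket e$ and $\lambda=\|M\ket e\|_1>0$. To upgrade $\ket e\ge 0$ to $\ket e>0$ I propagate positivity backward along edges: if $e(b')>0$ and $(b,b')\in E$ then $\lambda e(b)=\sum_{b''}M_{b,b''}e(b'')\ge M_{b,b'}e(b')>0$. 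Picking $b_0$ with $e(b_0)>0$ and, for an arbitrary $b$, a path $b\to\cdots\to b_0$ (strong connectivity), iterating this implication along the path gives $e(b)>0$. Applying the same construction to the transpose $M^{\top}$—also irreducible and aperiodic—produces a strictly positive left eigenvector $\bra u$ with $\bra u M=\lambda'\bra u$; testing $\bra u M\ket e$ shows $\lambda'=\lambda$.

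Uniqueness of the positive eigenvector comes from the same positivity mechanism. If $\ket e,\ket{e'}$ are two positive eigenvectors for $\lambda$, set $t=\min_b e(b)/e'(b)$; then $\ket e-t\ket{e'}\ge 0$ is an eigenvector for $\lambda$ vanishing at the minimizing index, so it cannot be a nonzero nonnegative eigenvector (those are strictly positive by the propagation above), forcing $\ket e=t\ket{e'}$. For dominance I use the left eigenvector: any complex eigenpair $Mv=\mu v$ satisfies $M|v|\ge|\mu|\,|v|$ by the triangle inequality, and pairing with $\bra u>0$ gives $\lambda\langle u,|v|\rangle=\langle u,M|v|\rangle\ge|\mu|\langle u,|v|\rangle$, whence $|\mu|\le\lambda$; thus $\lambda=\rho(M)$.

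The strict gap is where aperiodicity enters and is the technical heart. Suppose $|\mu|=\lambda$ with $Mv=\mu v$. Pairing the inequality $M|v|-\lambda|v|\ge 0$ against $\bra u>0$ shows it vanishes, so $M|v|=\lambda|v|$; by uniqueness $|v|=c\ket e>0$, so every $|v(b)|>0$. Writing $v(b)=|v(b)|e^{\rmi\theta(b)}$ and $\mu=\lambda e^{\rmi\phi}$, the equality case of the triangle inequality in $\sum_{b'}M_{b,b'}v(b')=\mu v(b)$ forces $\theta(b')\equiv\theta(b)+\phi\pmod{2\pi}$ for every edge $(b,b')\in E$. Summing along any closed walk from $b$ to $b$ of length $k$ gives $k\phi\in 2\pi\mathbb Z$; since the period of $b$—the gcd of $\{k:M^k_{b,b}>0\}$—equals $1$ by aperiodicity, a divisibility argument forces $\phi\in2\pi\mathbb Z$, i.e. $\mu=\lambda$. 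Combined with $|\mu|\le\lambda$ this yields $|\lambda_i|<\lambda$ for every eigenvalue $\lambda_i\neq\lambda$. The only genuinely delicate point is this last stage: all earlier steps are soft consequences of nonnegativity and strong connectivity, whereas the exclusion of other eigenvalues of modulus $\lambda$ relies essentially on period $1$, here captured by the phase relation along edges and the gcd of cycle lengths.
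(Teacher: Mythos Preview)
Your proof is correct and follows the classical route to Perron--Frobenius. The paper, however, does not prove this theorem at all: it is stated in the appendix as a well-known result with a reference to \cite{K} (Kitchens), so there is no proof in the paper to compare against.

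One minor point worth tightening: your uniqueness argument in stage three is phrased for two \emph{positive} eigenvectors, whereas the statement asserts a unique eigenvector for $\lambda$ (i.e.\ geometric multiplicity one). The extension is immediate---for any real eigenvector $w$ of $\lambda$, set $s=\max_b w(b)/e(b)$ so that $s\ket e-w\ge 0$ has a zero entry and hence vanishes by your propagation argument; splitting a complex eigenvector into real and imaginary parts then gives one-dimensionality over $\mathbb C$. This is also recoverable from your phase argument in stage four specialized to $\mu=\lambda$ (so $\phi=0$ and $\theta$ is constant), but it is cleaner to say so explicitly.
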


	\begin{remark}
		When the matrix $M$ is irreducible but not aperiodic, Theorem \ref{thPFf} has to be modified and there exists still a maximal real eigenvalue $\lambda$ and a corresponding strictly positive eigenvector $\ket e$; however, for the other eigenvalues we do not have a strict inequality, but instead $|\lambda_i|\leq\lambda$. If the period of the irreducible chain is $p$ then there are exactly $p$ eigenvalues of maximal modulus $\lambda$. We refer to \cite{K} for more details.
	\end{remark}
	
	As a consequence of the Perron Frobenius Theorem we have the following generalized Doob transform.
	\begin{lemma}\label{stoclemma}
		Let $M$ be a non-negative irreducible and aperiodic matrix and let $\lambda$, $\ket e$ be the corresponding maximal eigenvalue and positive right eigenvector; then the matrix
		\begin{equation}\label{stocmat}
			S_{b,b'}=\frac 1\lambda e(b)^{-1} M_{b,b'}e(b')\, \qquad b,b'\in B\,,
		\end{equation}
		is a stochastic matrix and we have
		\begin{equation}\label{power}
			S^{k}_{b,b'}=\frac {1}{\lambda^k} e(b)^{-1}  \left(M^{k}\right)_{b,b'} e(b') \, \qquad  k\in \mathbb N,.
		\end{equation}
		If $M$ is irreducible and aperiodic, the matrix $S$ is also irreducible and aperiodic.
	\end{lemma}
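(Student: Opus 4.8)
The plan is to verify the three assertions by elementary computation, drawing from the Perron--Frobenius theorem (Theorem~\ref{thPFf}) only the strict positivity of the maximal right eigenvector $\ket e$ and the eigenrelation $\sum_{b'\in B}M_{b,b'}e(b')=\lambda\, e(b)$ for every $b\in B$; no further input is needed. First I would check stochasticity of $S$ in \eqref{stocmat}. Each entry $S_{b,b'}=\frac1\lambda e(b)^{-1}M_{b,b'}e(b')$ is non-negative because $M_{b,b'}\ge 0$, $\lambda>0$ and $e(b),e(b')>0$; and, fixing $b$ and summing over $b'$,
\[
\sum_{b'\in B} S_{b,b'}=\frac1\lambda\, e(b)^{-1}\sum_{b'\in B} M_{b,b'}e(b')=\frac1\lambda\, e(b)^{-1}\,\lambda\, e(b)=1,
\]
the middle equality being precisely $M\ket e=\lambda\ket e$. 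Hence $S$ is a stochastic matrix.

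Next, the power identity \eqref{power} is obtained by induction on $k$, the case $k=1$ being the definition \eqref{stocmat}. Assuming \eqref{power} for some $k\ge 1$, I would compute
\begin{align*}
S^{k+1}_{b,b'}&=\sum_{b''\in B} S^{k}_{b,b''}\,S_{b'',b'}
=\sum_{b''\in B}\frac{1}{\lambda^{k}}\,e(b)^{-1}(M^{k})_{b,b''}\,e(b'')\cdot\frac1\lambda\, e(b'')^{-1}M_{b'',b'}\,e(b')\\
&=\frac{1}{\lambda^{k+1}}\,e(b)^{-1}\Big(\sum_{b''\in B}(M^{k})_{b,b''}M_{b'',b'}\Big)e(b')
=\frac{1}{\lambda^{k+1}}\,e(b)^{-1}(M^{k+1})_{b,b'}\,e(b'),
\end{align*}
where the cancellation of the interior factors $e(b'')^{-1}e(b'')=1$ again uses the strict positivity of $\ket e$.

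Finally, for the spectral--graph properties I would note that, by positivity of $\ket e$ and $\lambda>0$, one has $S_{b,b'}>0$ if and only if $M_{b,b'}>0$; thus $S$ and $M$ induce the same directed graph $(B,E)$, so $S$ is strongly connected, i.e.\ irreducible, exactly when $M$ is. Likewise \eqref{power} gives $S^{k}_{b,b}>0$ iff $(M^{k})_{b,b}>0$ for every $b$ and every $k$, so $\{k:S^{k}_{b,b}>0\}$ coincides with $\{k:(M^k)_{b,b}>0\}$ and the two matrices have the same period at each state; in particular, if $M$ is irreducible and aperiodic then so is $S$.

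There is no genuine obstacle here: this is the classical Perron--Frobenius/Doob change of measure, and the only point deserving a word of care is the repeated use of the strict positivity of $\ket e$, which is exactly what makes the factors $e(b)^{-1}$ well defined and lets them telescope through the matrix products. In the finite setting the sums above are finite, so convergence is not an issue; the countable analogue invoked in the infinite case requires, in addition, that the eigenrelation $M\ket e=\lambda\ket e$ hold with absolutely convergent sums, which is there checked directly.
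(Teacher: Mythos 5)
Your proof is correct and follows essentially the same route as the paper: the row-sum computation via $M\ket e=\lambda\ket e$ is identical, your induction for \eqref{power} is just the elementwise version of the paper's one-line appeal to the conjugation form $S=\frac1\lambda E^{-1}ME$, and your graph/period argument merely fills in the details behind the paper's remark that irreducibility and aperiodicity ``follow directly.''
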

	\begin{proof}
		The proof of \eqref{stocmat} follows by a direct computation. We need just to check that the sum along each row is one
		$$
		\sum_{b'\in B} S_{b,b'}=\frac{1}{\lambda}  e(b)^{-1}\sum_{b'\in B} M_{b,b'} e(b')=\frac 1\lambda e(b)^{-1}\lambda e(b)=1\,.
		$$
		Formula \eqref{power} follows directly by the representation \eqref{matriceScompact}. Irreducibility and aperiodicity follow directly.
	\end{proof}
	Notice that in formula \eqref{power} the upper label $k$ of the matrices $S,M$ is an integer number, not an element of $A$, and it is denoting the $k$-th power of such matrices.

	\subsection{Other tools}
	A simple but powerful result that we use is the following.
	\begin{lemma}\label{lemmac}
		Suppose that we have two sequences of probability measures $\mu_N$ and $\mu_{N}'$ such that there exist two sequences of positive constants $k_N,K_N$ such that
		\begin{equation}
			0\leq \liminf_{N\to +\infty}\frac 1N\log k_N\leq \limsup_{N\to +\infty} \frac 1N\log K_N\leq 0\,, 
		\end{equation}
		with $k_N\mu_N'\leq \mu_N\leq K_N\mu_N'$. If $\mu_N'$ satisfies a large deviations principle then also $\mu_N$ satisfies a large deviations principle with the same rate.
	\end{lemma}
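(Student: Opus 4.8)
The plan is to verify directly the two halves of the definition of a large deviations principle for $\mu_N$---the upper bound on closed sets and the lower bound on open sets---by transporting the corresponding bounds for $\mu_N'$ through the sandwich $k_N\mu_N'\leq \mu_N\leq K_N\mu_N'$. I read this sandwich as a pointwise inequality of measures, i.e. $k_N\mu_N'(A)\leq \mu_N(A)\leq K_N\mu_N'(A)$ for every measurable $A$; since $\mu_N,\mu_N'$ are probability measures, evaluating on the whole space forces $k_N\leq 1\leq K_N$, so together with the hypothesis the two normalizing sequences are subexponential in the precise sense $\lim_N\frac1N\log k_N=\lim_N\frac1N\log K_N=0$. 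Let $I$ be the rate functional for which $\mu_N'$ satisfies the LDP with speed $N$; I claim the same $I$ works for $\mu_N$.

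For the upper bound, fix a closed set $C$. Applying the right-hand inequality to $C$ gives $\frac1N\log\mu_N(C)\leq \frac1N\log K_N+\frac1N\log\mu_N'(C)$ (valid in the extended reals, since $\mu_N'(C)=0$ forces $\mu_N(C)=0$). Taking $\limsup_{N\to\infty}$ and using subadditivity of $\limsup$ together with $\limsup_N\frac1N\log K_N\leq 0$ and the upper bound for $\mu_N'$, I obtain
\begin{equation*}
\limsup_{N\to\infty}\frac1N\log\mu_N(C)\leq \limsup_{N\to\infty}\frac1N\log K_N+\limsup_{N\to\infty}\frac1N\log\mu_N'(C)\leq -\inf_{x\in C}I(x)\,.
\end{equation*}

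For the lower bound, fix an open set $O$. The left-hand inequality gives $\frac1N\log\mu_N(O)\geq \frac1N\log k_N+\frac1N\log\mu_N'(O)$, and taking $\liminf_{N\to\infty}$, using superadditivity of $\liminf$ together with $\liminf_N\frac1N\log k_N\geq 0$ and the lower bound for $\mu_N'$, I get
\begin{equation*}
\liminf_{N\to\infty}\frac1N\log\mu_N(O)\geq \liminf_{N\to\infty}\frac1N\log k_N+\liminf_{N\to\infty}\frac1N\log\mu_N'(O)\geq -\inf_{x\in O}I(x)\,.
\end{equation*}
These two displays are exactly the LDP for $\mu_N$ with the same rate functional $I$ and speed $N$.

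There is no genuine obstacle here: the argument is completely elementary once the sandwich is interpreted setwise. The only points requiring care are bookkeeping ones---using subadditivity of $\limsup$ on the closed-set side and superadditivity of $\liminf$ on the open-set side, so that each error term $\frac1N\log K_N$ or $\frac1N\log k_N$ falls on the favourable side of its inequality---together with checking that the hypothesis on $k_N,K_N$ makes these error terms vanish in the limit. Since the limiting estimates are governed on both sides by $-\inf I$, the rate functional for $\mu_N$ is forced to coincide with that of $\mu_N'$, inheriting its lower semicontinuity (and goodness, if present).
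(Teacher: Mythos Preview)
Your proof is correct and follows exactly the approach the paper indicates: the paper's proof consists only of the sentence ``The proof follows from a direct verification of the validity of the lower and upper bounds,'' which is precisely what you have carried out.
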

	\begin{proof}
		The proof follows from a direct verification of the validity of the lower and upper bounds.
	\end{proof}
	
	\subsection{Eigenvalues and eigenvectors for a class of infinite matrices}\label{eigeninf}
	
	We consider the eigenvalue problem for the infinite-dimensional matrices, having rows and columns labeled by $\mathbb N_0$, of the form:  \newline
	\begin{equation}\label{infmat}
		A= \begin{bmatrix}
			\alpha & \beta_1 & 0 & \cdots & 0 & \cdots &\cdots \\ \beta_2 & \alpha &\beta_1 & 0&  \cdots & 0 &\cdots \\ 0 & \beta_2 &  \alpha & \beta_1 & 0 & \cdots \\ 0 & 0 & \beta_2 & \alpha & \beta_1 & 0 & \cdots \\ \vdots & \vdots  &0 & \ddots & \ddots & \ddots & \ddots  \\ \vdots & \vdots & \vdots & \ddots & \ddots &\ddots &\ddots \\ \end{bmatrix}	\end{equation} 
	that is
	\begin{equation*}
		A_{ij}= \begin{cases*} 
			\beta_1 \delta_{i,j+1} \\ \alpha \delta_{i,j} \ \ \ \ \ \ \ \ \ \  i,j=0,1,2,... \\ \beta_2 \delta_{i-1,j}
		\end{cases*}
	\end{equation*}
	with $\alpha, \beta_1, \beta_2 > 0$. The spectral theory in the case of infinite matrices is more complex and we illustrate just a few computations. We refer, for example, to \cite{K} for more details.
	
	The Perron value for the matrix \eqref{infmat} is defined as
	\begin{equation}\label{perron}
		\lim_{n \rightarrow +\infty} \sqrt[n]{\left(A^n\right)_{i,j}}\,,
	\end{equation}
	and the limit does not depend on $i,j$, \cite{K}. These matrices have a continuum of positive eigenvalues larger than the Perron value, for which the corresponding eigenvectors are positive. We have
	
	\begin{lemma}\label{perronlemma}
		The Perron value of the matrix \eqref{infmat} is given by $\alpha+2\sqrt{\beta_1\beta_2}$ and the corresponding column eigenvector is given by 
		$\ket e= \left((n+1) \left(\frac{\beta_2}{\beta_1} \right)^{\frac{n+1}{2}}\right)_{n\in \mathbb N_0}$.
	\end{lemma}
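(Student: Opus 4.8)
The plan is to establish the two halves separately: first verify directly that $\ket e$ is a (positive) right eigenvector of $A$ with eigenvalue $\lambda := \alpha + 2\sqrt{\beta_1\beta_2}$, and then show that this $\lambda$ is exactly the Perron value \eqref{perron}.

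For the eigenvector check I would introduce $\rho := \sqrt{\beta_2/\beta_1}$ and $s := \sqrt{\beta_1\beta_2}$, so that $e(n) = (n+1)\rho^{n+1}$ and the key identities $\beta_1\rho = \beta_2\rho^{-1} = s$ hold. Since each row of $A$ carries only the three entries $A_{i,i-1} = \beta_2$, $A_{i,i} = \alpha$, $A_{i,i+1} = \beta_1$ (the first absent when $i = 0$), the product $A\ket e$ is defined entrywise with no convergence issue. For $i \geq 1$ one computes $(A\ket e)(i) = \beta_2\, i\rho^{i} + \alpha(i+1)\rho^{i+1} + \beta_1(i+2)\rho^{i+2} = \rho^{i+1}\big[s i + \alpha(i+1) + s(i+2)\big] = \rho^{i+1}(i+1)(\alpha + 2s) = \lambda e(i)$, and for $i = 0$ one checks $\alpha e(0) + \beta_1 e(1) = \rho(\alpha + 2\beta_1\rho) = \lambda e(0)$. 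Positivity of $\ket e$ is clear, and since each $A^n$ is also banded, induction gives $A^n\ket e = \lambda^n\ket e$ for all $n$.

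The upper bound on the Perron value then follows immediately: from $A^n\ket e = \lambda^n\ket e$ and non-negativity of all entries, for fixed $i,j$ we get the termwise inequality $(A^n)_{ij}\,e(j) \leq \sum_k (A^n)_{ik} e(k) = \lambda^n e(i)$, hence $(A^n)_{ij} \leq \lambda^n e(i)/e(j)$ and $\limsup_n\big((A^n)_{ij}\big)^{1/n} \leq \lambda$, since $(e(i)/e(j))^{1/n}\to 1$. For the lower bound I would pass to the finite truncations: let $A^{(N)}$ be the top-left $N\times N$ block of $A$, which is non-negative, irreducible and aperiodic (positive diagonal), and note the monotonicity of weighted path sums $(A^{(N)})^n_{ij} \leq (A^n)_{ij}$ for $i,j < N$. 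Conjugating $A^{(N)}$ by $\mathrm{diag}(\rho^0,\dots,\rho^{N-1})$ turns it into the symmetric tridiagonal Toeplitz matrix with diagonal $\alpha$ and off-diagonal entries $s$, whose eigenvalues are the classical $\alpha + 2s\cos\!\big(\tfrac{k\pi}{N+1}\big)$, $k = 1,\dots,N$; conjugation preserves the spectrum, so the Perron eigenvalue of $A^{(N)}$ is $\lambda_N := \alpha + 2s\cos\!\big(\tfrac{\pi}{N+1}\big)$. By finite-dimensional Perron–Frobenius, $\big((A^{(N)})^n_{ij}\big)^{1/n}\to \lambda_N$ as $n\to\infty$, which combined with the monotonicity bound gives $\liminf_n\big((A^n)_{ij}\big)^{1/n} \geq \lambda_N$ for every $N$; letting $N\to\infty$ yields $\liminf_n\big((A^n)_{ij}\big)^{1/n} \geq \alpha + 2s = \lambda$. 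Together with the upper bound this shows the limit in \eqref{perron} exists, is independent of $i,j$, and equals $\alpha + 2\sqrt{\beta_1\beta_2}$, with $\ket e$ the asserted positive eigenvector.

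\textbf{Main obstacle.} The eigenvector verification and the upper bound are essentially one-line computations. The step requiring care is the lower bound — specifically, justifying the truncation/monotonicity comparison $(A^{(N)})^n_{ij}\le (A^n)_{ij}$ and invoking the explicit Toeplitz eigenvalue formula together with finite-dimensional Perron–Frobenius to transfer the asymptotics from the blocks to $A$.
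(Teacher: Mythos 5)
Your proof is correct, and it takes a genuinely different route from the one in the paper. Both arguments verify the eigenvector identity $A\ket e=\lambda\ket e$ by the same direct computation, but for the Perron value the paper works with the single matrix element $(A^{2n})_{0,0}$, writing it in closed form as a sum over bicoloured Motzkin/Dyck paths weighted by Catalan numbers and binomial coefficients, and extracting the exponential growth rate $\alpha+2\sqrt{\beta_1\beta_2}$ via Stirling asymptotics; the independence of the limit \eqref{perron} from $i,j$ is delegated to the reference \cite{K}. You instead sandwich the Perron value: the positive eigenvector yields $(A^n)_{ij}\le \lambda^n e(i)/e(j)$ and hence $\limsup_n\,((A^n)_{ij})^{1/n}\le\lambda$, while the lower bound comes from comparison with the $N\times N$ truncations, whose Perron roots are computed exactly as $\alpha+2\sqrt{\beta_1\beta_2}\cos\bigl(\tfrac{\pi}{N+1}\bigr)$ after a diagonal conjugation to a symmetric tridiagonal Toeplitz matrix, together with finite-dimensional Perron--Frobenius and the monotonicity $(A^{(N)})^n_{ij}\le (A^n)_{ij}$. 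What your route buys is self-containedness: it proves existence of the limit in \eqref{perron} and its independence of $i,j$ rather than citing it, and it avoids the combinatorial identity and asymptotic analysis entirely; what the paper's route buys is brevity (a one-formula computation) and a direct link to the Motzkin/Dyck path combinatorics that underlies the matrix product ansatz literature it invokes. All the steps you flag as delicate (the truncation monotonicity, the Toeplitz spectrum, the transfer of asymptotics from blocks to $A$) are standard and hold exactly as you describe, so there is no gap.
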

	\begin{proof}
		We give only an outline of the argument, since the computation is equivalent to the classic combinatorial computation of the partition function based on bicoloured Motzkin and Dyck paths \cite{review}.
		
		We consider a discrete time random walk on $\mathbb N_0$, that starts at zero and performs jumps of $0$ or $\pm 1$. We assign the weight $\beta_1$ to jumps of $+1$, the weight $\beta_2$ to jumps of $-1$ and the weight $\alpha$ to jumps of $0$. The weights of paths of even length that start and end at $0$ is given by
		\begin{align*}
			(A^{2n})_{0,0}= \sum_{k=0}^{n} \mathcal{C}_{n-k} \binom{2n}{2k} \alpha^{2k} \beta_1^{n-k} \beta_2^{n-k}
		\end{align*}
		where $\mathcal{C}_n$ is the $n$-th Catalan number. By Stirling formula and classic asymptotic estimates, we deduce the Perron value. The eigenvector can be verified by a direct computation.
	\end{proof}
	In the special case with $\alpha=2$, $\beta_1=\beta_2=1$ the Perron eigenvalue is $\lambda=4$ and the eigenvector associated is $\ket e=(n+1)_{n\in \mathbb N_0}$.

\end{document}